\numberwithin{equation}{section}
\newtheorem{theorem}{Theorem}[section]
\newtheorem{lemma}[theorem]{Lemma}
\newtheorem{proposition}[theorem]{Proposition}
\newtheorem{corollary}[theorem]{Corollary}
\newtheorem{remark}[theorem]{Remark}
\newtheorem{definition}[theorem]{Definition}
\newtheorem{hypothesis}[theorem]{Hypothesis}
\newcommand{\cA}{{\ensuremath{\mathcal A}} }
\newcommand{\cG}{{\ensuremath{\mathcal G}} }
\newcommand{\cT}{{\ensuremath{\mathcal T}} }
\renewcommand{\tilde}{\widetilde}          
\DeclareMathSymbol{\leqslant}{\mathalpha}{AMSa}{"36} 
\DeclareMathSymbol{\geqslant}{\mathalpha}{AMSa}{"3E} 
\DeclareMathSymbol{\eset}{\mathalpha}{AMSb}{"3F}     
\newcommand{\dd}{\text{\rm d}}             
\newcommand{\R}{\mathbb{R}}
\newcommand{\Z}{\mathbb{Z}}
\newcommand{\N}{\mathbb{N}}
\newcommand{\PEfont}{\mathrm}
\renewcommand{\P}{\ensuremath{\PEfont P}}
\newcommand{\E}{\ensuremath{\PEfont E}}
\DeclareMathOperator{\sign}{sign}
\newcommand{\ind}{{\sf 1}}
\renewcommand{\epsilon}{\varepsilon} 
\renewcommand{\theta}{\vartheta} 
\renewcommand{\rho}{\varrho} 
\renewcommand{\phi}{\varphi}
\newcommand\sfc{\mathsf{c}}
\newcommand\sff{\mathsf{f}}
\newcommand\sfC{\mathsf{C}}
\newenvironment{myenumerate}{%
\renewcommand{\theenumi}{\arabic{enumi}}%
\renewcommand{\labelenumi}{{\rm(\theenumi)}}%
\begin{list}{\labelenumi}
	{%
	\setlength{\itemsep}{0.4em}%
	\setlength{\topsep}{0.5em}%
	\setlength\leftmargin{2.45em}%
	\setlength\labelwidth{2.05em}%
	\setlength{\labelsep}{0.4em}%
	\usecounter{enumi}%
	}%
	}%
{\end{list}
}
\newenvironment{ienumerate}{%
\renewcommand{\theenumi}{\roman{enumi}}%
\renewcommand{\labelenumi}{{\rm(\theenumi)}}%
\begin{list}{\labelenumi}
	{%
	\setlength{\itemsep}{0.4em}%
	\setlength{\topsep}{0.5em}%
	\setlength\leftmargin{2.45em}%
	\setlength\labelwidth{2.05em}%
	\setlength{\labelsep}{0.4em}%
	\usecounter{enumi}%
	}%
	}%
{\end{list}
}
\newenvironment{aenumerate}{%
\renewcommand{\theenumi}{\alph{enumi}}%
\renewcommand{\labelenumi}{{\rm(\theenumi)}}%
\begin{list}{\labelenumi}
	{%
	\setlength{\itemsep}{0.4em}%
	\setlength{\topsep}{0.5em}%
	\setlength\leftmargin{2.45em}%
	\setlength\labelwidth{2.05em}%
	\setlength{\labelsep}{0.4em}%
	\usecounter{enumi}%
	}%
	}%
{\end{list}
}
\newenvironment{myitemize}{%
\begin{list}{$\bullet$}%
 	{%
	\setlength{\itemsep}{0.4em}%
	\setlength{\topsep}{0.5em}%
	\setlength\leftmargin{2.45em}%
	\setlength\labelwidth{2.05em}%
	\setlength{\labelsep}{0.4em}%
	}%
	}%
{\end{list}}
\renewenvironment{itemize}{
\begin{myitemize}}%
{\end{myitemize}}
\title{}
\date{\today}
\def\dd{\mathrm{d}}
\newcommand\CBS{\mathsf{C_{BS}}}
\newcommand\imp{\mathrm{imp}}
\newcommand\bkappa{\boldsymbol{\kappa}}
\title[Volatility smile in a multiscaling model]{The asymptotic smile\\
of a multiscaling stochastic volatility model}
\author{Francesco Caravenna}
\address{Dipartimento di Matematica e Applicazioni, 
Universit\`a degli Studi di Milano-Bicocca,
via Cozzi 55,
I-20125 Milano, Italy}
\email{francesco.caravenna@unimib.it}
\author{Jacopo Corbetta}
\address{\'{E}cole des Ponts - ParisTech,
CERMICS,
6 et 8 avenue Blaise Pascal,
77420 Champs sur Marne, 
France}
\email{jacopo.corbetta@enpc.fr}
\keywords{Implied Volatility, Option Price, Tail Probability, Stochastic Volatility Model,
Large Deviations, Multiscaling of Moments}
\subjclass[2010]{Primary: 60F10; Secondary: 91B25, 60G44}
\begin{document}

\MakeRobustCommand\subref

\begin{abstract}
We consider a stochastic volatility model which captures relevant stylized facts of financial series,
including the multi-scaling of moments.
The volatility evolves according to
a generalized Ornstein-Uhlenbeck processes
with super-linear mean reversion.

Using large deviations techniques, we determine the asymptotic shape of
the implied volatility surface in any regime of small maturity $t \to 0$
or extreme log-strike $|\kappa| \to \infty$ (with bounded maturity).
Even if the price has continuous paths,
out-of-the-money implied volatility diverges for small maturity,
producing a very pronounced smile.
\end{abstract}

\maketitle

\section{Introduction}

The evolution of the price $(S_t)_{t\ge 0}$ of an asset 
is often described by a stochastic
volatility model $\dd S_t = S_t (\mu \, \dd t + \sigma_t \, \dd B_t)$, where 
$(B_t)_{t\ge 0}$ is a standard Brownian motion
and $(\sigma_t)_{t\ge 0}$ is a stochastic process.
A popular choice for $(\sigma_t)_{t\ge 0}$ is a process
of Ornstein-Uhlenbeck type:
\begin{equation}\label{eq:OU}
	\dd \sigma^2_t = - c \, (\sigma^2_t)^\gamma \, \dd t + \dd L_t \,,
\end{equation}
where $(L_t)_{t\ge 0}$ is a subordinator (i.e.\ a non-decreasing L\'evy process)
and $c, \gamma \in (0,\infty)$ are parameters, the usual choice
being the case $\gamma = 1$ when the mean reversion is linear,
cf.~\cite{cf:BS}.
This class of models is rich enough to
reproduce many empirically observed stylized facts, 
including heavy tails in the distribution of $S_t$
and clustering of volatility.

Another remarkable stylized fact is the so-called \emph{multi-scaling of moments}
\cite{cf:D,cf:DAD,cf:GBPTD}. This refers to the fact that $\E[|S_{t+h}-S_t|^q] \approx h^{A(q)}$
as $h \to 0$, where the scaling exponent is diffusive
only up to a finite threshold, i.e.\ $A(q) = q/2$ for $q < q^*$, while for $q > q^*$ an anomalous
scaling $A(q) < q/2$ is observed. Interestingly, it was recently proved
in \cite{cf:DP} that a stochastic volatility model with $\sigma_t$ as in
\eqref{eq:OU} does not exhibit multi-scaling of moments 
in the linear case $\gamma=1$; however, \emph{multi-scaling of moments does occur
in the super-linear case $\gamma > 1$}, if
the L\'evy measure of $(L_t)_{t\ge 0}$
has a polynomial tail at infinity.

\smallskip

It is natural to ask how stochastic volatility models with $\sigma_t$ as in
\eqref{eq:OU} behave with respect to pricing, when $\gamma > 1$.
This is a non-trivial problem, because
the moment generating function of $S_t$ typically
admits no closed form outside the linear case $\gamma = 1$. 
However, there is a special limiting case 
which is analytically more tractable, defined as follows.

Consider a subordinator with
finite activity: 
$L_t = \sum_{k=1}^{N_t} J_k$, where $(N_t)_{t\ge 0}$ is a Poisson process 
and $(J_k)_{k\in\N}$ 
are i.i.d.\ non-negative random variables. 
In this case equation \eqref{eq:OU} can be solved pathwise, i.e.\
for any fixed realization of $(L_t)_{t\ge 0}$,
because between jump times of the Poisson process $(N_t)_{t\ge 0}$
it reduces to the ordinary differential equation 
\begin{equation}\label{eq:ode}
	\dd (\sigma^2_t) = -c \, (\sigma^2_t)^\gamma \, \dd t \,,
\end{equation}
which admits
explicit solutions.
The point is that, when $\gamma > 1$, one can \emph{let the jump size
diverge $J_k \to \infty$} and $(\sigma_t)_{t\ge 0}$ converges to
a well-defined limiting process, which explodes at the
jump times of the Poisson process and solves \eqref{eq:ode} between them
(see Figure~\ref{ch3:fig:IT}{\sc \subref{figura2}}).
For $\gamma > 2$, this limiting process $(\sigma_t)_{t\ge 0}$ has square-integrable paths
and can therefore be used to define a stochastic volatility model.

\smallskip

In this paper we focus on 
this stochastic volatility model, which was introduced in \cite{cf:ACDP}
(in a more direct way, see Section~\ref{ch3:sec:model})
and was shown to display several interesting features,
including multi-scaling of moments, clustering of volatility
and the crossover in the log-return distribution from power-law (small time) to Gaussian (large time).
We are interested in the price of European option and in the corresponding
implied volatility.

We stress that, 
besides its own interest, our model retains a close link with the general
class of stochastic volatility models
$\dd S_t = S_t (\mu \, \dd t + \sigma_t \, \dd B_t)$ 
with $\sigma_t$ as in \eqref{eq:OU} with $\gamma > 2$. 
For instance, 
option price and implied volatility of our model \emph{provide an upper bound
for all models in this class with a finite activity subordinator $(L_t)_{t\ge 0}$}
(see \S\ref{sec:OU}).

\smallskip

Our main results are sharp estimates for the tail decay of the log-return distribution
(Theorem~\ref{ch3:th:tailprob}), which yield
explicit asymptotic formulas for the price
of European options (Theorem~\ref{ch3:lemma:calltgamma}) and for the corresponding
\emph{implied volatility surface} (Theorem~\ref{ch3:th:main}).
Let us summarize some of the highlights,
referring to \S\ref{ch3:sec:discussion} for a more detailed discussion.
\begin{itemize}
\item We allow for any regime of either extreme log-strike $|\kappa| \to \infty$
(with arbitrary bounded maturity $t$, possibly varying with $\kappa$) or
small maturity $t \downarrow 0$ (with arbitrary log-strike $\kappa$, possibly varying with $t$).
This flexibility yields uniform estimates for
the implied volatility surface $\sigma_\imp(\kappa,t)$
in \emph{open regions} of the plane $(\kappa,t)$, cf.\ Corollary~\ref{ch3:th:surface}.

\item We show that
out-of-the-money implied volatility \emph{diverges} for small maturity, i.e.\
$\sigma_\imp(\kappa,t) \to \infty$ as
$t \downarrow 0$ for any $\kappa \ne 0$, while $\sigma_\imp(0,t) \to \sigma_0 < \infty$
(see Figure~\ref{ch3:fig:smile}).
This shows that stochastic volatility models
without jumps in the price can produce very steep skews for the small-time volatility smile,
cf.\ \cite[Chapter 5, ``Why jumps are needed'']{cf:Gat}.
What lies behind this phenomenon is the asymptotic emergence of heavy tails
in the small-time distribution of the volatility.
Interestingly, the same mechanism is responsible for the multi-scaling of moments.

\item We obtain the asymptotic expression
$\sigma_\imp(\kappa,t) \sim f(\kappa/t)$, for an explicit function
$f(\cdot)$ of \emph{just the ratio $(\kappa/t)$}, in a variety of interesting regimes
(including $t \downarrow 0$ for fixed $\kappa \ne 0$,
and $|\kappa| \to \infty$ for fixed $t > 0$).
In \S\ref{ch3:sec:discussion}
we provide a heuristic explanation for this phenomenon,
which is shared by different models
without moment explosion.
\end{itemize}

The moment generating function of our model admits no closed formula,
but is still manageable enough to derive sharp tail estimates, cf.\ Theorem~\ref{ch3:th:tailprob}.
These are based on large deviations bounds for suitable
functionals of a Poisson process, which might be of independent interest
(see Corollary~\ref{ch3:coro:LDP I} and
Remark~\ref{ch3:rem:LD}).
From these estimates, we derive asymptotic formulas for option price and implied volatility
using the general approach in \cite{cf:GL} and \cite{cf:CC},
that we summarize in \S\ref{ch3:sec:reminders} and \S\ref{ch3:sec:reminders2}.

\smallskip

The paper is organized as follows.
\begin{itemize}
\item In Section~\ref{ch3:sec:model} we define the model and we set up some notation.

\item In Sections~\ref{ch3:sec:impvol} and~\ref{ch3:sec:pricetail} 
we present our main asymptotic results on implied volatility, option price and tail probability,
with a general discussion in \S\ref{ch3:sec:discussion}.

\item In Section~\ref{ch3:sec:ld} we prove some key moment estimates,
which are the cornerstone of our approach, together with some large deviations results
for the Poisson process.

\item The following sections \ref{ch3:sec:LDP}, \ref{ch3:sec:lemma:calltgamma} 
and~\ref{ch3:sec:th:main} contain the proof of our main results concerning
tail probability, option price and implied volatility, respectively.

\item Finally, some technical results have been deferred to the Appendix~\ref{ch3:sec:app}.
\end{itemize}

\section{The model}
\label{ch3:sec:model}

In \S\ref{ch3:sec:hist} we recall the definition of the process $(Y_t)_{t\ge 0}$, introduced
in \cite{cf:ACDP}, for the de-trended log-price of a financial asset
under the historical measure. In \S\ref{ch:sec:mart} we describe its evolution under 
the risk-neutral measure (switching notation to $(X_t)_{t\ge 0}$ for clarity)
and in \S\ref{ch:sec:opiv} we define the price of a call option and the related implied volatility.

\subsection{The historical measure}
\label{ch3:sec:hist}

We fix four real
parameters $0 < D < \frac{1}{2}$, $V > 0$, $\lambda > 0$ and $\tau_0 < 0$,
whose meaning is discussed in a moment.
We consider a stochastic volatility model $(Y_t)_{t\ge 0}$,
with $Y_0 := 0$, defined by
\begin{equation} \label{ch3:eq:Ystart}
     \dd Y_t = \sigma_t \, \dd B_t \,,
\end{equation}
where $(B_t)_{t\ge 0}$ is a Brownian motion and
$(\sigma_t)_{t\ge 0}$ is an \emph{independent} process,
built as follows: denoting by $(N_t)_{t\ge 0}$ a Poisson process of intensity 
$\lambda$ (independent of $(B_t)_{t\ge 0}$)
with jump times $0 < \tau_1 < \tau_2 < \ldots$, we set
\begin{equation} \label{ch3:eq:sigmat}
	\sigma_t := \sfc
	\, \frac{\sqrt{2D}}{(t-\tau_{N_t})^{\frac{1}{2}-D}} \,, \qquad \text{where} \qquad
	\sfc := \frac{\lambda^{D-\frac{1}{2}} \, V}{\sqrt{\Gamma(2D+1)}} \,,
\end{equation}
and $\Gamma(\alpha) := \int_0^\infty x^{\alpha-1} e^{-x} \dd x$ is Euler's gamma function.
Note that $\tau_{N_t} = \max\{\tau_k: \ \tau_k \le t\}$ 
is the last jump time of the Poisson process
before $t$, hence the volatility $\sigma_t$ \emph{diverges} at the jump
times of the Poisson process, which can be thought as shocks in the market.
We refer to Figure~\ref{ch3:fig:IT} for a graphical representation.

\begin{figure}[t]
 \centering
\subfloat[][\emph{Volatility 
$(\sigma_t)_{t\ge 0}$}]{\includegraphics[width=.45\columnwidth]{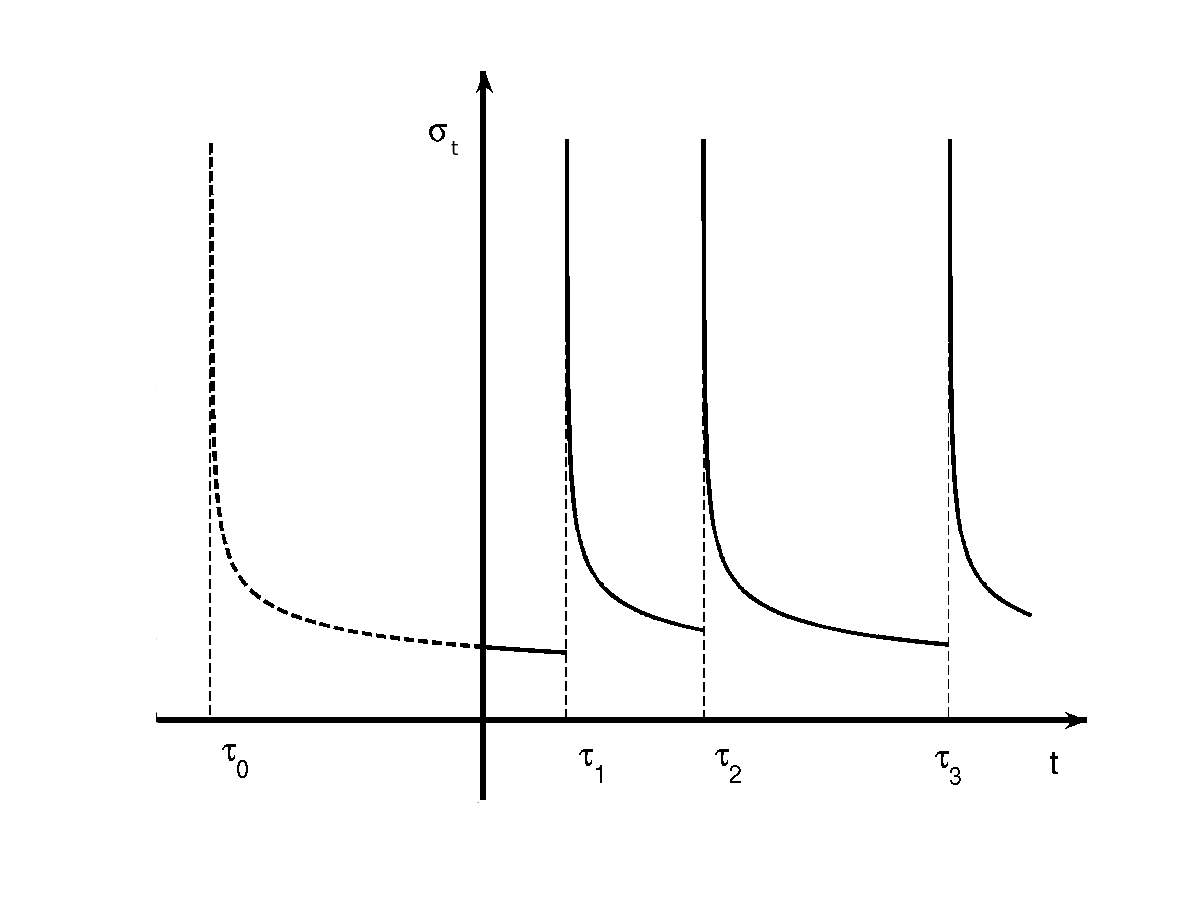}\label{figura2}}
\subfloat[][\emph{Time change $(I_t)_{t\ge 0}$}]{\includegraphics[width=.45\columnwidth]{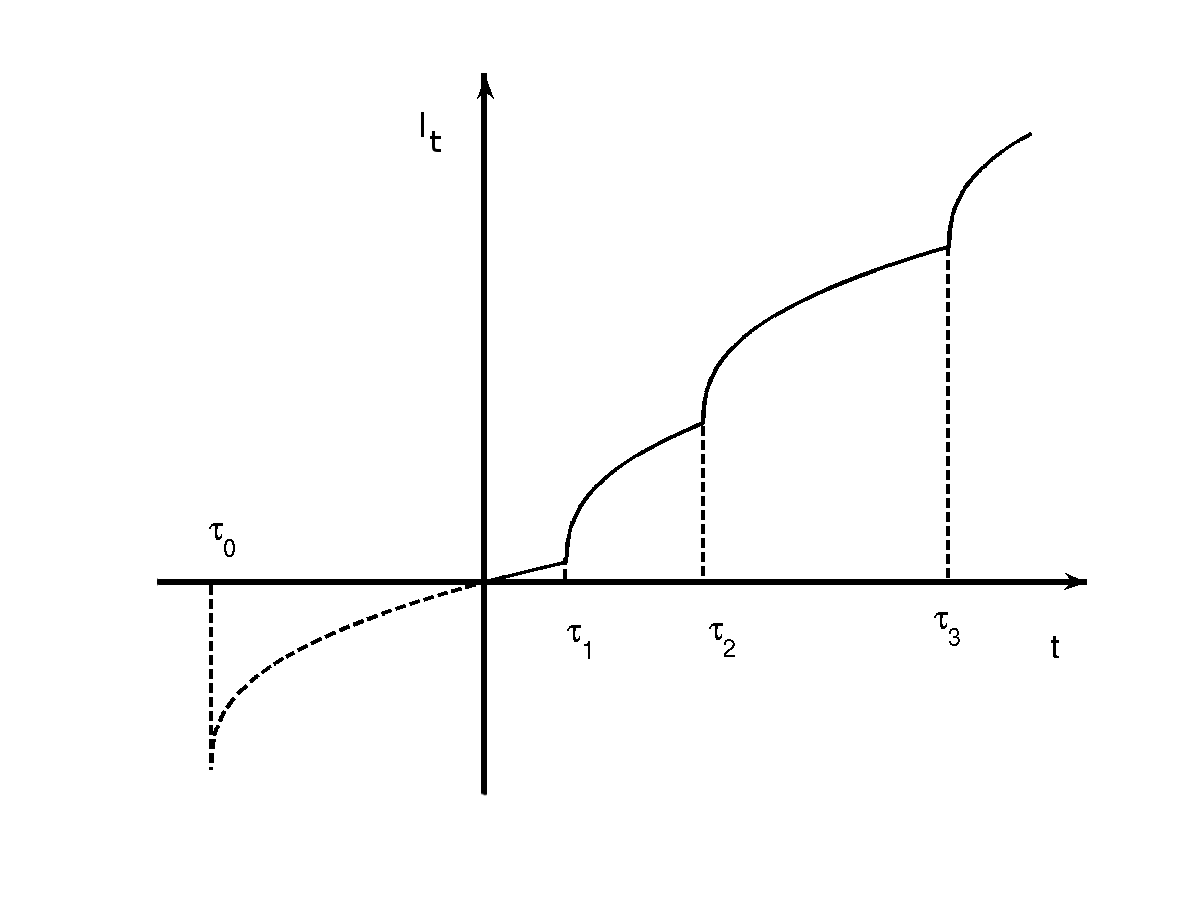}}
\caption{Paths of the \emph{time change} 
and of the \emph{spot volatility process}}
\label{ch3:fig:IT}
\end{figure}

We can now describe the meaning of the parameters:

\begin{itemize}
\item $\lambda \in (0,\infty)$ represents the average frequency of shocks;
\item $D \in (0,\frac{1}{2})$ tunes the decay exponent of the volatility after
a shock;
\item $V \in (0,\infty)$ represents the \emph{large-time volatility},\footnote{The
constant $\sfc$ in \eqref{ch3:eq:sigmat}
was called $\sigma$ in \cite{cf:ACDP} and
used as a parameter in place of $V$
(note that $\sfc$ and $V$ are proportional). 
Our preference for $V$ is due to its direct meaning as large-time
volatility, by \eqref{ch3:eq:V}.}
because (see Appendix~\ref{sec:avvol})
\begin{equation} \label{ch3:eq:V}
	V = \lim_{t\to\infty}\sqrt{\E[\sigma_t^2]} \,;
\end{equation}

\item $\tau_0 \in (-\infty,0)$
tunes the \emph{initial volatility} $\sigma_0$, since
\begin{equation} \label{ch3:eq:sigma0}
	\sigma_0 = \sfc \, \sqrt{2D} \, (-\tau_0)^{D-\frac{1}{2}}
	= \frac{\lambda^{D-\frac{1}{2}} \, V}{\sqrt{\Gamma(2D)}} \,
	(-\tau_0)^{D-\frac{1}{2}}  \,.
\end{equation}
Given this correspondence,
\emph{one can use $\sigma_0$ as a parameter
instead of $\tau_0$}.\footnote{We point out that in \cite{cf:ACDP}
the parameter $-\tau_0$ was chosen randomly, as an independent $Exp(\lambda)$ random variable
(just like $\tau_1$, $\tau_2-\tau_1$, $\tau_3-\tau_2$, \ldots).
With this choice, the process $(t-\tau_{N_t})_{t\ge 0}$ becomes \emph{stationary}
(with $Exp(\lambda)$ one-time marginal distributions), hence
the volatility $(\sigma_t)_{t\ge 0}$ is a stationary process too, by \eqref{ch3:eq:sigmat}.
In our context, it is more natural to have a fixed value for the initial
volatility.}
\end{itemize}

As discussed in \cite{cf:ACDP},
the process $(Y_t)_{t\ge 0}$ in \eqref{ch3:eq:Ystart}
can be represented as a \emph{time-changed Brownian motion}: more precisely,
\begin{equation} \label{ch3:eq:model0}
	Y_t := W_{I_t} \,, \qquad \text{with} \qquad
       I_t := \int_0^t \sigma_s^2 \, \dd s \,,
\end{equation}
where $(W_t)_{t\ge 0}$ is another Brownian motion, independent of $(I_t)_{t\ge 0}$.
It follows by \eqref{ch3:eq:sigmat} that
for $t \in [\tau_{k}, \tau_{k+1}]$ one has $I_t - I_{\tau_k} =
\sfc^2 (t-\tau_k)^{2D}$, cf.\ \eqref{ch3:eq:sigmat}, hence
\begin{equation} \label{ch3:eq:It}
\begin{split}
	I_t & := \sfc^2
	\Bigg\{ (t-\tau_{N_t})^{2D} - (-\tau_0)^{2D} +
	\sum_{k=1}^{N_t} (\tau_k - \tau_{k-1})^{2D} \Bigg\} \,,
\end{split}
\end{equation}
with the convention that the sum
in \eqref{ch3:eq:It} is zero when $N_t = 0$
(see Figure~\ref{ch3:fig:IT}).

\begin{remark}\label{ch3:rem:BS}\rm
In the limiting case $D = \frac{1}{2}$ one has $\sigma_t = V$ and $I_t = V^2 t$,
hence our model reduces to Brownian motion with constant volatility:
$Y_t = V B_t = W_{V^2 t}$. We exclude this case from our analysis just
because it has to be treated separately in the proofs.
\end{remark}

\subsection{The risk-neutral measure}
\label{ch:sec:mart}

We are going to consider a natural risk-neutral measure, 
under which the price $(S_t)_{t\ge 0}$
evolves according to the stochastic differential equation
\begin{equation} \label{ch3:sdeprice}
	\frac{\dd S_t}{S_t} = \sigma_t \, \dd B_t \,,
\end{equation}
where $\sigma_t$ is the process defined in \eqref{ch3:eq:sigmat}.
As a matter of fact, 
there is a one-parameter class of equivalent martingale measures
which allow to modify the value of the parameter $\lambda \in (0,\infty)$
freely (see Appendix~\ref{ch3:sec:martingale}). 
Here we assume to have fixed that parameter, and still call it $\lambda$.

Let us denote by $(X_t)_{t\ge 0}$ the \emph{log-price} process under the risk-neutral measure:
\begin{equation*}
	X_t := \log S_t \,,
\end{equation*}
with $X_0 = 0$, i.e.\ $S_0 = 1$.
It follows by \eqref{ch3:sdeprice} that
$\dd X_t = \sigma_t \, \dd B_t - \frac{1}{2} \sigma_t^2 \, \dd t$, hence
\begin{equation} \label{ch3:equivX}
	X_t = W_{I_t} - \frac{1}{2} I_t \,,
\end{equation}
where the process $(I_t)_{t\ge 0}$,
cf.\ \eqref{ch3:eq:model0}-\eqref{ch3:eq:It},
is independent of the Brownian motion $(W_t)_{t\ge 0}$.
As a consequence, the price $(S_t)_{t\ge 0}$ is
a \emph{time-changed geometric Brownian motion}:
\begin{equation} \label{ch3:eq:St}
	S_t = e^{X_t} = e^{W_{I_t} - \frac{1}{2}I_t} \,.
\end{equation}
Representations \eqref{ch3:equivX}, \eqref{ch3:eq:St} 
are so useful that we can take them as definitions
of our model.

\begin{definition}\label{def:def}
The log-price $(X_t)_{t\ge 0}$ and price $(S_t)_{t\ge 0}$ processes,
under the risk-neutral measure,
evolve according to \eqref{ch3:equivX} and \eqref{ch3:eq:St}
respectively,
with $(I_t)_{t\ge 0}$ defined in \eqref{ch3:eq:It}.
\end{definition}

\subsection{Option price and implied volatility}
\label{ch:sec:opiv}

The price of a (normalized) European call option, with log-strike $\kappa \in \R$
and maturity $t \ge 0$, under our model is
\begin{equation} \label{ch3:c}
	c(\kappa,t) := \E[(S_t - e^\kappa)^+] = \E[(e^{X_t}-e^{\kappa})^+] \,.
\end{equation}

We recall that, for a given volatility parameter $\sigma \in (0,\infty)$, 
the Black\&Scholes price of a European call option
equals $\CBS(\kappa,\sigma \sqrt{t})$, where
\begin{equation} \label{ch3:eq:BS1}
	\CBS(\kappa,v) 
	:= \E [ (e^{W_{v^2} - \frac{1}{2} v^2} - e^{\kappa})^+ ]
	= \begin{cases}
	(1-e^\kappa)^+ & \text{if } v = 0 \,, \\
	\rule{0pt}{1.2em}\Phi(d_1) - e^\kappa \Phi(d_2)  & \text{if } v > 0 \,,
	\end{cases}
\end{equation}
where
\begin{equation} \label{ch3:eq:BS2}
	\Phi(x) := \int_{-\infty}^x \frac{e^{-\frac{1}{2} t^2}}{\sqrt{2\pi}} \, \dd t \,, \qquad
	d_1 := - \frac{\kappa}{v} + \frac{v}{2} \,, \qquad
	d_2 := - \frac{\kappa}{v} - \frac{v}{2} \,.
\end{equation}
Since $\Phi(-x) = 1-\Phi(x)$, the following symmetry relation holds:
\begin{equation}\label{ch3:eq:symBS}
	\CBS(-\kappa,v) = 1-e^{-\kappa} + e^{-\kappa} \CBS(\kappa,v) \,.
\end{equation}

\begin{definition}\label{ch3:def:iv}
For $t > 0$ and $\kappa\in\R$,
the \emph{implied volatility} $\sigma_{\rm imp}(\kappa,t)$
of our model is
the unique value of $\sigma\in (0,\infty)$ such that $c(\kappa,t)$ in \eqref{ch3:c} equals
$\CBS(\kappa, \sigma\sqrt{t})$, that is
\begin{equation} \label{ch3:eq:IV}
	c(\kappa,t) = \CBS(\kappa, \sigma_{\rm imp}(\kappa,t) \sqrt{t}) \,.
\end{equation}
\end{definition}

Recalling \eqref{ch3:eq:St}, since $(I_t)_{t\ge 0}$ is independent of $(W_t)_{t\ge 0}$, the call price
$c(\kappa,t)$ in \eqref{ch3:c} enjoys the representation
\begin{equation} \label{ch3:HW}
	c(\kappa,t) = \E \big[ \CBS(\kappa, v)
	\big|_{v = \sqrt{I_t}} \big] \,,
\end{equation}
known as \emph{Hull-White formula} \cite{cf:HW}.
As a consequence, the symmetry relation \eqref{ch3:eq:symBS} transfers from Black\&Scholes to our model:
\begin{equation} \label{ch3:eq:asd}
	c(-\kappa,t) = 1-e^{-\kappa}  + e^{-\kappa} c(\kappa,t) \,.
\end{equation}
Looking at \eqref{ch3:eq:IV}, it follows that \emph{the implied volatility
of our model is symmetric in $\kappa$}:
\begin{equation} \label{ch3:eq:sym}
	\sigma_\imp(-\kappa,t) = \sigma_\imp(\kappa,t) \,.
\end{equation}
As a consequence, in the sequel we focus on the regime $\kappa \ge 0$.

\begin{remark}\rm\label{ch3:rem:shared}
Properties \eqref{ch3:HW}-\eqref{ch3:eq:asd}-\eqref{ch3:eq:sym}
hold for \emph{any} stochastic volatility model \eqref{ch3:sdeprice}
for which the volatility $(\sigma_t)_{t\ge 0}$ is independent of the Brownian motion
$(B_t)_{t\ge 0}$, because any such model enjoys the representation \eqref{ch3:eq:St},
with $(I_t)_{t\ge 0}$ defined as in \eqref{ch3:eq:model0}
(cf.\ \cite{cf:RT}).
\end{remark}

\section{Main results: implied volatility}
\label{ch3:sec:impvol}

In this section we present our main results on
the asymptotic behavior of the implied volatility $\sigma_\imp(\kappa,t)$
of our model. We allow for a variety of regimes with bounded maturity.
More precisely, we consider an \emph{arbitrary family of values of $(\kappa,t)$} such that
\begin{equation}\label{ch3:eq:assfamily}
	\text{either $t \to \bar t \in (0,\infty)$ and $\kappa \to \infty$\,,
	\quad \
	or $t \to 0$ with arbitrary $\kappa \ge 0$} \,.
\end{equation}
Allowing for both sequences $((\kappa_n,t_n))_{n\in\N}$
and functions $((\kappa_s,t_s))_{s\in [0,\infty)}$, we omit subscripts.

We agree with the conventions $\N := \{1,2,3,\ldots\}$ and $\N_0 := \N \cup \{0\}$.
We are going to use the following asymptotic notations, for positive functions $f,g$:
\begin{align}\label{ch3:eq:notation}
	f \sim g, \ \  f \ll g, \ \ f \gg g \qquad &\iff \qquad
	\frac{f}{g} \to 1, \ \ \frac{f}{g} \to 0, \ \ \frac{f}{g} \to \infty 
	\quad \ \text{respectively}, \\
	\label{ch3:eq:notation2}
	f \asymp g \qquad &\iff \qquad \log f \sim \log g \,, 
	\quad \ \text{i.e.} \quad \
	\frac{\log f}{\log g} \to 1 \,.
\end{align}

\subsection{Auxiliary functions}
We introduce two functions $\bkappa_1, \bkappa_2 : (0,1) \to (0,\infty)$ by
\begin{equation} \label{ch3:eq:gh}
	\bkappa_1(t) := \sqrt{t} \sqrt{\log \tfrac{1}{t}} \,, \qquad
	\bkappa_2(t) := t^D \sqrt{\log \tfrac{1}{t}} \,,
\end{equation}
which will act as boundaries for $\kappa$, separating
different asymptotic regimes as $t \to 0$.
Recall that $D$ determines the decay exponent of the volatility after a shock 
(cf. \eqref{ch3:eq:sigmat}), and
note that $\bkappa_1(t) < \bkappa_2(t)$, because $D < \frac{1}{2}$ by assumption.
 
\begin{remark}\rm
We point out that $\bkappa_1(t)$ is the same scaling considered 
by Mijatovi\'{c} and Tankov in the paper \cite{cf:MT}.
For exponential L\'evy models with
jumps, they show that
\begin{equation*}
	\text{for } a \in (0,\infty): \qquad
	\lim_{t\to 0} \, \sigma_\imp^{\text{\cite{cf:MT}}}\big( a\, \bkappa_1(t), t \big) =
	\max\bigg\{\sigma, \frac{a}{\sqrt{1-(\alpha_+-1)^+}} \bigg\} \,,
\end{equation*}
where $\sigma$ is the volatility of the Brownian component and $\alpha_+$
is the activity index of the positive jumps of the L\'evy process 
(see equation (1.1) in \cite{cf:MT}).
Our model is quite different ---it has jump in the volatility, not in the price---
but, remarkably, it exhibits similar features: by formulas \eqref{ch3:eq:smile1} 
and \eqref{ch3:eq:smile0} below
\begin{equation*}
	\text{for } a \in (0,\infty): \qquad
	\lim_{t\to 0} \, \sigma_\imp\big( a\, \bkappa_1(t), t \big) =
	\max\bigg\{\sigma_0, \frac{a}{\sqrt{2D+1}} \bigg\} \,.
\end{equation*}

Note that $\kappa \sim a \, \bkappa_1(t)$ is the regime when the tail probability
of Brownian motion is polynomial in $t$, since
$\P(W_t > \kappa) \asymp \exp(-\kappa^2/(2t)) = t^{a^2/2}$.
The probability that a Poisson process has $k$ jumps in the interval
$[0,t]$ is also polynomial in $t$, namely $O(t^k)$. These
considerations suggest that $\kappa \sim a \, \bkappa_1(t)$ is a
``transition regime'',
when the Brownian component and the jumps 
(in the price for \cite{cf:MT}, in the volatility for us)
have comparable effects.
\end{remark}

We also define an auxiliary function $\sff: (0,\infty) \to \R$ by
\begin{equation}\label{ch3:eq:f}
	\sff(a) := \min_{m \in \N_0} \sff_m(a) \,, \qquad
	\text{with} \qquad
	\sff_m(a) := m + \frac{a^2}{2\, m^{1-2D}}  \,,
\end{equation}
We point out that the minimization
can be performed explicitly (see Appendix~\ref{ch3:sec:minimum}).
In particular, the function $\sff$ 
is continuous, strictly increasing and satisfies
\begin{equation}\label{ch3:eq:asf}
	\sff(a) \sim \begin{cases}
	\displaystyle 1 + \frac{ a^2}{2}
	& \text{as } a \downarrow 0 \\
	\rule[-0.8em]{0pt}{2.9em}\displaystyle
	\Big((1-D)^{\frac{1/2-D}{1-D}}
	\, \sfC\Big) \, 
	a^{\frac{1}{1-D}}
	& \text{as } a \uparrow \infty
	\end{cases} , \qquad \text{with} \quad \ 
	\sfC := 
	\frac{(1-D)^{\frac{1/2}{1-D}}}
	{(\frac{1}{2}-D)^{\frac{1/2-D}{1-D}}} \,.
\end{equation}


\subsection{Implied volatility}

The next theorem, proved in Section~\ref{ch3:sec:th:main}, is our main result.
It provides a \emph{complete asymptotic picture} of the implied volatility
in any regime \eqref{ch3:eq:assfamily} of small maturity and/or large strike
(see Figures~\ref{ch3:fig:smile} and~\ref{fig:comp}).
The corresponding asymptotic results for the tail probability
$\P(X_t > \kappa)$ and for the option price $c(\kappa,t)$
are presented in Section~\ref{ch3:sec:pricetail}.

\begin{theorem}[Implied volatility]\label{ch3:th:main}
The implied volatility $\sigma_\imp(\kappa,t)$ diverges
in the small maturity regime $t \to 0$ as soon as $\kappa \gg 
\bkappa_1(\sigma_0^2 \, t)$ 
(in particular: for any fixed $\kappa \ne 0$, i.e.\ out of the money). 
More precisely, consider a family of values of $(\kappa,t)$ with $\kappa \ge 0$, $t > 0$. 
We recall that $\sff(\cdot)$
is defined in \eqref{ch3:eq:f} and $\sfc$, $\sigma_0$, $\sfC$ are defined in 
\eqref{ch3:eq:sigmat},
\eqref{ch3:eq:sigma0}, \eqref{ch3:eq:asf}.

\begin{aenumerate}
\item\label{iv:a} If $\,t \to \bar t \in (0,\infty)$ and $\,\kappa \to \infty$, 
or if $\,t \to 0$ and $\kappa \gg \bkappa_2(\sfc^{1/D} \, t)$ (e.g.,
$\kappa \to \bar\kappa \in (0,\infty]$),
\begin{equation}\label{ch3:eq:smile}
\begin{split}
\sigma_\imp(\kappa,t) & \sim 
\sqrt{\frac{\sfc^{1/D}}{2 \sfC}} 
	\left(\frac{\rule[-1em]{0pt}{1em}\displaystyle\frac{\kappa}{\sfc^{1/D} \, t}}
	{\displaystyle\sqrt{\log \frac{\kappa}{\sfc^{1/D} \, t}}}\right)^{\frac{1/2-D}{1-D}} 
	\,.
\end{split}
\end{equation}

\item\label{iv:b} If $\,t\to 0$ and 
$\, \kappa \sim a \, \bkappa_2(\sfc^{1/D} \, t)$, for some $a \in (0,\infty)$,
\begin{equation}\label{ch3:eq:smile2}
	\rule{0pt}{1.3em}
	\sigma_\imp(\kappa,t)\sim 
	\frac{\sqrt{\lambda}}{\sqrt{2\, 
	\sff\big( \rho_t \, a \big)}} 
	\,
	\frac{\kappa}{\bkappa_1(\lambda \, t)} \,, \qquad
	\text{where} \quad 
	\rho_t := \sqrt{\frac{\log (\sfc^{1/D} \, t)}{\log (\lambda \, t)} } \,.
\end{equation}
(One could actually replace $\sff(\rho_t \, a)$ by $\sff(a)$,
because $\lim_{t\to 0} \rho_t = 1$,
but keeping $\rho_t$ gives better approximations of the true
implied volatility, when $\sfc^{1/D}$ and $\lambda$ are different.)
 
\item\label{iv:c} If $\,t \to 0$ and 
$\sqrt{2D+1} \, \bkappa_1( \sigma_0^2 \, t) \leq 
\kappa \ll \bkappa_2( \sfc^{1/D} \, t)$, 
\begin{equation}\label{ch3:eq:smile1}
   \rule{0pt}{1.3em}
	\sigma_\imp(\kappa,t)\sim 
	\frac{ \sqrt{\lambda}}{\sqrt{2
	\big(1-  \xi_t(\kappa) \big) }}  \,
  \frac{\kappa}{\bkappa_1( \lambda\, t)} \,, \qquad
  	\text{where} \quad 
  \xi_t(\kappa) := \frac{\log
	\left( \kappa / \bkappa_2(\sfc^{1/D}\,t) \right)}{\log
	(\lambda\,  t)} \,,
\end{equation}
and note that $\xi_t(\kappa) \in [0, \frac{1}{2}-D]$
for $\kappa$ in the range under consideration.

\item\label{iv:d} Finally, if $\,t \to 0$ and 
$\,0 \leq \kappa \leq 
\sqrt{2D+1} \, \bkappa_1( \sigma_0^2 \, t)$,
\begin{equation}\label{ch3:eq:smile0}
	\sigma_\imp(\kappa,t) \sim \sigma_0 \,.
\end{equation}

\end{aenumerate}
\end{theorem}

Let us give a qualitative description of Theorem~\ref{ch3:th:main}.
Recall \eqref{ch3:eq:notation2}, \eqref{ch3:eq:gh} and
note that $\bkappa_1(t) \asymp \sqrt{t}$ and $\bkappa_2(t) \asymp t^D$.
If we fix $t > 0$ small and increase $\kappa \in [0,\infty)$,
we can describe the implied volatility $\sigma_\imp(\kappa,t)$ as follows
(cf.\ Figure~\ref{ch3:fig:smile}):
\begin{itemize}
\item $\sigma_\imp(\kappa,t)  \sim \sigma_0$ is roughly constant 
 from $\kappa = 0$ up to $\kappa \asymp \sqrt{t}$, cf.\ \eqref{ch3:eq:smile0};

\item then $\sigma_\imp(\kappa,t) \asymp \kappa / \sqrt{t}$ grows linearly
from $\kappa \asymp \sqrt{t}$
up to $\kappa \asymp t^D$, cf.\ \eqref{ch3:eq:smile1};

\item then $\sigma_\imp(\kappa,t)  \asymp (\kappa/t)^\gamma$ grows sublinearly
from $\kappa \asymp t^D$ to $\kappa = \infty$, cf.\ \eqref{ch3:eq:smile}, 
with an exponent 
$\gamma = \frac{1/2-D}{1-D}$ that can take any value
in $(0,\frac{1}{2})$ depending on $D$.
\end{itemize}
We stress that formula $\sigma_\imp(\kappa,t) \asymp (\kappa/t)^\gamma$ holds
also as $t \downarrow 0$ for fixed $\kappa > 0$. 

\begin{figure}[t]
\centering
\includegraphics[width=.75\columnwidth]{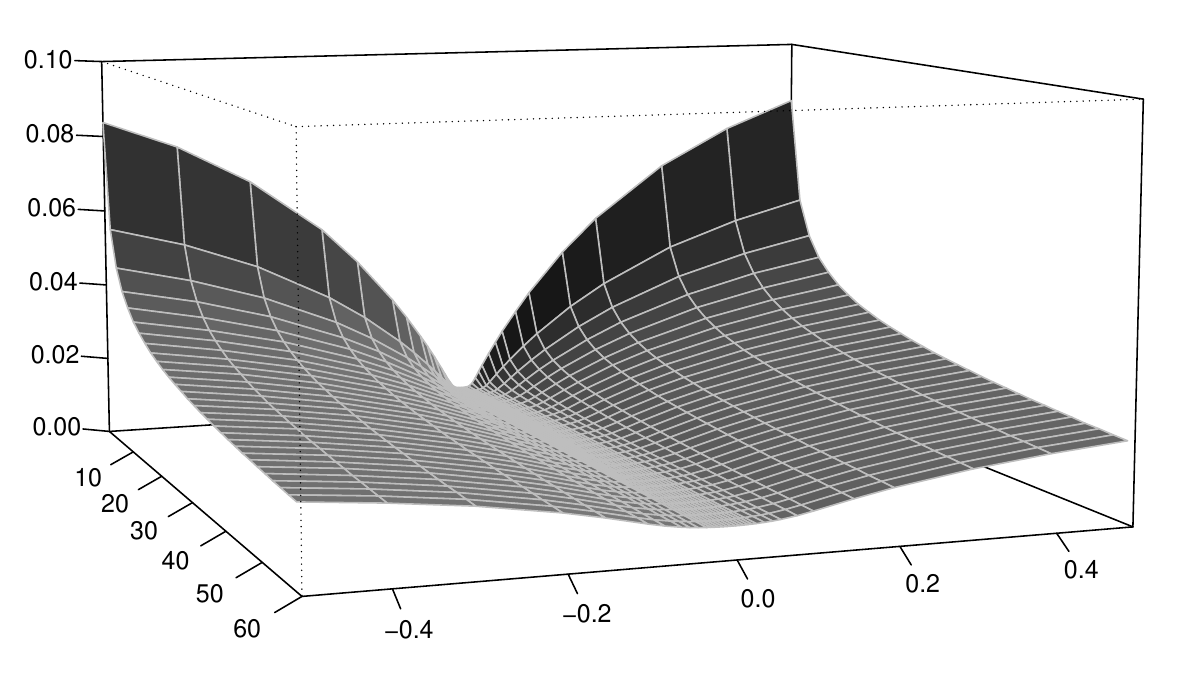}\\
\includegraphics[width=.75\columnwidth]{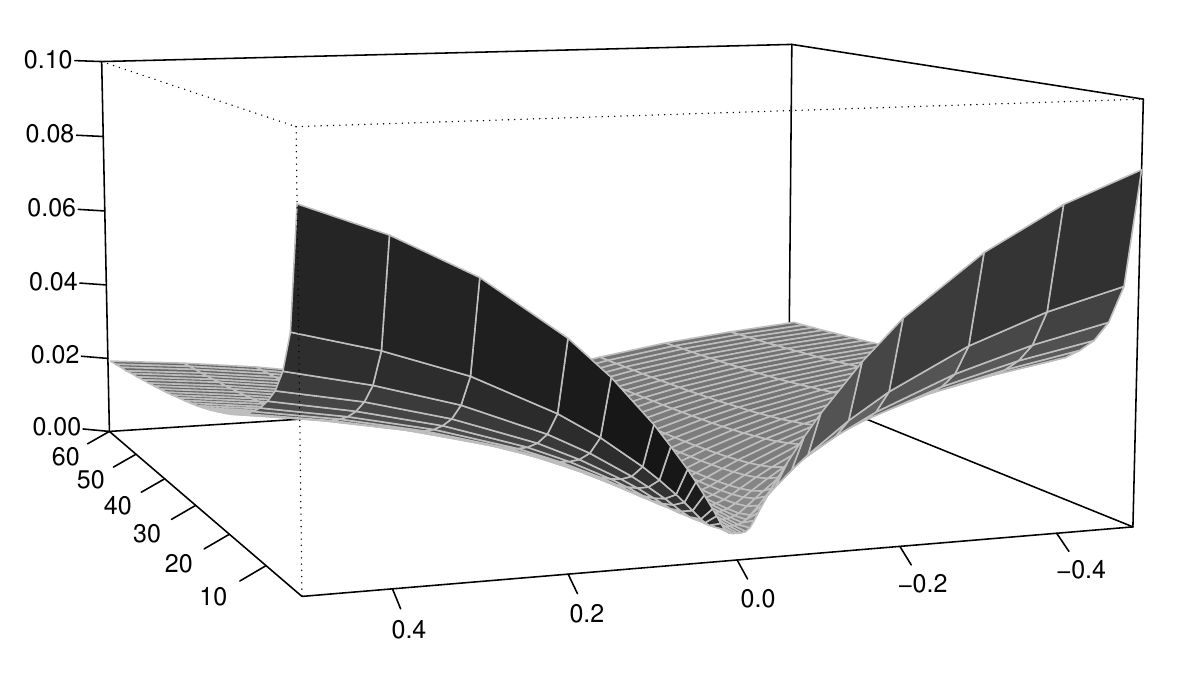}
\caption{\label{ch3:fig:smile}A plot of the implied volatility surface
$\sigma_\imp(\kappa,t)$ of our model, obtained
by Monte Carlo simulations,
for maturity $t \in \{1, \ldots, 60\}$ days and log-strike $\kappa
\in [-0.5, 0.5]$ (for graphical clarity, only odd days are drawn).
We refer to the caption of Figure~\ref{fig:comp} for the choice of parameters.}
\end{figure}

\begin{figure}
\centering
\subfloat[][\hfill $\kappa = a \, \bkappa_2(\sfc^{1/D} \, t)$ \hfill\, \\
$a = 3$ (bottom), $a = 9$ (top)]
{\includegraphics[width=.43\columnwidth]{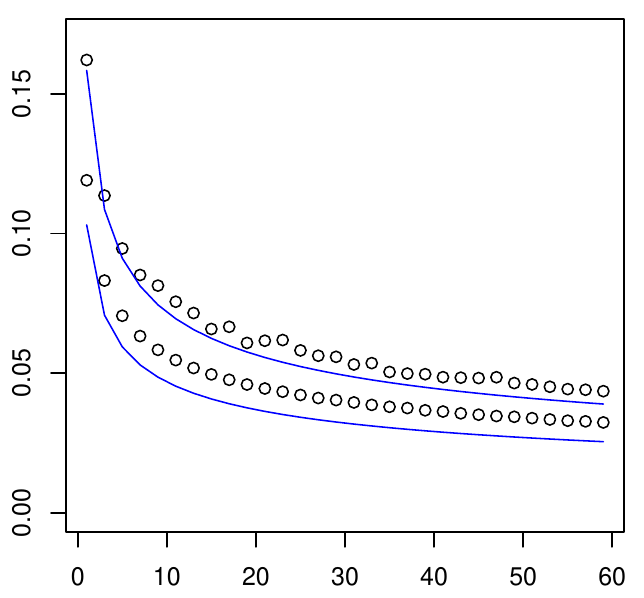}\label{fig:a}}
\qquad \
\subfloat[][\hfill $\kappa = a \, \bkappa_2(\sfc^{1/D} \, t)$ \hfill\, \\
$a \!\in\! \{.5, 1, 2\}$ (bottom to top)]
{\includegraphics[width=.43\columnwidth]{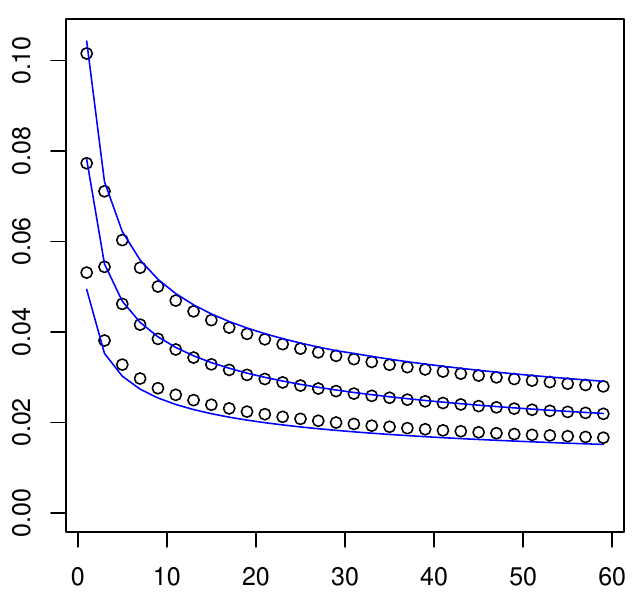}\label{fig:b}}\\
\smallskip
\subfloat[][\hfill $\kappa = a \, \bkappa_2(\sfc^{1/D} \, t)$ \hfill\, \\
$a =\! .1$ (bottom), $a=\!.3$ (top)]
{\includegraphics[width=.43\columnwidth]{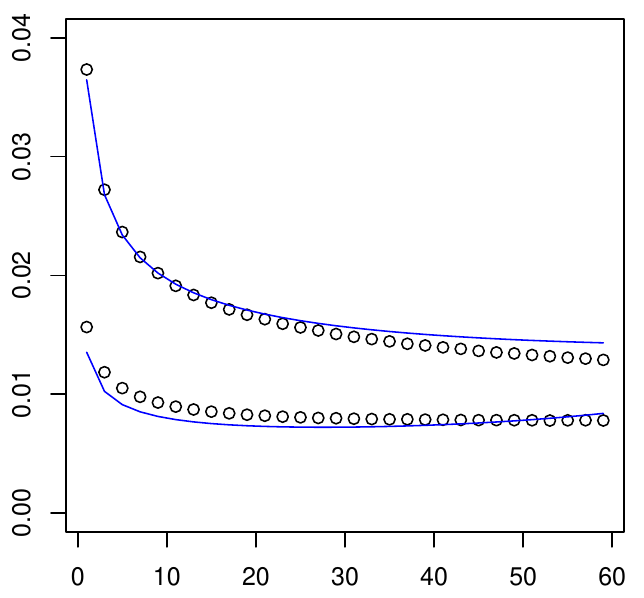}\label{fig:c}}
\qquad \
\subfloat[][\hfill $\kappa = .2 \, \bkappa_1(\sigma_0^2 \, t)$ \hfill\, \\
\phantom{safd asdf aadsfa s afsd}]
{\includegraphics[width=.43\columnwidth]{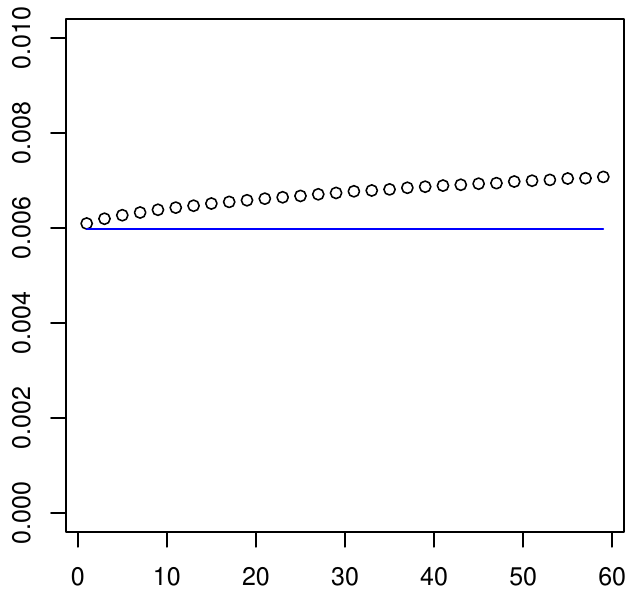}\label{fig:d}}
\caption{\label{fig:comp} Comparison between
the true implied volatility
$\sigma_\imp(\kappa,t)$ of our model (circles, Monte Carlo) and the asymptotic formulas in
Theorem~\ref{ch3:th:main} (solid lines),
for maturity $t \in \{1, \ldots, 60\}$ days and log-strike $\kappa$ as specified
in each caption (only odd days are drawn).
Formulas \eqref{ch3:eq:smile}, \eqref{ch3:eq:smile2}, \eqref{ch3:eq:smile1},
\eqref{ch3:eq:smile0} are plotted in 
 figures {\sc\subref{fig:a}}, {\sc\subref{fig:b}}, {\sc\subref{fig:c}}, {\sc\subref{fig:d}}, respectively.\\
We have fixed ``typical'' values of the parameters $D = 0.16$,
$\lambda = 10^{-3}$, $V = 10^{-2}$ (close to those estimated
in \cite{cf:ACDP} on the \emph{Dow Jones Industrial Average} time series)
and $\tau_0 = 1/\lambda = 10^3$
(equivalently, $\sigma_0 \simeq 6 \cdot 10^{-3}$,
cf.\ \eqref{ch3:eq:sigma0} and figure~{\sc\subref{fig:d}}), which yield
$\sfc^{1/D} \simeq 10^{-6}$, cf.\ \eqref{ch3:eq:sigmat}.\\
(The irregular behavior of the top circles in figure~{\sc\subref{fig:a}}
is due to Monte Carlo numerical inaccuracies, caused by the large
values of $\kappa$.) \\
The plots were generated using the software R \cite{R}.
The code is available on the web
page \url{http://www.matapp.unimib.it/~fcaraven/c.html}.}
\label{fig:variability}
\end{figure}

\begin{remark}\rm
In Theorem~\ref{ch3:th:main},
the maturity $t$ enters the functions $\bkappa_1(\cdot)$,
$\bkappa_2(\cdot)$
only through the \emph{dimensionless quantities $\lambda t$,
$\sigma_0^2 t$ and $\sfc^{1/D} t$}, which do not depend on the unit in which the maturity
is measured. This is relevant for the accuracy of our formulas, cf.\ Figure~\ref{fig:comp}.
(Note that both $\bkappa_1(t)$ and $\bkappa_2(t)$ contain the term
$\log \frac{1}{t}$: even though $\log \frac{1}{\lambda t} \sim \log \frac{1}{t}$ as $t \to 0$,
for non-zero values of $t$ there can be an important difference between 
$\log\frac{1}{\lambda t}$ and $\log\frac{1}{t}$.)
\end{remark}

\begin{remark}\rm
The four relations \eqref{ch3:eq:smile}, \eqref{ch3:eq:smile2},
\eqref{ch3:eq:smile1} and \eqref{ch3:eq:smile0}
match at the boundaries of the respective intervals of applicability: 
\begin{itemize}
\item relation \eqref{ch3:eq:smile1} reduces to \eqref{ch3:eq:smile0}
for $\kappa = \sqrt{2D+1} \, \bkappa_1(\sigma_0^2 \, t)$;

\item relation \eqref{ch3:eq:smile2} reduces to \eqref{ch3:eq:smile1} as $a \to 0$, by \eqref{ch3:eq:asf};

\item relation \eqref{ch3:eq:smile2} matches with \eqref{ch3:eq:smile} as $a \to \infty$.
In fact, plugging $a = \kappa / \bkappa_2(\sfc^{1/D}\, t)$ in \eqref{ch3:eq:smile2}, and
using the asymptotic relation \eqref{ch3:eq:asf} as $a \to \infty$,
relation \eqref{ch3:eq:smile2} becomes
\begin{equation*}
	\sigma_\imp(\kappa,t) \sim 
	\sqrt{\frac{\sfc^{1/D}}{2 \sfC}} \, 
	\left(\frac{\rule[-1em]{0pt}{1em}\displaystyle\frac{\kappa}{\sfc^{1/D} \, t}}
	{\sqrt{(1-D) \log \frac{1}{\lambda \, t}}}\right)^{\frac{1/2-D}{1-D}} \,,
\end{equation*}
and note that, for $\kappa \sim a \, \bkappa_2(\sfc^{1/D} \, t)$, one has
$(1-D) \log \frac{1}{\lambda \, t} \sim \log \frac{\kappa}{\sfc^{1/D} \, t}$ as $t\to 0$.
\end{itemize}

\end{remark}

\begin{remark}\rm
In the limiting case $D = \frac{1}{2}$ (that we exclude from our analysis)
one has $\sigma_0 = \sfc = V$, 
cf.\ \eqref{ch3:eq:sigmat} and \eqref{ch3:eq:sigma0},
and the minimum in \eqref{ch3:eq:f}
is attained for $m=0$  (we adopt the convention $0^0 := 1$), so that $\sff(a) = \sff_0(a) = \frac{a^2}{2}$.
As a consequence, relations \eqref{ch3:eq:smile}, \eqref{ch3:eq:smile2} and \eqref{ch3:eq:smile0}
reduce to $\sigma_\imp(\kappa,t) \sim V$,
in perfect agreement with the fact that for $D = \frac{1}{2}$ 
our model becomes Black\&Scholes model with
constant volatility $V$, cf.\ Remark~\ref{ch3:rem:BS}.\footnote{Note 
that relation \eqref{ch3:eq:smile1} 
does not apply for $D=\frac{1}{2}$, because 
in this case $\bkappa_1(\cdot) = \bkappa_2(\cdot)$ and consequently 
there is no $\kappa$ for which 
$\sqrt{2D+1} \,
\bkappa_1(\sigma_0^2 \, t) \leq \kappa \ll \bkappa_2(\sfc^{1/D}\, t)$.}
\end{remark}

\subsection{On generalized Ornstein-Uhlenbeck processes}
\label{sec:OU}

Let us denote the
set of jump times of the Poisson process $(N_t)_{t\ge 0}$
by $\cT := \{t \in [0,\infty): \ N_t = N_{t-}+1\}$.
Observe that $\sigma_t$ in \eqref{ch3:eq:sigmat} solves the
following differential equation, for any $t \not\in \cT$:
\begin{equation} \label{eq:sde}
	\dd (\sigma_t^2) 
	= - c \, (\sigma_t^2)^\gamma \, \dd t \,, \qquad \text{where} \qquad
	c := \frac{1-2D}{(2D \sfc^2)^{\frac{1}{1-2D}}} \,, \qquad
	\gamma := \frac{2-2D}{1-2D} \,, 
\end{equation}
while for $t\in\cT$ one has $\sigma_t = \infty$.
(Incidentally, note that $\gamma \in (2,\infty)$, since $D \in (0,\frac{1}{2})$.)

Consider a compound Poisson process $L_t = \sum_{k=1}^{N_t} J_k$
with non-negative jump variables $(J_k)_{k\in\N}$.
If we denote by $(\tilde \sigma_t)_{t\ge 0}$ the solution
of equation \eqref{eq:OU}, with $\tilde \sigma_0 = \sigma_0$ and
with the same parameters $c, \gamma$ as in \eqref{eq:sde},
it follows that $\tilde \sigma_t$ solves the same equation
\eqref{eq:sde} as $\sigma_t$, for $t\not\in \cT$. Since $\tilde \sigma_t < \sigma_t = \infty$
for $t \in \cT$, a monotonicity argument shows that 
\begin{equation} \label{eq:IItilde}
	\forall t\ge 0: \qquad
	\tilde\sigma_t \le \sigma_t \qquad \text{and} \qquad
	\tilde I_t := \int_0^t \tilde\sigma_s^2 \, \dd s
	\le \int_0^t \sigma_s^2 \, \dd s =: I_t \,.
\end{equation}

Let us now consider a stochastic volatility model $(\tilde S_t)_{t\ge 0}$
solving $\dd \tilde S_t = \tilde \sigma_t \, \tilde S_t \, \dd B_t$, where
the Brownian motion $(B_t)_{t\ge 0}$ is independent of $(\tilde \sigma_t)_{t\ge 0}$.
Denoting by $\tilde c(\kappa,t)$ and $\tilde\sigma_\imp(\kappa,t)$
the corresponding call price and implied volatility,
the following bounds hold:
\begin{equation}\label{eq:csigma}
	\forall \kappa \in \R, \ t \ge 0: \qquad
	\tilde c(\kappa,t) \le c(\kappa,t) \qquad \text{and} \qquad
	\tilde \sigma_\imp(\kappa,t) \le \sigma_\imp(\kappa,t) \,.
\end{equation}
These follow by \eqref{eq:IItilde} in conjunction with 
\eqref{ch3:eq:IV} and \eqref{ch3:HW},
using the monotonicity of the Black\&Scholes price $\CBS(\kappa,v)$ in the volatility $v$.

\smallskip

We have shown that option price $c(\kappa,t)$
and implied volatility $\sigma_\imp(\kappa,t)$ of our
model give an upper bound for the corresponding quantities of 
\emph{any stochastic volatility model,
with volatility evolving as in
\eqref{eq:OU}}, with $\gamma > 2$ and with a subordinator with finite activity.
In other terms, our model provides \emph{the most extreme volatility profile} in 
this class of models. 
This information could be useful, e.g.,
when matching an observed smile with
a model in this class,
to get a priori bounds on the mean-reversion exponent $\gamma = \frac{2-2D}{1-2D}$
in \eqref{eq:OU}.

In order to improve the upper bounds \eqref{eq:csigma}, 
and possibly to obtain matching lower bounds,
information from the jump variables $J_k$ needs of course to be used.
Extending our results to the general class of models in \eqref{eq:OU}
with a finite activity subordinator
does not appear out of reach, since many of the techniques
we use are quite robust (see Section~\ref{ch3:sec:ld}).

\subsection{Discussion}
\label{ch3:sec:discussion}

We conclude this section with a more detailed discussion of
Theorem~\ref{ch3:th:main}, highlighting the most
relevant points and outlining further directions of research.

\medskip
\noindent
\textbf{Joint volatility surface asymptotics}.
In Theorem~\ref{ch3:th:main}
we allow for arbitrary families of $(\kappa,t)$, besides the usual regimes
$\kappa \to \infty$ for fixed $t$, or $t \downarrow 0$ for fixed $\kappa$.
Interestingly, this flexibility
yields \emph{uniform estimates on the implied volatility surface in open regions of the plane},
as we now show. Recalling \eqref{ch3:eq:gh},
for $T, M \in (0,\infty)$ we define the region
\begin{equation*}
	\cA_{T,M} := \bigg\{(\kappa,t) \in \R^2: \ 0 < t < T, \ \kappa > 
	M \bkappa_2(\sfc^{1/D} \, t) \bigg\} \,.
\end{equation*}

\begin{corollary}[Joint surface asymptotics]\label{ch3:th:surface}
Fix $T > 0$. For every $\epsilon > 0$ there exists $M = M(T,\epsilon) > 0$ such that
for all $(\kappa,t) \in \cA_{T,M}$
\begin{equation}\label{eq:surface}
	(1-\epsilon)
	\sqrt{\frac{\sfc^{1/D}}{2 \sfC}} 
	\left(\frac{\frac{\kappa}{\sfc^{1/D} \, t}}
	{\sqrt{\log \frac{\kappa}{\sfc^{1/D} \, t}}}\right)^{\frac{1/2-D}{1-D}} 
	\le \sigma_\imp(\kappa,t) \le
	(1+\epsilon)
	\sqrt{\frac{\sfc^{1/D}}{2 \sfC}} 
	\left(\frac{\frac{\kappa}{\sfc^{1/D} \, t}}
	{\sqrt{\log \frac{\kappa}{\sfc^{1/D} \, t}}}\right)^{\frac{1/2-D}{1-D}} \,.
\end{equation}
\end{corollary}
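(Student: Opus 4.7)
The plan is to deduce the corollary from Theorem~\ref{ch3:th:main}(a) by a compactness-plus-contradiction argument. Introduce the shorthand
\[
	g(\kappa,t) \,:=\, \frac{1}{\sqrt{2\,\sfC}}\left(\frac{\kappa/t}{\sqrt{\log(\kappa/t)}}\right)^{\frac{1/2-D}{1-D}},
\]
so that \eqref{eq:surface} amounts to the uniform estimate $\sup_{(\kappa,t)\in \cA_{T,M}} |\sigma_\imp(\kappa,t)/g(\kappa,t)-1|\le \epsilon$. If this failed for some $\epsilon_0>0$, one could extract a sequence $(\kappa_n,t_n)\in \cA_{T,n}$, i.e.\ with $0<t_n<T$ and $\kappa_n>n\,\bkappa_2(t_n)$, along which $|\sigma_\imp(\kappa_n,t_n)/g(\kappa_n,t_n)-1|>\epsilon_0$ for every $n$. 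The strategy is to show that this sequence always falls into the scope of Theorem~\ref{ch3:th:main}(a), thereby producing a contradiction.

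The core step is a dichotomy on the limit of (a subsequence of) $(t_n)$, which we may take in $[0,T]$; without loss of generality $T<1$, so that $\bkappa_2$ is defined on $(0,T]$. If $\bar t>0$, continuity of $\bkappa_2$ gives $\bkappa_2(t_n)\to \bkappa_2(\bar t)>0$, whence $\kappa_n>n\,\bkappa_2(t_n)\to\infty$; this is exactly the first regime of Theorem~\ref{ch3:th:main}(a). If $\bar t=0$, the condition $\kappa_n>n\,\bkappa_2(t_n)$ yields $\kappa_n\gg \bkappa_2(t_n)$ directly, which is the second regime of Theorem~\ref{ch3:th:main}(a). In either case $\kappa_n/t_n\to\infty$ (in the second regime use $\kappa_n/t_n>n\,t_n^{D-1}\sqrt{\log(1/t_n)}\to\infty$ since $D<1$), so $g(\kappa_n,t_n)$ is well defined for large $n$. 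Theorem~\ref{ch3:th:main}(a) then asserts $\sigma_\imp(\kappa_n,t_n)\sim g(\kappa_n,t_n)$, contradicting the choice of sequence.

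I do not anticipate any genuine analytic obstacle: Theorem~\ref{ch3:th:main} already encapsulates the entire content of the estimate, and the role of the corollary is merely to observe that its pointwise asymptotic conclusion is preserved along arbitrary families drawn from $\cA_{T,M}$ as $M\to\infty$. The only step requiring mild care is the dichotomy on $\bar t$, to channel every such family into one of the two regimes in part~(a); the ancillary restriction on the size of $T$ is a harmless bookkeeping issue, since the interesting region is in any case the small-$t$ part of $\cA_{T,M}$.
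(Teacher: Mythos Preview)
Your proposal is correct and follows essentially the same approach as the paper: assume the estimate fails for every $M$, extract a sequence $(\kappa_n,t_n)\in\cA_{T,M_n}$ with $M_n\to\infty$ and a subsequential limit $t_n\to\bar t\in[0,T]$, then split into the cases $\bar t>0$ (forcing $\kappa_n\to\infty$) and $\bar t=0$ (forcing $\kappa_n\gg\bkappa_2(t_n)$) to land in part~\eqref{iv:a} of Theorem~\ref{ch3:th:main} and derive a contradiction. Your write-up even adds a couple of sanity checks (well-definedness of $g$, the domain of $\bkappa_2$) that the paper leaves implicit.
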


\begin{proof}
By contradiction, assume that for some $T, \epsilon > 0$
and for every $M \in \N$ one can find $(\kappa, t) \in \cA_{T,M}$
such that relation \eqref{eq:surface} fails. We can then extract subsequences
$M_n \to \infty$ and $(\kappa_{n}, t_{n}) \in \cA_{T, M_n}$ such that $t_{n} \to \bar t \in [0,T]$.
The subsequence $((\kappa_{n}, t_{n}))_{n\in\N}$ satisfies the assumptions of
part \eqref{iv:a} in Theorem~\ref{ch3:th:main}: if $\bar t > 0$, then $\kappa_{n} \to \infty$, while
if $\bar t = 0$, then $\kappa_{n} \gg \bkappa_2(\sfc^{1/D} \, t_{n})$, because 
$(\kappa_{n}, t_{n}) \in \cA_{T, M_n}$
and $M_n \to \infty$.
However, relation \eqref{ch3:eq:smile} fails by construction and this
contradicts Theorem~\ref{ch3:th:main}.
\end{proof}

\smallskip
\noindent
\textbf{Small-maturity divergence of implied volatility}.
Relation \eqref{ch3:eq:smile} shows
that, for fixed $\kappa > 0$, the implied volatility \emph{diverges}
as $t \downarrow 0$, producing a very steep smile for
small maturity. This is typical
for models with jumps in the price \cite{cf:AL12}, but remarkably our
stochastic volatility model has \emph{continuous paths}.
What lies behind this phenomenon is the very same mechanism
that produces the multi-scaling of moments \cite{cf:ACDP}, i.e.,
the fact that the volatility $\sigma_t$ has \emph{approximate heavy tails}
as $t \downarrow 0$.

In order to give an explanation, 
we anticipate that, under mild assumptions, option price and tail probability are linked
by $c(\kappa,t) \asymp \P(X_t > \kappa)$ as $t \downarrow 0$
for fixed $\kappa > 0$
(see Theorem~\ref{ch2:th:main2b} below). In the Black\&Scholes
case $\CBS(\kappa, \sigma \sqrt{t}) \asymp
\exp(-\kappa^2 / (2 \sigma^2 t))$, hence by Definition~\ref{ch3:def:iv} it follows that
implied volatility and tail probability are linked by
\begin{equation} \label{ch3:eq:sigmaimpuseful}
	\sigma_\imp(\kappa,t) \sim \frac{\kappa}{\sqrt{2 t \,(- \log \P(X_t > \kappa))}} \,.
\end{equation}
This relation shows that $\sigma_\imp(\kappa,t)$ stays bounded as $t \downarrow 0$
when $-\log \P(X_t > \kappa) \sim C/t$ for some 
$C = C(\kappa) \in (0,\infty)$, as in the Heston model \cite{cf:JFL12}. 

This is \emph{not} the case for our model, where
$-\log \P(X_t > \kappa) \ll 1/t$. In fact, by \eqref{ch3:equivX},
$X_t =  W_{I_t}(1+o(1))$
as $t\downarrow 0$ hence $X_t \simeq W_{I_t}$
is approximately Gaussian with a random variance $I_t = \int_0^t \sigma_s^2 \, \dd s$,
which yields $\P(X_t > \kappa) \asymp \E[\exp(-\kappa^2/(2I_t))]$.
Although $\E[I_t] = O(t)$, the point is that $I_t$ can take with non negligible
probability \emph{atypically large values}, as large as $t^{D/(1-D)} \gg t$,
and this affects $\P(X_t > \kappa)$.
More precisely, by \eqref{ch3:eq:boundIt} below we can write
\begin{equation*}
	\P\Big(I_t > t^{\frac{D}{1-D}} \Big) \ge e^{-c \, t^{-\frac{D}{1-D} + o(1)}}
\end{equation*}
where $c \in (0,\infty)$ (the logarithm in \eqref{ch3:eq:boundIt} is absorbed in the $o(1)$ term).
This leads to 
\begin{equation*}
	\P(X_t > \kappa) \asymp \E\Big[ e^{-\frac{\kappa^2}{2I_t}} \Big]
	\ge e^{-\frac{\kappa^2}{2}  t^{-\frac{D}{1-D}}} \, \P\Big(I_t > t^{\frac{D}{1-D}} \Big)
	\ge e^{-c' \, t^{-\frac{D}{1-D} + o(1)}}
\end{equation*}
for some $c' \in (0,\infty)$.
Plugged into \eqref{ch3:eq:sigmaimpuseful}, this estimate gives the $t$-dependence
in \eqref{ch3:eq:smile}, apart from logarithmic factors
(we refer to relation \eqref{ch3:eq:Cramer2} below for a more precise estimate).

Atypically large values of $I_t$ are also the source of \emph{multi-scaling of moments}.
By $X_t \sim W_{I_t}$ as $t \to 0$, cf.\ \eqref{ch3:equivX}, we get
$\E[|X_t|^q] \sim c \, \E[|W_{I_t}|^q] = c \, \E[W_1^q] \, \E[(I_t)^{q/2}]$.
The typical values of $I_t$ are of order $t$, which would suggest
the usual diffusive scaling $\E[|X_t|^q] \sim (const.) \, t^{q/2}$. However,
since $I_t \ge \sfc^2 (t-\tau_1)^{2D} \ge \sfc^2 (t/2)^{2D} = c_1 \, t^{2D}$ 
when $\tau_1 \le t/2$
(see \eqref{ch3:eq:bebo} below),
\begin{equation*}
	\E[(I_t)^{q/2}] \ge c_1 \, \big( t^{2D} \big)^{q/2} \, \P(\tau_1 \le t/2) \ge
	c_2 \, t^{Dq+1} \,,
\end{equation*}
which yields the \emph{anomalous scaling}
$\E[|X_t|^q] \ge (const.) \, t^{Dq+1} \gg t^{q/2}$ for $q$ large enough
(more precisely $q > q^* := 1/(\frac{1}{2}-D)$).
We refer to \cite{cf:ACDP} for more details.

\medskip
\noindent
\textbf{On a ``universal'' asymptotic relation}.
In the regime when \eqref{ch3:eq:smile} holds, the implied volatility
$\sigma_\imp(\kappa,t)$ is asymptotically a function $f(\kappa/t)$ of just the ratio $(\kappa/t)$.
This feature appears to be shared
by \emph{different models without moment explosion}
(with the function $f(\cdot)$ depending on the model). For instance, in
Carr-Wu’s finite moment logstable model \cite{cf:CW04},
as shown in \cite[Theorem 3.1]{cf:CC},
\begin{equation*}
	\sigma_\imp(\kappa,t) \sim B_\alpha \bigg( \frac{\kappa}{t} \bigg)^{-\frac{2-\alpha}{2(\alpha-1)}}
	\ \ \ \text{for } \ \kappa \gg t^{1/\alpha} \,, 
\end{equation*}
where $B_\alpha$ is an explicit constant.
Another example is provided by Merton's jump diffusion model \cite{cf:M}
for which, extending \cite{cf:BF}, we showed in \cite[Theorem 3.4]{cf:CC} that
\begin{equation*} 
 \sigma_\imp^2(\kappa,t) \sim \frac{\delta}{2\sqrt{2}}
 \frac{\frac{\kappa}{t}}{\sqrt{\log \frac{\kappa}{t}}}
 \ \ \ \text{for } \ \kappa \gg \sqrt{\log\tfrac{1}{t}} \,.
\end{equation*}

To understand heuristically the source of this phenomenon, note that
$\sigma_\imp(\kappa,t) \sim f(\kappa/t)$ means in particular that
$\sigma_\imp(2\kappa, 2t) \sim \sigma_\imp(\kappa,t)$, which by
\eqref{ch3:eq:sigmaimpuseful} translates into
\begin{equation} \label{ch3:eq:heuri0}
	\P(X_{2t} > 2\kappa) \asymp
	\P(X_{t} > \kappa)^2 \,.
\end{equation}
If the log-price increments are \emph{approximately stationary}, in the sense that
$\P(X_{t} > \kappa) \asymp \P(X_{2t} - X_{t} > \kappa)$,
the previous relation can be rewritten more expressively as
\begin{equation} \label{ch3:eq:heuri}
	\P(X_{2t} > 2\kappa) \asymp
	\P(X_{t} > \kappa) \, \P(X_{2t} - X_t > \kappa) \,.
\end{equation}
This says, heuristically, that the most likely way to produce the event
$\{X_{2t} > 2\kappa\}$ is through the events $\{X_t > \kappa\}$
and $\{X_{2t}-X_t > \kappa\}$, which are approximately independent.

Relation \eqref{ch3:eq:heuri0}
holds indeed for our model, see \eqref{ch3:eq:Cramer2} below,
as well as for Carr-Wu and Merton models, 
in the regime when $\kappa$ 
is large enough, depending on $t$ (and on the model).
On the other hand, relation \eqref{ch3:eq:heuri0} 
typically \emph{fails} for models with moment explosion, such as the Heston model,
for which the implied volatility $\sigma_\imp(\kappa,t)$ is \emph{not} asymptotically
a function of just the ratio $(\kappa/t)$, cf.\ \cite[\S 3.3]{cf:CC}.

\medskip
\noindent
\textbf{Comparison with other models.} In the recent literature,
other models with continuous paths with a steep implied volatility 
smile close to maturity have been considered. 

In \cite{cf:GJR}, Guennoun, Jacquier and Roome
use large deviations techniques to compute
the asymptotic behaviour of the implied volatility for the fractional Heston model,
both close to and far from maturity.
As for our model, they prove that in the small maturity regime it is possible to obtain 
an implied volatility of order $t^{-\gamma}$ for every  $\gamma \in (0,\frac 12)$. 
A drawback of fractional volatility models is the fact that, due to the dependence 
on the past of the fractional Brownian motion, they are not Markov.
On the contrast, our model is Markovian, which can be an advantage for pricing purposes.

In \cite{cf:JR}, Jacquier and Roome suggest a simple generalization of the Black\&Scholes 
model, obtained by plugging in a random initial volatility instead of a deterministic one.  
In the special case when the initial volatility is distributed as the solution, 
at some time $\tau>0$, of the CEV stochastic differential equqtion 
$\dd Y_u=\xi \, Y_u^p \, \dd B_u$, 
they obtain the explicit asymptotic behavior of the implied volatility close to maturity,
which displays steepness of the smile.
An advantage of our model is that we do not need to introduce a random initial
volatility ---although we could also do it--- 
to produce the steepness of the smile for small maturity, but we
can work with a fixed initial volatility, as it is more customary.

\medskip
\noindent
\textbf{Further directions of research}.
The tail probability asymptotics in Theorem~\ref{ch3:th:tailprob} below
include the regime $t \to \infty$, which is however excluded for
the implied volatility asymptotics in Theorem~\ref{ch3:th:main} (and for
the option price asymptotics in
Theorem~\ref{ch3:lemma:calltgamma} below). This is because 
we rely on the approach in \cite{cf:CC}, recalled in \S\ref{ch3:sec:reminders} below, which
assumes that the maturity is bounded from above, but extension to unbounded
maturity are certainly possible with further work.
For general results in the
regime $t \to \infty$, we refer to~\cite{cf:T09}.

\smallskip

It should also be stressed that our model has a symmetric smile 
$\sigma_\imp(-\kappa,t) = \sigma_\imp(\kappa,t)$,
a limitation shared by 
all stochastic volatility
models with independent volatility (recall Remark~\ref{ch3:rem:shared}).
To produce an asymmetry, one should correlate the volatility with
the price (leverage effect). In the framework of our model, 
this can be obtained e.g.\ introducing \emph{jumps in the price} correlated to
those of the volatility. This possibility is investigated in \cite{cf:C}.

\section{Main result: tail probability and option price}
\label{ch3:sec:pricetail}

In this section we present explicit asymptotic estimates
for the option price $c(\kappa,t)$
and for the tail probability $\P(X_t > \kappa)$ of our model.
Before starting, we note that 
the following convergence in distribution follows
from relations \eqref{ch3:equivX} and \eqref{ch3:eq:It}
(see \S\ref{ch3:eq:gammatsec}):
\begin{equation} \label{ch3:eq:gammat}
	\frac{X_t}{\sqrt{t}} \, \xrightarrow[t \downarrow 0]{d} \,
	\sigma_0 \, W_1 \,,
\end{equation}
where $\sigma_0$ is the constant in \eqref{ch3:eq:sigma0}

\subsection{Tail probability}
For families of $(\kappa,t)$ satisfying \eqref{ch3:eq:assfamily},
we distinguish the regime of \emph{typical deviations},
when $\P(X_t > \kappa)$ is bounded away from zero,
from the regime of \emph{atypical deviations},
when $\P(X_t > \kappa) \to 0$. The former regime corresponds
to $t\to 0$ with $\kappa= O(\sqrt{t})$ and the (strictly positive) limit
of $\P(X_t > \kappa)$ can be easily computed, by \eqref{ch3:eq:gammat}.

On the other hand, the regime of atypical deviations
$\P(X_t > \kappa) \to 0$ includes $t\to 0$ with $\kappa \gg \sqrt{t}$
and $t \to \bar t \in (0,\infty)$ with $\kappa \to \infty$,
and also $t\to\infty$ with $\kappa\gg t$ (not
included in \eqref{ch3:eq:assfamily}).
In all these cases we determine 
an asymptotic equivalent of $\log \P(X_t > \kappa)$
which, remarkably, is sharp enough to get the estimates
on the implied volatility in Theorem~\ref{ch3:th:main}.
We refer to \S\ref{ch3:sec:reminders}-\S\ref{ch3:sec:reminders2} for more details,
where we summarize the general results of \cite{cf:CC} linking tail probability,
option price and implied volatility.

\smallskip

The following theorem, on the asymptotic behavior
of $\P(X_t > \kappa)$, is proved in Section~\ref{ch3:sec:LDP}.
Note that items \eqref{tp:a}, \eqref{tp:b} and \eqref{tp:c} correspond to atypical deviations,
while the last item \eqref{tp:d} corresponds to typical deviations.
We recall that $\bkappa_1(\cdot)$ and $\bkappa_2(\cdot)$
are defined in \eqref{ch3:eq:gh}.

\begin{theorem}[Tail probability]\label{ch3:th:tailprob}
Consider a family of values of $(\kappa,t)$ with $\kappa \ge 0$, $t > 0$.
\begin{aenumerate}
\item\label{tp:a} If $\,t \to \infty$ and $\,\kappa \gg \sfc^{1/D} \, t$, or if
$\,t \to \bar t \in (0,\infty)$ and $\,\kappa \to \infty$,
or if $\,t\to 0$ and $\,\kappa \gg \bkappa_2(\sfc^{1/D} \, t)$,
\begin{equation} \label{ch3:eq:Cramer2}
	\rule{0pt}{1.9em}
	\log\P(X_t > \kappa) \sim  
	- \sfC \left( \frac{\kappa}{ \sfc \, t^D}\right)^{\frac{1}{1-D}}
	\left(\log \frac{\kappa}{ \sfc^{1/D} \, t}\right)^{\frac{1/2-D}{1-D}}  \,,
\end{equation}
where the constant $\sfC$ is defined in \eqref{ch3:eq:asf}.

\item\label{tp:b} If $\,t\to 0$ and $\,\sqrt{2}\,\bkappa_1(\sigma_0^2 \,t) <
\kappa \le M \, \bkappa_2( \sfc^{1/D} \,t)$, for some $\,M < \infty$,
\begin{align}\label{ch3:eq:gentail}
	\log\P(X_t > \kappa) \sim 
	- \sff \Bigg(\frac{\kappa}{\bkappa_2( \sfc^{1/D} \,t)}
	\, \sqrt{\frac{\log (\sfc^{1/D} \, t)}{\log (\lambda \, t)} } \Bigg) 
	\log \frac{1}{\lambda \,t}  \,,
\end{align}
where $\sff(\cdot)$ is defined in \eqref{ch3:eq:f}.

\item\label{tp:c} 
If $\,t\to 0$ and $\,\sqrt{t} \ll \kappa \le \sqrt{2} \, \bkappa_1(\sigma_0^2 \, t)$,
\begin{equation} \label{ch3:eq:withoutfa2}
	\log\P(X_t > \kappa) \sim - \frac{\kappa^2}{2\,\sigma_0^2 \, t}
	\sim - \frac{1}{2} \bigg( \frac{\kappa}{\bkappa_1( \sigma_0^2 \, t)}\bigg)^2
	\, \log\frac{1}{ \sigma_0^2 \, t} \,.
\end{equation}

\item\label{tp:d} Finally, if $\, t \to 0$ and $\,\kappa \sim a \sqrt{ \sigma_0^2 \, t}$
for some $a \in [0,\infty)$,
\begin{equation}\label{ch3:eq:boh}
	\P(X_t > \kappa) \to 1 - \Phi\big(a\big) > 0 \,,
\end{equation}
where $\Phi(\cdot)$ is the distribution function of a
standard Gaussian, cf.\ \eqref{ch3:eq:BS2}.
\end{aenumerate}
\end{theorem}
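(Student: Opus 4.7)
My plan is to reduce everything to tail estimates on the time change $I_t$, exploiting the representation $X_t = W_{I_t} - \frac{1}{2}I_t$ in which $W$ is independent of $I_t$. Conditioning on $I_t$ yields
\begin{equation*}
\P(X_t > \kappa) = \E\!\left[\overline{\Phi}\!\left(\frac{\kappa + I_t/2}{\sqrt{I_t}}\right)\right], \qquad \overline{\Phi} := 1-\Phi,
\end{equation*}
so the problem becomes a Laplace-type computation whose effective rate function is, heuristically, $-\log\P(I_t \in \dd v) + \kappa^2/(2v)$ via the Gaussian tail $\overline{\Phi}(x)\asymp e^{-x^2/2}$. Part \eqref{tp:d} follows immediately from the convergence in distribution \eqref{ch3:eq:gammat}, with no further work.

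For part \eqref{tp:c}, where $\sqrt{t}\ll\kappa\le(\sqrt{2}\,\sigma_0)\bkappa_1(t)$, I expect the event $\{N_t=0\}$ to dominate: on it, $I_t=\sfc^2[(t-\tau_0)^{2D}-(-\tau_0)^{2D}]\sim\sigma_0^2 t$ as $t\downarrow 0$, while $\P(N_t=0)=e^{-\lambda t}\to 1$. Combined with the standard Gaussian tail this yields the $-\kappa^2/(2\sigma_0^2 t)$ decay. A matching upper bound follows by showing that events $\{N_t\ge 1\}$ contribute negligibly in log-scale, since their probability is $O(t)$ while $\kappa^2/t\ll\log(1/t)$ in the regime at hand.

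For parts \eqref{tp:a} and \eqref{tp:b}, the crux is to decompose according to the number of jumps $N_t = m$:
\begin{equation*}
\P(X_t > \kappa, N_t = m) \;\approx\; \frac{(\lambda t)^m e^{-\lambda t}}{m!} \; \E\!\left[e^{-\kappa^2/(2 I_t)} \,\Big|\, N_t = m\right].
\end{equation*}
Conditional on $N_t=m$, the jump times $\tau_1,\ldots,\tau_m$ are distributed as order statistics of i.i.d.\ uniforms on $[0,t]$, and by concavity of $x\mapsto x^{2D}$ (since $2D<1$), the bulk of $I_t \approx \sfc^2\sum_k(\tau_k-\tau_{k-1})^{2D}$ is controlled by $\sfc^2 m^{1-2D} t^{2D}$, attained when the spacings are all of order $t/m$. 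This is precisely where the Poisson large deviations results of Section~\ref{ch3:sec:ld} (Corollary~\ref{ch3:coro:LDP I}) come in, giving the sharp bounds on $\P(I_t\ge v\mid N_t=m)$ needed here. Plugging in and optimizing over $m$, for small $t$ the leading order reads
\begin{equation*}
\log\P(X_t > \kappa) \;\approx\; -\,\inf_{m \in \N_0}\bigg\{ m\log\tfrac{1}{t} + \frac{\kappa^2}{2\sfc^2 m^{1-2D} t^{2D}}\bigg\} \;=\; - \sff\!\left(\frac{\kappa}{\bkappa_2(t)}\right)\log\tfrac{1}{t},
\end{equation*}
which is exactly \eqref{ch3:eq:gentail}. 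Part \eqref{tp:a} in the small-$t$ regime $\kappa\gg\bkappa_2(t)$ then follows from the large-$a$ asymptotic of $\sff$ in \eqref{ch3:eq:asf}, producing the $\sfC(\kappa/t^D)^{1/(1-D)}$ scaling with the logarithmic correction. The cases $t\to\bar t\in(0,\infty)$ and $t\to\infty$ with $\kappa\gg t$ require an analogous but distinct bookkeeping: the factor $\log(1/t)$ is replaced by $\log(m/(\lambda t))$ coming from $\log m!$, but the optimization of $m\log(m/(\lambda t)) + \kappa^2/(2\sfc^2 m^{1-2D} t^{2D})$ still yields \eqref{ch3:eq:Cramer2}.

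The main obstacle will be obtaining sharp conditional large deviations for $\sum_{k=1}^{m}(\tau_k-\tau_{k-1})^{2D}$ given $N_t=m$, \emph{uniformly} in regimes where both $m^\star$ and $\kappa$ may depend on $t$. Getting matching upper and lower bounds with genuine logarithmic precision (i.e.\ $\sim$ rather than merely $\asymp$) is delicate, and the bulk of the technical work will be isolated in Section~\ref{ch3:sec:ld}. A secondary issue is the careful gluing of the four regimes at their boundaries, so that the piecewise description is consistent and no intermediate scale is overlooked.
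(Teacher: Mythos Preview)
Your plan is sound and tracks the paper's argument closely for parts \eqref{tp:b}, \eqref{tp:c}, \eqref{tp:d}. The decomposition over $\{N_t=m\}$, the deterministic bound $I_t\le\sigma_0^2 t+\sfc^2 m^{1-2D}t^{2D}$ from H\"older, and the matching lower bound via a near-equispaced event $A_m\subseteq\{N_t=m\}$ are exactly what the paper does.

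For part \eqref{tp:a}, however, you take a somewhat different route. The paper does \emph{not} optimize the decomposition $\sum_m \P(X_t>\kappa,\,N_t=m)$ directly; instead it computes the exponential moments $\log\E[e^{bI_t}]$ (Proposition~\ref{ch3:th:crucial}), transfers to $\log\E[e^{pX_t}]$ via the identity $\E[e^{pX_t}]=\E[e^{\frac{1}{2}p(p-1)I_t}]$, and then invokes the G\"artner--Ellis theorem to obtain a full LDP for $X_t/\kappa$. Your direct approach should work and is in some sense more elementary---the optimization over $m$ you describe is precisely the computation carried out \emph{inside} the proof of Proposition~\ref{ch3:th:crucial}---but the G\"artner--Ellis route has the advantage of handling all three sub-regimes of \eqref{tp:a} uniformly and of automatically producing matching upper and lower bounds without ad hoc estimates on the tail of the sum over $m$.

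Two small corrections. First, Corollary~\ref{ch3:coro:LDP I} gives the \emph{unconditional} large deviations of $I_t$, not bounds on $\P(I_t\ge v\mid N_t=m)$; the conditional control you actually need is the elementary deterministic H\"older bound (Lemma~\ref{ch3:th:usefulub}) for the upper bound and a near-equispaced configuration for the lower bound. Second, in part \eqref{tp:c} you write ``$\kappa^2/t\ll\log(1/t)$'', but in fact the assumption only gives $\kappa^2/t\le 2\sigma_0^2\log(1/t)$; at the upper boundary the contribution of $\{N_t\ge 1\}$ is of the \emph{same} logarithmic order as $\{N_t=0\}$, not negligible, though this does not spoil the log-asymptotic since both yield $t^{1+o(1)}$.
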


\begin{remark}\rm\label{ch3:rem:tpbexplicit}
Observe that item \eqref{tp:b} in Theorem~\ref{ch3:th:tailprob}
can be made more explicit:
\begin{itemize}
\item if $\,t\to 0$ and $\,\kappa \sim a\, \bkappa_2(\sfc^{1/D} \,t)$, 
for some $\,a \in (0,\infty)$,
\begin{equation} \label{ch3:eq:withfa}
	\log\P(X_t > \kappa) \sim - \sff( \tilde a) \, \log \frac{1}{ \lambda \,t} \,,
	\qquad \text{where} \qquad
	\tilde a := a \, \sqrt{\frac{\log (\sfc^{1/D} \, t)}{\log (\lambda \, t)} } \,;
\end{equation}

\item
if $\,t\to 0$ and $\sqrt{2} \, \bkappa_1(\sigma_0^2 \, t) 
< \kappa \ll \bkappa_2(\sfc^{1/D} \, t)$,
\begin{equation} \label{ch3:eq:withoutfa1}
	\log\P(X_t > \kappa) \sim - \log \frac{1}{ \lambda \, t} \,,
\end{equation}
because $\sff(0) = 1$ by \eqref{ch3:eq:asf}.
\end{itemize}
\end{remark}

\subsection{Option price}
We finally turn to the option price $c(\kappa,t)$.
As we discuss in \S\ref{ch3:sec:reminders2}, 
sharp estimates on the implied volatility, such as in Theorem~\ref{ch3:th:main},
can be derived from the asymptotic behavior of $\log c(\kappa,t)$
if $\kappa$ is bounded away from zero, or from
the asymptotic behavior of $\log (c(\kappa,t)/\kappa)$
if $\kappa \to 0$. For this reason, in 
the next theorem (proved in Section~\ref{ch3:sec:lemma:calltgamma})
we give the asymptotic behavior of $\log c(\kappa,t)$
and $\log ( c(\kappa,t) / \kappa )$, expressed
in terms of the tail probability $\P(X_t > \kappa)$
(whose asymptotic behavior can be read from Theorem~\ref{ch3:th:tailprob}).

\begin{theorem}[Option price]
\label{ch3:lemma:calltgamma}
Consider a family of values of $(\kappa,t)$ with $\kappa \ge 0$, $t > 0$.

\begin{aenumerate}
\item\label{op:a} If $\,t \to \bar t \in (0,\infty)$ and $\,\kappa \to \infty$,
or if $\,t \to 0$ and $\,\kappa \to \bar\kappa \in (0,\infty]$,
\begin{equation}\label{ch3:eq:casinfty}
	\log c(\kappa,t) \sim \log \P(X_t > \kappa)  \,.
\end{equation}

\item\label{op:b} If $\,t \to 0$ and $\,\kappa \to 0$ with 
$\,\kappa \gg \sqrt{ \sigma_0^2 \, t}$,
excluding the ``anomalous regime'' of next item,
\begin{equation}\label{ch3:eq:cas0good}
	\log \big( c(\kappa,t)/\kappa \big) \sim \log \P(X_t > \kappa)  \,.
\end{equation}

\item\label{op:c} If $\,t \to 0$ and $\,\sqrt{2D+1} \, \bkappa_1( \sigma_0^2 \, t) 
\leq \kappa \ll \bkappa_2( \sfc^{1/D} \, t)$
(``anomalous regime''),
\begin{equation}\label{ch3:eq:cas0bad2}
	\log \big( c(\kappa,t)/\kappa \big)
	\sim \log \frac{\bkappa_2(\sfc^{1/D} \, t)}{\kappa}
	- \log \frac{1}{\lambda \, t} \,.
\end{equation}

\item\label{op:d} If $\,t \to 0$ and $\,\kappa \sim a \sqrt{ \sigma_0^2 \, t}$
for some $a \in (0,\infty)$,
\begin{equation}\label{ch3:eq:typ}
	\frac{c(\kappa, t)}{\kappa} \to
	D( a ) \,, \qquad
	\text{with} \qquad
	D(x) := \frac{\phi(x)}{x} - \Phi(-x) \,,
\end{equation}
where $\phi(\cdot)$ and $\Phi(\cdot)$ are the density and distribution
function of a standard Gaussian.

\item\label{op:e} Finally, if $\,t \to 0$ and $\,\kappa \ll \sqrt{\sigma_0^2 \, t}$
(including $\kappa = 0$),
\begin{equation}\label{ch3:eq:typlast}
	c(\kappa, t) \sim \frac{ \sigma_0}{\sqrt{2\pi}} \, \sqrt{ t} \, .
\end{equation}
\end{aenumerate}
\end{theorem}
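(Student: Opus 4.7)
My starting point is the integration-by-parts identity
\begin{equation*}
c(\kappa, t) \,=\, \int_\kappa^\infty e^y \, \P(X_t > y) \, \dd y,
\end{equation*}
which follows by Fubini from $(e^{X_t}-e^\kappa)^+ = \int_\kappa^\infty e^y \ind_{\{y < X_t\}} \, \dd y$, together with the Hull-White formula \eqref{ch3:HW} and the CLT-type convergence \eqref{ch3:eq:gammat}. I would carry out a case analysis over the five regimes, invoking Theorem~\ref{ch3:th:tailprob} in each case.

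\emph{Parts (a) and (b).} These non-anomalous atypical regimes fit directly into the general framework of \cite{cf:CC} recalled in \S\ref{ch3:sec:reminders2}: the integrand $e^y\,\P(X_t > y)$ is concentrated near $y = \kappa$, yielding $c(\kappa, t) \asymp e^\kappa\,\P(X_t > \kappa) \asymp \P(X_t > \kappa)$ to logarithmic precision. Since $|\log\kappa|$ is negligible compared to $|\log\P(X_t > \kappa)|$ in the relevant ranges (a direct consequence of Theorem~\ref{ch3:th:tailprob}), taking logarithms would yield \eqref{ch3:eq:casinfty} in (a) and \eqref{ch3:eq:cas0good} in (b). Matching upper and lower bounds come from the integral representation itself and from the trivial lower bound $c(\kappa,t) \ge (e^{\kappa+h}-e^\kappa)\,\P(X_t > \kappa+h)$ for a suitable $h > 0$.

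\emph{Parts (d) and (e).} In the typical-deviation regime $\kappa = O(\sqrt t)$ with $t \downarrow 0$, I would use $X_t/\sqrt t \Rightarrow \sigma_0 W_1$ from \eqref{ch3:eq:gammat}. Since $S_t = e^{X_t}$ is a non-negative martingale with $\E[S_t] = 1$, the family $(S_t)_{t>0}$ is uniformly integrable, so one may pass to the limit in $\E[(S_t - e^\kappa)^+]/\sqrt t$. Using the Taylor expansion $S_t - e^\kappa = (X_t - \kappa)(1+o(1))$ on the dominant event $\{|X_t|, \kappa = O(\sqrt t)\}$ and computing $\E[(\sigma_0 W_1 - a)^+] = \sigma_0\phi(a/\sigma_0) - a\Phi(-a/\sigma_0)$ gives \eqref{ch3:eq:typ} with the stated $D(\cdot)$; the specialization $a = 0$ gives \eqref{ch3:eq:typlast}.

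\emph{Part (c) is the main obstacle.} In the anomalous range $(\sqrt{2D+1}\,\sigma_0)\bkappa_1(t) \le \kappa \ll \bkappa_2(t)$, the naive equivalence $c \asymp \kappa\,\P(X_t > \kappa)$ \emph{fails} because the integrand $e^y\,\P(X_t > y)$ is not sharply concentrated at $y = \kappa$: by \eqref{ch3:eq:withoutfa1}, the tail $\P(X_t > y) \asymp t$ stays essentially flat throughout $y \in [(\sqrt 2 \sigma_0)\bkappa_1(t), \bkappa_2(t)]$, and since $e^y \to 1$ this flat range contributes
\begin{equation*}
\int_{(\sqrt 2 \sigma_0)\bkappa_1(t)}^{\bkappa_2(t)} e^y\,\P(X_t > y)\,\dd y \,\asymp\, t \cdot \bkappa_2(t) \,\asymp\, t^{D+1}
\end{equation*}
to leading logarithmic order. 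I would then verify that the Gaussian sub-range $[\kappa, (\sqrt 2 \sigma_0)\bkappa_1(t)]$ contributes at most $\asymp t^{D+1}$ (by Laplace's method applied to $e^y \exp(-y^2/(2\sigma_0^2 t))$, using $\kappa \ge (\sqrt{2D+1}\sigma_0)\bkappa_1(t)$ to pin down the boundary value), and that the Poisson range $[\bkappa_2(t), \infty)$ is negligible because $\sff(y/\bkappa_2(t)) > 1$ there (cf.\ \eqref{ch3:eq:gentail}), so $c(\kappa, t) \asymp t^{D+1}$ logarithmically. Dividing by $\kappa$ and writing $\log\kappa = (\log\kappa/\log t)\log t$ then yields \eqref{ch3:eq:cas0bad2}. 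A clean matching lower bound is available from Hull-White by conditioning on $\{N_t = 1\}$, an event of probability $\sim \lambda t$ that generates $I_t \asymp \sfc^2 t^{2D}$ with $\sqrt{I_t} \gg \kappa$, so $\CBS(\kappa,\sqrt{I_t}) \asymp \sqrt{I_t} \asymp t^D$ contributes $\asymp t^{D+1}$ to $c(\kappa,t)$ on its own.
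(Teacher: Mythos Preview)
Your overall strategy matches the paper's: parts (a), (d), (e) and most of (b) are deduced from the black-box results of \cite{cf:CC} (Theorems~\ref{ch2:th:main2b} and~\ref{ch2:th:main2a}) after checking the tail-regularity hypothesis via Theorem~\ref{ch3:th:tailprob} and the moment conditions via Lemmas~\ref{ch3:th:noexpl} and~\ref{ch3:lemma:moments}; part (c) is done by a direct splitting of $c(\kappa,t)=\int_\kappa^\infty e^y\,\P(X_t>y)\,\dd y$ at levels $\asymp\bkappa_1(t)$ and $\asymp\bkappa_2(t)$.

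There is, however, one genuine gap in your plan for part (b). The subregime $\kappa\sim a\,\bkappa_1(t)$ with $a\in(0,\sqrt{2D+1}\,\sigma_0]$ does \emph{not} fit into the \cite{cf:CC} framework: once $\rho\kappa$ exceeds $(\sqrt{2}\,\sigma_0)\,\bkappa_1(t)$, Theorem~\ref{ch3:th:tailprob}\eqref{tp:b} gives $\log\P(X_t>\rho\kappa)\sim-\log\tfrac1t$ independently of $\rho$, so $I_+(\rho)$ stabilises at $2\sigma_0^2/a^2$ and the required divergence $\lim_{\rho\to\infty}I_+(\rho)=\infty$ fails. Equivalently, your ``concentration near $y=\kappa$'' claim breaks down here: the flat plateau $\P(X_t>y)\asymp t$ on $y\in[(\sqrt{2}\sigma_0)\bkappa_1(t),\,\bkappa_2(t)]$ contributes $\asymp t^{D+1}$, which competes with the Gaussian piece $\asymp t^{1/2+a^2/(2\sigma_0^2)}$ and only loses because $a\le\sqrt{2D+1}\,\sigma_0$. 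The paper treats this subregime by exactly the three-piece splitting you describe for (c) (see \eqref{ch3:eq:upperboundcallK}--\eqref{ch3:eq:lbuff}); your own part-(c) machinery would close the gap, but you should flag that it is needed.

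Two minor remarks on (c). Your Hull--White lower bound via $\{N_t=1\}$ is a valid alternative to the paper's elementary bound $c(\kappa,t)\ge(\hat\kappa-\kappa)\,\P(X_t>\hat\kappa)$ with $\hat\kappa=\epsilon\,\bkappa_2(t)$ (relation \eqref{ch3:eq:lowerboundctgammafinale}); both yield $\asymp t^{D+1}$. Note, though, that on $\{N_t=1\}$ one has $I_t\asymp(t-\tau_1)^{2D}$, which is not uniformly $\asymp t^{2D}$; restrict to $\{N_t=1,\,\tau_1<t/2\}$. Also, ``$\sqrt{I_t}\gg\kappa$'' can fail at the upper end of the anomalous range (where $\kappa/t^D$ may grow like a power of $\log\tfrac1t$), but since $\kappa\ll\bkappa_2(t)$ forces $\kappa^2/I_t\ll\log\tfrac1t$, one still gets $\CBS(\kappa,\sqrt{I_t})\asymp t^D$ to logarithmic precision, so your conclusion stands.
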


\section{Key large deviations estimates}

\label{ch3:sec:ld}

In this section we prove the following crucial estimate on the
exponential moments of the time-change process $I_t$,
defined in \eqref{ch3:eq:It}. As we show in the next Section~\ref{ch3:sec:LDP},
this will be the key to the proof of relation \eqref{ch3:eq:smile}
in Theorem~\ref{ch3:th:main}.
We recall that $\sfc$ is defined in \eqref{ch3:eq:sigmat}.

\begin{proposition} \label{ch3:th:crucial}
Fix a family of values of $(b,t)$ with $b > 0$, $t > 0$ such that
\begin{equation} \label{ch3:eq:assat}
	\text{either} \, \ t \to \bar t \in (0,\infty] \text{ and } \ b \to \infty \,,
	\quad \ \ \text{or} \ \, t \to 0 \, \text{ and } \
	\frac{b}{\frac{1}{t^{2D}} \log \frac{1}{t}}  \to \infty \,.
\end{equation}
Then the following asymptotic relation holds:
\begin{align} \label{ch3:eq:keygoal0}
	\log \E[e^{b I_t}] \sim 
	\tilde C\, \big(\sfc^{1/D} \, t \big) \, b^{\frac{1}{2D}}(\log b)^{\frac{2D-1}{2D}}\,, 
	\qquad \text{with} \qquad \tilde C
	= (2D)^{\frac{1}{2D}}(1-2D)^{\frac{1-2D}{2D}} \,.
\end{align}
\end{proposition}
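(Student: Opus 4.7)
The plan is to compute $\E[e^{bI_t}]$ by conditioning on $N_t = n$ and applying Laplace's method to the resulting simplex integral. From \eqref{ch3:eq:It}, the constant $-\sfc^2(-\tau_0)^{2D}$ produces a prefactor $e^{-b\sfc^2(-\tau_0)^{2D}}$ whose logarithm is merely $O(b)$, hence negligible relative to the target asymptotic of order $b^{1/(2D)}(\log b)^{(2D-1)/(2D)}$ (since $1<\tfrac{1}{2D}$); similarly, the shift $\tau_1-\tau_0 \ge |\tau_0|$ (rather than $\ge 0$) contributes only an $O(1)$ distortion of one spacing. Absorbing both and writing $y_0 := t-\tau_{N_t}$, $y_k := \tau_k-\tau_{k-1}$ for $1\le k\le N_t$, the spacings $(y_0,\ldots,y_n)$ are uniformly distributed on the simplex $\Delta_n(t) := \{y\ge 0: \sum_k y_k = t\}$ conditionally on $\{N_t=n\}$, so the problem reduces to the asymptotic analysis of
\begin{equation*}
	\sum_{n=0}^\infty e^{-\lambda t}\,\lambda^n \int_{\Delta_n(t)} \exp\!\Bigl( b\sfc^2 \sum_{k=0}^n y_k^{2D} \Bigr) \, \dd y_1\cdots \dd y_n .
\end{equation*}

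For the upper bound I would use concavity of $x\mapsto x^{2D}$ (Jensen's inequality): on $\Delta_n(t)$, $\sum_{k=0}^n y_k^{2D}\le (n+1)^{1-2D}t^{2D}$, so $\E[e^{bI_t}] \le \E[\exp(b\sfc^2 (N_t+1)^{1-2D}t^{2D})]$. Substituting Stirling into $\log\P(N_t=n)$ and treating $n$ as continuous (valid since the saddle diverges),
\begin{equation*}
	\log\E[e^{bI_t}] \lesssim \sup_{m>0}\psi(m), \qquad \psi(m) := -m\log m + m(1+\log\lambda t) + b\sfc^2\, m^{1-2D} t^{2D} .
\end{equation*}
The first-order condition $\log(m/\lambda t) = b\sfc^2 (1-2D)\,t^{2D}\,m^{-2D}$ has solution $m^* = [2D\sfc^2(1-2D)]^{1/(2D)}\, t\,(b/\log b)^{1/(2D)}(1+o(1))$, which is precisely the scale at which assumption \eqref{ch3:eq:assat} forces $m^*\to\infty$. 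Substituting $m^*$ back and using the algebraic identity $(1-2D)^{(1-2D)/(2D)}[1-(1-2D)] = 2D\,(1-2D)^{(1-2D)/(2D)}$ together with $(2D)\cdot(2D)^{(1-2D)/(2D)} = (2D)^{1/(2D)}$ yields exactly the claimed constant $\tilde C$.

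For the matching lower bound I would restrict the $n$-th simplex integral, at $n := \lfloor m^*\rfloor$, to the near-uniform box $B_r := \{y\in\Delta_n(t) : |y_k - t/(n+1)| \le r\,t/(n+1) \text{ for all } k\}$ with a small fixed $r\in(0,1)$. On $B_r$ one has $\sum y_k^{2D} \ge (1-r)^{2D}(n+1)^{1-2D}t^{2D}$, and a CLT-type estimate for the constraint $\sum y_k = t$ inside the product $[(1-r)t/(n+1), (1+r)t/(n+1)]^{n+1}$ gives $\mathrm{vol}(B_r)\ge c_r\,(t/(n+1))^n/\sqrt{n+1}$. Repeating the same optimization with these modified inputs produces $\log\E[e^{bI_t}] \ge (1-r)\,\tilde C\, t\, b^{1/(2D)}(\log b)^{(2D-1)/(2D)}(1+o(1))$, and letting $r\downarrow 0$ closes the gap.

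The main obstacle I anticipate is uniformity across the joint regime \eqref{ch3:eq:assat}, particularly when $t\to 0$ with $bt^{2D}/\log(1/t)$ only barely diverging. The subtle point is that the corrections from $\log n!$ and from the simplex volume factors are of the \emph{same} order as the leading contribution at the saddle — both scale as $m^*\log m^* \sim t\,b^{1/(2D)}(\log b)^{(2D-1)/(2D)}$ — so the precise constant $\tilde C$ only emerges through the exact algebraic cancellation above, which must be tracked uniformly in the parameter family rather than at a single limit point.
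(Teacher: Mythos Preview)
Your approach is essentially the paper's: the upper bound via the concavity/H\"older inequality $\sum_k y_k^{2D}\le (n+1)^{1-2D}t^{2D}$, reduction to a Laplace sum over the Poisson value with Stirling, and the saddle $m^*\sim (2D(1-2D)\sfc^2)^{1/(2D)}\,t\,(b/\log b)^{1/(2D)}$ all match the paper exactly, as does your diagnosis that the constant $\tilde C$ emerges from an exact cancellation between terms of the same order at the saddle. For the lower bound the paper likewise restricts to near-uniform spacings at $n=\lfloor m^*\rfloor$, but instead of a simplex-volume/CLT estimate it works directly with the i.i.d.\ exponential inter-arrival times, defining an explicit event $A_m$ and bounding $\P(A_m)\ge e^{-(1+2\epsilon)(1+1/m)\lambda t}(\epsilon\lambda t)^m/m^m$ from the densities --- slightly more elementary, and it sidesteps the issue that your stated bound $\mathrm{vol}(B_r)\ge c_r\,(t/(n{+}1))^n/\sqrt{n{+}1}$ should really carry an $r^n$-type factor (harmless here, since $n\log r=O(m^*)=o(m^*\log b)$, but worth tracking).
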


From this one can easily derive Large Deviations estimates
 for the right tail of $I_t$.
 
\begin{corollary}\label{ch3:coro:LDP I}
Consider a family of values of $(\kappa, t)$ with $\kappa > 0$, $t > 0$ such that
\begin{equation} \label{ch3:eq:added}
\begin{cases}
\text{either $t \to 0$ and $\kappa \gg t^{2D}$, }\\
\text{or $t \to \bar t \in (0,\infty)$ and $\kappa \to \infty$,}\\
\text{or $t \to \infty$ and $\kappa \gg t$.}
\end{cases}
\end{equation}
Then the following relation holds:
\begin{equation} \label{ch3:eq:boundIt}
	\log\P(I_t>\kappa) \sim - \frac{1}{1-2D}
	\left(\frac{\kappa}{(\sfc^{1/D}  \,t)^{2D}}
	\right)^{\frac{1}{1-2D}}\left(\log \frac{\kappa }{\sfc^{1/D} \, t}\right)
	\,. 
\end{equation}
\end{corollary}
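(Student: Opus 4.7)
The plan is to deduce \eqref{ch3:eq:boundIt} from Proposition~\ref{ch3:th:crucial} through a classical Cram\'er-type argument: the Chernoff bound gives the upper bound and an exponential change of measure yields the matching lower bound.

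For the upper bound, I would apply Markov's inequality: for every $b>0$,
$$\P(I_t > \kappa) \le e^{-b\kappa}\,\E[e^{b I_t}],$$
so that by Proposition~\ref{ch3:th:crucial}
$$\log \P(I_t > \kappa) \le -b\kappa + (1+o(1))\,\tilde C\, t\, b^{1/(2D)} (\log b)^{(2D-1)/(2D)}.$$
I would then optimize over $b$: differentiating formally and ignoring the slowly-varying $\log$ factor gives the approximate saddle
$$b^* \sim \Big(\frac{\kappa}{2D\,\tilde C\, t}\Big)^{\!2D/(1-2D)} \log\frac{\kappa}{t},$$
and one checks that in each of the three sub-regimes of \eqref{ch3:eq:added}, $b^*$ satisfies the growth condition \eqref{ch3:eq:assat} required by Proposition~\ref{ch3:th:crucial}. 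Substituting $b^*$ back, and collapsing the various powers of $\tilde C$, $2D$ and $1-2D$ via the identity $\tilde C^{2D/(1-2D)} = \sfc^{2/(1-2D)}(2D)^{1/(1-2D)}(1-2D)$, the factor $1/(1-2D)$ and the argument $(\kappa/(\sfc^2 t^{2D}))^{1/(1-2D)}\log(\kappa/t)$ of the RHS of \eqref{ch3:eq:boundIt} fall out.

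For the matching lower bound I would use an exponential tilt: define $\P_b$ by $d\P_b/d\P = e^{b I_t}/\E[e^{b I_t}]$ and write, for any $r > 0$,
$$\P(I_t > \kappa) \;\ge\; e^{\Lambda(b) - b(\kappa+r)}\,\P_b\bigl(\kappa < I_t \le \kappa+r\bigr), \qquad \Lambda(b) := \log \E[e^{b I_t}].$$
Choosing $b=b^*$ so that $\E_b[I_t] = \Lambda'(b^*) \approx \kappa$, a Chebyshev estimate using $\mathrm{Var}_b(I_t) = \Lambda''(b^*) \approx (\alpha-1)\kappa/b^*$ (with $\alpha = 1/(2D)$) shows that the tilted probability exceeds $1/2$ for $r$ a constant multiple of $\sqrt{\kappa/b^*}$. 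Since $\sqrt{b^* \kappa} \to \infty$ in the regime \eqref{ch3:eq:added}, the correction $b^* r \sim \sqrt{b^* \kappa}$ is negligible next to $b^* \kappa$, so the lower bound matches the upper bound to leading order.

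The main technical obstacle is that Proposition~\ref{ch3:th:crucial} supplies the value of $\Lambda(b)$ but not of its derivatives, and logarithmic asymptotic equivalence cannot be differentiated in general. To control $\Lambda'(b^*)$ and $\Lambda''(b^*)$ I would exploit convexity of $\Lambda$ and bracket $\Lambda'(b^*)$ between the finite differences $[\Lambda(b^*(1+\epsilon)) - \Lambda(b^*)]/(\epsilon b^*)$ and $[\Lambda(b^*) - \Lambda(b^*(1-\epsilon))]/(\epsilon b^*)$, each of which can be evaluated by Proposition~\ref{ch3:th:crucial} up to factors $1 + O(\epsilon)$; an analogous device on second-order differences controls $\Lambda''$. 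Ensuring that all bounds are uniform over the family \eqref{ch3:eq:added} is routine but must be tracked carefully when passing from the $b\to\infty$ statement of Proposition~\ref{ch3:th:crucial} to the saddle $b = b^*(\kappa,t)$.
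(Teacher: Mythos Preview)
Your approach is sound in spirit---it is precisely the Cram\'er machinery underlying G\"artner--Ellis---but the paper takes a more economical route. Rather than optimizing the Chernoff bound by hand and then tilting, the paper rescales: setting $\gamma_{\kappa,t} := (\kappa/t^{2D})^{1/(1-2D)} \log(\kappa/t)$, Proposition~\ref{ch3:th:crucial} applied at $b = \alpha\,\gamma_{\kappa,t}/\kappa$ (for each fixed $\alpha>0$) yields
\[
\frac{1}{\gamma_{\kappa,t}} \log \E\!\left[\exp\!\left(\alpha\,\gamma_{\kappa,t}\,\frac{I_t}{\kappa}\right)\right]
\;\longrightarrow\; \Lambda(\alpha) = \tilde C\,\Big(\tfrac{1-2D}{2D}\Big)^{\!\frac{1-2D}{2D}}\,\alpha^{1/(2D)}\,.
\]
Since $\Lambda$ is finite and smooth on $[0,\infty)$, the G\"artner--Ellis theorem applies directly and gives both bounds at once; the Legendre transform $I(x)$ is computed explicitly and evaluated at $x=1$. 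This sidesteps the obstacle you flag: G\"artner--Ellis needs differentiability only of the \emph{limit} $\Lambda$, never of the pre-limit cumulant generating functions.

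Your bracketing of $\Lambda'(b^*)$ by convex finite differences is correct. The analogous device for $\Lambda''(b^*)$ is more fragile than you suggest: a leading-order asymptotic $\Lambda(b) = F(b)(1+o(1))$ gives second differences of size $O(\epsilon^2 F(b)) + o(F(b))$, and the $o(F(b))$ error swamps the signal unless $\epsilon$ is kept bounded away from zero, which is too coarse for pointwise control of $\Lambda''$. If you wish to avoid quoting G\"artner--Ellis, the clean fix is to mimic its lower-bound proof: under the tilt $\P_{b^*}$, establish concentration of $I_t/\kappa$ near $1$ via the Chernoff \emph{upper} bound (which you already have, and which transfers to the tilted measure with limiting cumulant $\alpha\mapsto\Lambda(\alpha+\alpha^*)-\Lambda(\alpha^*)$), rather than via Chebyshev and $\Lambda''$.
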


\begin{remark}\label{ch3:rem:LD}\rm
Recalling \eqref{ch3:eq:It}, the time-change process $I_t$ can be seen as
a natural additive functional of the inter-arrival times
$\tau_k - \tau_{k-1}$ of a Poisson process:
\begin{equation} \label{ch3:eq:concave}
	I_t = g(t-\tau_{N_t}) - g(-\tau_0) + \sum_{k=1}^{N_t} g(\tau_k - \tau_{k-1})  \,,
\end{equation}
with the choice $g(x) := \sfc^2 x^{2D}$.
Remarkably, Proposition~\ref{ch3:th:crucial} 
and Corollary~\ref{ch3:coro:LDP I} continue to hold
for a \emph{wide class of functions $g(\cdot)$}, as it is clear from the proofs:
what really matters is the asymptotic behavior $g(x) \sim \sfc^2 x^{2D}$
as $x \downarrow 0$.
(Also note that the value of $\tau_0$ in \eqref{ch3:eq:concave} plays no role
in Proposition~\ref{ch3:th:crucial} 
and Corollary~\ref{ch3:coro:LDP I}, so one can set $\tau_0 = 0$.)
\end{remark}

\begin{proof}[Proof of Corollary~\ref{ch3:coro:LDP I} (sketch).]
The proof is completely analogous to that of Theorem~\ref{ch3:th:LDP+} in Section~\ref{ch3:sec:LDP}, 
to which we refer for more details. Let us set
\begin{equation*} 
 \gamma_{\kappa,t}= \left(\frac{\kappa}{(\sfc^{1/D}  \,t)^{2D}}\right)^{\frac{1}{1-2D}}
 \left(\log \frac{\kappa }{\sfc^{1/D} \,t}\right)  \,, \quad \text{so that} \quad
 \frac{\gamma_{\kappa,t}}{\kappa} = 
 \left(\frac{\kappa}{\sfc^{1/D} \,t}\right)^{\frac{2D}{1-2D}}
 \left(\log \frac{\kappa }{\sfc^{1/D} \,t}\right)\,.
\end{equation*}
By \eqref{ch3:eq:added}, the family
$(t,b)$ with $b := \frac{\gamma_{\kappa,t}}{\kappa}$
satisfies \eqref{ch3:eq:assat}. Then \eqref{ch3:eq:keygoal0} yields, for $\alpha \ge 0$,
\[
 \log \E \left( \exp\left(\alpha \, \gamma_{\kappa, t} \, \frac{I_t}{\kappa}
\right) \right) \sim \Lambda(\alpha) \, \gamma_{\kappa, t} \,,
\qquad \text{where} \qquad
 \Lambda(\alpha) := \tilde C \left(\frac{1-2D}{2D}\right)^{\frac{1-2D}{2D}}\alpha ^{\frac{1}{2D}}\,,
\]
with $\tilde C$ defined in \eqref{ch3:eq:keygoal0}. 
By the G\"artner-Ellis Theorem \cite[Theorem 2.3.6]{cf:DZ},\footnote{In principle one
should compute $\Lambda(\alpha)$ for all $\alpha\in\R$ in order to apply the
G\"artner-Ellis Theorem, which yields a full Large Deviations Principle. However,
being interested in the right-tail behavior, cf.\ \eqref{ch3:eq:boundIt}, it is enough
to focus on $\alpha \ge 0$, as it is clear from the proof in \cite[Theorem 2.3.6]{cf:DZ}.}
we get
\begin{equation} \label{ch3:eq:GEmod}
	\log \P\bigg( \frac{I_t}{\kappa} > x \bigg) \sim -\gamma_{\kappa,t} \, I(x) \,,
\end{equation}
where $I(\cdot)$ is the Fenchel-Legendre transform of $\Lambda(\cdot)$, i.e. (for $x \ge 0$)
\[
\begin{split}
 I(x) & := \sup_{\alpha \in \R} \big\{ \alpha x - \Lambda(\alpha) \big\} 
 = \big( \bar\alpha x - \Lambda(\bar\alpha) \big) \Big|_{\bar \alpha = 
 \frac{2D}{1-2D} \left(\frac{2D}{\tilde C} x \right)^{\frac{2D}{1-2D}}} \\
 & = \frac{2D}{1-2D} \Bigg[ \left(\frac{2D}{\tilde C} \right)^{\frac{2D}{1-2D}}
 - \tilde C \left(\frac{2D}{\tilde C} \right)^{\frac{1}{1-2D}} \Bigg]
 x^{\frac{1}{1-2D}} 
= \left(\frac{2D \, x}{\tilde C^{2D}}\right)^{\frac{1}{1-2D}}
 = \frac{ x^{\frac{1}{1-2D}} }{1-2D} \,.
\end{split}
\]
Setting $x = 1$ in \eqref{ch3:eq:GEmod} yields \eqref{ch3:eq:boundIt}.
\end{proof}

\subsection{Preliminary results}
\label{ch3:prelsec}

We start with a useful upper bound on $I_t$
(defined in \eqref{ch3:eq:It}).

\begin{lemma} \label{ch3:th:usefulub}
For all $t \ge 0$ the following upper bound holds:
\begin{equation}\label{ch3:eq:eqItupfin}
\begin{split}
	 I_t & \le \sigma_0^2 \, t \,+\, \sfc^2 \, N_t^{1-2D}\, t^{2D} \,,
\end{split}
\end{equation}
where the constants $\sigma_0$ and $\sfc$ are defined in
\eqref{ch3:eq:sigma0} and \eqref{ch3:eq:sigmat}.
\end{lemma}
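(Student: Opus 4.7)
The plan is to decompose $I_t$ at the first Poisson jump time and control each piece by a different ingredient: monotonicity of $\sigma_s^2$ on the pre-jump interval, and concavity of $x\mapsto x^{2D}$ thereafter.

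The first step is to write $I_t = \int_0^{\tau_1\wedge t}\sigma_s^2\,\dd s + \int_{\tau_1\wedge t}^t \sigma_s^2\,\dd s$. On $[0,\tau_1\wedge t]$ the Poisson process has not yet jumped, so \eqref{ch3:eq:sigmat} shows that $\sigma_s^2 = 2D\,\sfc^2 (s-\tau_0)^{2D-1}$ is a nonincreasing function of $s$ (the exponent $2D-1$ is negative) which equals $\sigma_0^2$ at $s=0$ by \eqref{ch3:eq:sigma0}. Integrating yields at most $\sigma_0^2\,t$ for this piece, which accounts for the first summand in \eqref{ch3:eq:eqItupfin}.

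The second step handles the post-jump integral, assuming $N_t\ge 1$ (otherwise this piece is zero and the bound is trivial, since $1-2D>0$ gives $0^{1-2D}=0$). From the explicit formula \eqref{ch3:eq:It}, after subtracting off the contribution on $[0,\tau_1]$, one reads that the post-jump integral equals $\sfc^2$ times a sum of exactly $N_t$ nonnegative terms $\delta_i^{2D}$, namely $(t-\tau_{N_t})^{2D}$ together with $(\tau_k-\tau_{k-1})^{2D}$ for $k=2,\ldots,N_t$, whose bases $\delta_i$ add up to $t-\tau_1 \le t$. Concavity of $x\mapsto x^{2D}$ (Jensen's inequality for the uniform average on $\{1,\ldots,N_t\}$) then gives $\sum_i \delta_i^{2D} \le N_t^{1-2D}\bigl(\sum_i \delta_i\bigr)^{2D} \le N_t^{1-2D}\,t^{2D}$. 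Summing the two bounds produces \eqref{ch3:eq:eqItupfin}.

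There isn't really a hard part: the argument is just a clean two-term bound. The only item demanding care is the pathwise validity across the trivial case $N_t=0$, together with consistent bookkeeping of sum indices in \eqref{ch3:eq:It} when isolating the post-jump contribution---in particular, verifying that the leftover sum has precisely $N_t$ terms (not $N_t+1$) once the $k=1$ summand has been absorbed into the estimate on $[0,\tau_1\wedge t]$.
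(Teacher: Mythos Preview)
Your proof is correct and follows essentially the same approach as the paper: bound the pre-first-jump contribution by $\sigma_0^2\,t$ (the paper phrases this via the concavity inequality $(a+b)^{2D}-b^{2D}\le 2D\,b^{2D-1}a$, which is the integrated form of your monotonicity argument), then bound the remaining $N_t$ terms by the power-mean inequality (the paper uses H\"older with $p=1/(2D)$, which yields the same bound as your Jensen step). The bookkeeping of the $N_t$ terms summing to $t-\tau_1\le t$ is exactly as in the paper.
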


\begin{proof}
Since $(a+b)^{2D} - b^{2D} \le 2D \, b^{2D-1} \, a$ for all $a,b > 0$
by concavity (recall that $D < \frac{1}{2}$), on the event $\{N_t = 0\}$ we can write,
recalling \eqref{ch3:eq:It} and \eqref{ch3:eq:sigma0},
\begin{equation} \label{ch3:eq:Nt0}
	I_t = \sfc^2 \big\{ (t-\tau_0)^{2D} - (-\tau_0)^{2D}\big\} \le
	\sfc^2 \, 2D \, (-\tau_0)^{2D-1} \, t \, = \sigma_0^2 \, t \,,
\end{equation}
proving \eqref{ch3:eq:eqItupfin}.
Analogously, on the event $\{N_t \ge 1\} = \{0 \le \tau_1 \le t\}$ we have
\begin{equation}\label{ch3:eq:Itup}
 \begin{split}
I_t& := \sfc^2 \Bigg\{ (\tau_1-\tau_{0})^{2D} - (-\tau_0)^{2D} +
	\sum_{k=2}^{N_t} (\tau_k - \tau_{k-1})^{2D}
	+ (t-\tau_{N_t})^{2D} \Bigg\} \\
	& \leq  \sfc^2 \Bigg\{  2D(-\tau_0)^{2D-1}t+
	\sum_{k=2}^{N_t} (\tau_k - \tau_{k-1})^{2D}
	+ (t-\tau_{N_t})^{2D} \Bigg\} \,.
 \end{split}
\end{equation}
For all $\ell\in\N$ and $x_1, \ldots, x_\ell \in \R$,
it follows by H\"older's inequality with $p := \frac{1}{2D}$ that 
\begin{equation} \label{ch3:eq:Hold}
	\sum_{k=1}^\ell x_k^{2D} \leq 
	\Bigg( \sum_{k=1}^\ell (x_k^{2D})^p \Bigg)^{\frac{1}{p}}
	\Bigg( \sum_{k=1}^\ell 1 \Bigg)^{1-\frac{1}{p}}
	= \Bigg( \sum_{k=1}^\ell x_k \Bigg)^{2D} \ell^{1-2D} \,.
\end{equation}
Choosing $\ell = N_t$
and $x_1 = \tau_2-\tau_1$, $x_k = (\tau_{k+1} - \tau_{k})$ 
for $2 \le k \le \ell-1$
and $x_\ell = (t-\tau_{\ell-1})$,
since $\sum_{k=1}^\ell x_k = t - \tau_1 \le t$, we get from \eqref{ch3:eq:Itup}
\begin{equation*}
	I_t\leq \sfc^2 \left(2D(-\tau_0)^{2D-1}t+N_t^{1-2D}t^{2D}\right) =
	\sigma_0^2 \, t \,+\, \sfc^2 N_t^{1-2D}t^{2D} \,,
\end{equation*}
completing the proof of \eqref{ch3:eq:eqItupfin}.
\end{proof}

We now link the exponential moments of $I_t$ to those of the log-price $X_t$.

\begin{lemma}[No moment explosion]\label{ch3:th:noexpl}
For every $t \in [0,\infty)$ and $p \in 	\R$ one has
\begin{equation}\label{ch3:eq:noexpl}
	\E\big[e^{p X_t}\big] = \E\big[ e^{\frac{1}{2}p(p-1)I_t} \big] < \infty \,.
\end{equation}
\end{lemma}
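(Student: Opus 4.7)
The plan is to prove the identity by conditioning on $(I_t)_{t\ge 0}$, and the finiteness by invoking the upper bound from Lemma~\ref{ch3:th:usefulub} together with the elementary fact that a Poisson random variable has all exponential moments finite.

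\smallskip

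First, I would establish the identity $\E[e^{pX_t}] = \E[e^{\frac12 p(p-1) I_t}]$. Recall from \eqref{ch3:equivX} that $X_t = W_{I_t} - \frac12 I_t$, where $(W_t)_{t\ge 0}$ is a Brownian motion independent of $(I_t)_{t\ge 0}$. Conditioning on $I_t$, the random variable $W_{I_t}$ is Gaussian with mean $0$ and variance $I_t$, hence
\[
	\E\big[e^{p X_t} \,\big|\, I_t\big]
	= e^{-\frac{1}{2} p I_t}\, \E\big[e^{p W_{I_t}}\,\big|\, I_t\big]
	= e^{-\frac{1}{2} p I_t}\, e^{\frac{1}{2} p^2 I_t}
	= e^{\frac{1}{2} p(p-1) I_t}\,.
\]
Taking expectations gives the claimed identity (valid a priori in $[0,\infty]$).

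\smallskip

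Second, I would argue that $\E[e^{q I_t}] < \infty$ for every $q \in \R$. For $q \le 0$ this is trivial since $I_t \ge 0$; so assume $q > 0$. By Lemma~\ref{ch3:th:usefulub},
\[
	e^{q I_t} \le e^{q \sigma_0^2 \, t} \cdot \exp\!\big(q \sfc^2 \, t^{2D} \, N_t^{1-2D}\big)\,.
\]
Since $1 - 2D \in (0,1)$, the elementary bound $N_t^{1-2D} \le N_t + 1$ (split according to whether $N_t=0$ or $N_t \ge 1$) reduces the problem to bounding $\E[e^{\alpha N_t}]$ for $\alpha := q \sfc^2 \, t^{2D}$. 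Since $N_t$ is Poisson with parameter $\lambda t$, we have $\E[e^{\alpha N_t}] = \exp(\lambda t (e^\alpha - 1)) < \infty$, which concludes the argument.

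\smallskip

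The main (and only non-trivial) step is the finiteness, and the key input is Lemma~\ref{ch3:th:usefulub}: the sub-linear growth of $N_t^{1-2D}$ in $N_t$ is crucial, since otherwise a term of the form $e^{\alpha N_t^{1-2D}}$ with $N_t$ Poisson would fail to be integrable — but here the exponent $1-2D<1$ makes integrability automatic. Putting the two steps together yields both the identity and the finiteness asserted in \eqref{ch3:eq:noexpl}.
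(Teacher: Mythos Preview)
Your proof is correct and follows essentially the same approach as the paper: the identity is obtained by conditioning on $I_t$ (the paper uses the equivalent rewriting $W_{I_t}\overset{d}{=}\sqrt{I_t}\,W_1$), and finiteness follows from Lemma~\ref{ch3:th:usefulub} together with the fact that the Poisson variable $N_t$ has all exponential moments finite. One minor remark: your closing comment that the sub-linearity $1-2D<1$ is ``crucial'' for integrability is inaccurate, since $\E[e^{\alpha N_t}]<\infty$ already holds for a linear exponent (indeed, you use precisely this after bounding $N_t^{1-2D}\le N_t+1$); the sub-linearity plays no role in this lemma.
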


\begin{proof}
By the definition \eqref{ch3:equivX} of $X_t$, the independence
of $I$ and $W$ gives
\begin{equation*}
	\E\big[e^{p X_t}\big] = \E\big[e^{p (W_{I_t} - \frac{1}{2}I_t)}\big] 
	= \E\big[e^{p (\sqrt{I_t}W_{1} - \frac{1}{2}I_t)}\big]
	= \E\big[e^{\frac{1}{2} (p \sqrt{I_t})^2 - \frac{1}{2}p I_t}\big]
	= \E\big[ e^{\frac{1}{2}p(p-1)I_t} \big] \,,
\end{equation*}
which proves the equality in \eqref{ch3:eq:noexpl}.
Applying the upper bound \eqref{ch3:eq:eqItupfin} yields
\begin{equation*}
	\E\big[ e^{\frac{1}{2}p(p-1)I_t} \big] \le
	\E\big[ e^{\frac{1}{2}p(p-1)(\sigma_0^2 \, t \,+\, \sfc^2 N_t^{1-2D}t^{2D})} \big]
	= \E\big[ e^{c_1 t \,+\, c_2 \, t^{2D} \, N_t^{1-2D}} \big] 
	\le \E\big[ e^{c_1 t \,+\, c_2 \, t^{2D} \, N_t} \big] \,,
\end{equation*}
for suitable $c_1, c_2 \in (0,\infty)$ depending on $p$ and on the
parameters of the model.
The right hand side is finite because $N_t \sim Pois(\lambda t)$
has finite exponential moments of all orders.
\end{proof}

\subsection{Proof of Proposition~\ref{ch3:th:crucial}}

Let us set
\begin{equation}\label{ch3:eq:Btb}
 B_{t,b}= \big( \sfc^{1/D} \, t \big) \, b^{\frac{1}{2D}} \,
 (\log b)^{\frac{2D-1}{2D}} \,.
\end{equation}
We are going to show that \eqref{ch3:eq:keygoal0} holds by proving
separately upper and lower bounds, i.e.\
\begin{equation}\label{ch3:ubound}
	\limsup \frac{1}{B_{t,b}} 
	\log \E[e^{ b I_t}] \le \tilde C \,, \qquad
	\liminf \frac{1}{B_{t,b}} 
	\log \E[e^{ b I_t}] \ge \tilde C \,.
\end{equation}
We start with the upper bound and we split the proof in steps.

\medskip
\noindent
\emph{Step 1. Preliminary upper bound.}
The upper bound \eqref{ch3:eq:eqItupfin} on $I_t$ yields
\begin{equation*}
\begin{split}
 \E\big[e^{bI_t}\big] &= 
 \sum_{j=0}^{\infty}
 \E[e^{bI_t}|N_t=j ] \, \P (N_t=j) \le
e^{\sigma_0^2 \, t b} \sum_{j=0}^{\infty} 
e^{\sfc^2 t^{2D} b \, j^{1-2D}}
 e^{-\lambda t}\frac{(\lambda t)^j}{j!} \,.
\end{split}
\end{equation*}
Since $j! \sim j^j e^{-j} \sqrt{2\pi j}$ as $j\uparrow \infty$,
there is $c_1 \in (0,\infty)$ such that $j! \ge \frac{1}{c_1}
j^j e^{-j}$ for all $j\in\N_0$. Bounding $e^{-\lambda t} \le 1$, we obtain
\begin{equation}\label{ch3:eq:acav}
	\E\big[e^{bI_t}\big] \le
	c_1 \, e^{\sigma_0^2 \, t b} \sum_{j=0}^{\infty} 
	e^{\sfc^2 t^{2D} b \, j^{1-2D}}
	\frac{(\lambda t)^j}{j^j e^{-j}} 
	= c_1 \, e^{\sigma_0^2 \, t b} \sum_{j=0}^{\infty} 
	e^{f(j)} \,,
\end{equation}
where for $x \in [0,\infty)$ we set
\begin{equation}\label{ch3:eq: eqdefnf}
	f(x)= f_{t,b}(x) := \sfc^2 \left(t^{2D}b\right) x^{1-2D} - x \bigg( \log \frac{x}{\lambda t} - 1 \bigg) \,,
\end{equation}
with the convention $0\log 0=0$. Note that
\begin{equation} \label{ch3:eq:f'}
	f'(x) = (1-2D) \sfc^2 b \, \bigg( \frac{x}{t} \bigg)^{-2D} 
	- \log \bigg(\frac{x}{t}\bigg) + \log \lambda \,,
\end{equation}
hence $f'(x)$ is continuous and strictly decreasing on $(0,\infty)$, with 
$\lim_{x \downarrow 0} f'(x) = +\infty$ and
$\lim_{x \uparrow \infty }f'(x) = -\infty$. As a consequence, there is a unique
$\bar x_{t,b} \in (0,\infty)$ with $f'(\bar x_{t,b}) = 0$ and the function $f(x)$ attains
its global maximum on $[0,\infty)$ at the point $x=\bar x_{t,b}$:
\begin{equation}\label{ch3:eq:maxf}
	\max_{x\in [0,\infty)} f(x) = f(\bar x_{t,b}) \,.
\end{equation}

We are going to show that 
the leading contribution to the sum in \eqref{ch3:eq:acav}
is given by a \emph{single term} $e^{f(j)}$, for $j \asymp \bar x_{t,b}$.
We first need asymptotic estimates on $\bar x_{t,b}$ and $f(\bar x_{t,b})$.

\medskip
\noindent
\emph{Step 2. Estimates on $\bar x_{t,b}$ and $f(\bar x_{t,b})$.}
We first prove that
\begin{equation} \label{ch3:eq:xinfi}
	\bar x_{t,b} \to \infty \,, \qquad
	\frac{\bar x_{t,b}}{t} \to \infty \,,
\end{equation}
by showing that for any fixed $M \in (0,\infty)$ one has 
$\bar x_{t,b} > M$ and $\bar x_{t,b}/t > M$ eventually.
Since $b\to\infty$ by assumption \eqref{ch3:eq:assat},
uniformly for $x$ such that $(x/t) \in [0,M]$ we have
\begin{equation*}
\begin{split}
	f'(x) & \ge (1-2D) \sfc^2 b \, M^{-2D} 
	- \log M + \log \lambda =: C_1 b + C_2 \to \infty \,.
\end{split}
\end{equation*}
Recalling that $\bar x_{t,b}$ is the solution of $f'(x) = 0$,
it follows that $(\bar x_{t,b} / t) > M$ eventually. Likewise,
uniformly for $x$ such that $x \in [0,M]$, by assumption \eqref{ch3:eq:assat} we can write
\begin{equation*}
\begin{split}
	f'(x) & \ge (1-2D) \sfc^2 b \, \bigg( \frac{M}{t} \bigg)^{-2D} 
	- \log \bigg(\frac{M}{t}\bigg) + \log \lambda 
	=: C_1 \, t^{2D} \, b - \log \frac{1}{t} + C_2 \to \infty \,,
\end{split}
\end{equation*}
hence $\bar x_{t,b} > M$ eventually, completing the proof of \eqref{ch3:eq:xinfi}.

Next we prove that $\bar x_{t,b}$ has the following asymptotic behavior:
\begin{equation} \label{eq:xbart}
	\bar x_{t,b} \sim \big( 2D(1-2D) \sfc^2 \big)^{\frac{1}{2D}} \, \bigg( 
	\frac{t^{2D}\,b}
	{\log b} \bigg)^{\frac{1}{2D}} \,,
\end{equation}
arguing as follows.
Recalling \eqref{ch3:eq:f'}, the equation $f'(\bar x_{t,b}) = 0$ can be rewritten as
\begin{equation} \label{eq:transc}
	\frac{\bar x_{t,b}}{t} 
	= \Bigg( \frac{(1-2D) \sfc^2 b}{\log \frac{\bar x_{t,b}}{t} - \log \lambda} 
	\Bigg)^{\frac{1}{2D}} 
	\sim \Bigg( \frac{(1-2D) \sfc^2 \, b}{\log \frac{\bar x_{t,b}}{t}} \Bigg)^{\frac{1}{2D}} \,,
\end{equation}
because $\bar x_{t,b} / t \to \infty$ by \eqref{ch3:eq:xinfi}.
Inverting \eqref{eq:transc} and using again $\bar x_{t,b} / t \to \infty$ gives
\begin{equation} \label{eq:tool}
	\log \frac{\bar x_{t,b}}{t} \sim
	(1-2D) \sfc^2 \, b \, \bigg( \frac{\bar x_{t,b}}{t} \bigg)^{-2D} = o(b) \,,
\end{equation}
and we recall that $b \to \infty$ by assumption \eqref{ch3:eq:assat}.
Taking $\log$ in \eqref{eq:transc} gives
\begin{equation} \label{ch3:eq:pplug}
	\log \frac{\bar x_{t,b}}{t} \sim \frac{1}{2D} \big\{ \log[(1-2D) \sfc^2] + \log b
	- \log \big( \log \tfrac{\bar x_{t,b}}{t} \big) \big\}
	\sim \frac{1}{2D}  \log b \,,
\end{equation}
having used \eqref{eq:tool}. Plugging \eqref{ch3:eq:pplug}
into \eqref{eq:transc} gives precisely \eqref{eq:xbart}.

Looking back at \eqref{ch3:eq: eqdefnf}, we obtain
the asymptotic behavior of $f(\bar x_{t,b})$:
by \eqref{ch3:eq:xinfi} and \eqref{eq:tool}
\begin{equation} \label{eq:estmax}
\begin{split}
	f(\bar x_{t,b}) & = \sfc^2 \, (t^{2D} b) \, \bar x_{t,b}^{1-2D} -
	\bar x_{t,b} \, \log \frac{\bar x_{t,b}}{t} \big(1+o(1)\big)  \\
	& = \sfc^2 \, (t^{2D} b) \bar x_{t,b}^{1-2D} - 
	\bar x_{t,b} \frac{(1-2D) \sfc^2 \,(t^{2D} b)}{\bar x_{t,b}^{2D}} \big(1+o(1)\big) \\
	& = 2D \, \sfc^2 \, (t^{2D} b) \, \bar x_{t,b}^{1-2D}  \big(1+o(1)\big) \,,
\end{split}
\end{equation}
hence applying \eqref{eq:xbart},
and recalling the definition of $B_{t,b}$ and $\tilde C$ in \eqref{ch3:eq:Btb}
and \eqref{ch3:eq:keygoal0},
\begin{equation} \label{eq:estmax2}
	f(\bar x_{t,b}) \sim (2D)^{\frac{1}{2D}} \, (1-2D)^{\frac{1}{2D}-1} \, \sfc^{\frac{1}{D}}
	\frac{t \, b^{\frac{1}{2D}}}
	{(\log b)^{\frac{1}{2D}-1}}
	= \tilde C \, B_{t,b} \,.
\end{equation}

\medskip
\noindent
\emph{Step 3. Completing the upper bound.}
We can finally come back to \eqref{ch3:eq:acav}.
Henceforth we set $\bar x := \bar x_{t,b}$ to lighten notation.
We control $f(x)$ for $x \ge 2 \bar x$
as follows: since $f'(\cdot)$ is strictly
decreasing, and $f(2\bar x) \le f(\bar x)$ by \eqref{ch3:eq:maxf},
\[
 f(x)=f(2\bar x)+\int_{2\bar x}^x f'(s) \dd s 
 \leq f(\bar x)+f'(2\bar x)(x-2\bar x) \,.
\]
Observe that $f'(2\bar x) = -|f'(2\bar x)| <0$, hence
\begin{equation}\label{ch3:eq:geeo}
 \begin{split}
\sum_{j \geq 2\bar x}e^{f(j)} &\leq e^{f(\bar x)} 
 \sum_{j \geq 2\bar x}e^{-|f'(2\bar x)|(j-2\bar x)}
= \frac{e^{f(\bar x)}}{1-e^{-|f'(2\bar x)|}}. 
 \end{split}
\end{equation}
By \eqref{ch3:eq:f'}, recalling that $f'(\bar x) = 0$, we can write
\begin{equation*}
	f'(2\bar x) = f'(2\bar x) - 2^{-2D} f'(\bar x)
	= 2^{-2D} \log \bigg( \frac{\bar x}{t} \bigg)
	- \log \bigg( \frac{2\bar x}{t} \bigg)
	+ (1-2^{-2D})\log \lambda
	\to -\infty \,,
\end{equation*}
because $\bar x / t \to \infty$ by \eqref{ch3:eq:xinfi}.
Then $1-e^{-|f'(2\bar x)|}> \frac{1}{2}$ eventually and
\eqref{ch3:eq:geeo} yields
\begin{equation}\label{ch3:eq:boundtail}
\begin{split}
\sum_{j \ge 2\bar x}e^{f(j)} 
\leq 2 \, e^{f(\bar x)} \,.
\end{split}
\end{equation}
The initial part of the sum can be simply bounded by
\begin{equation}\label{ch3:eq:boundiniz}
\sum_{0 \le j < 2\bar x}
 e^{f(j)} \leq (2 \bar x+1) \, e^{f(\bar x)} \,.
\end{equation}

Looking back at \eqref{ch3:eq:acav}, we can finally write
\begin{equation}\label{ch3:eq:limsup}
\begin{split}
 \log \E \big[ e^{bI_t} \big]
 &\leq \log c_1 +  \sigma_0^2\, b\,t + \log (2\bar x + 3) + f(\bar x) \,.
\end{split}
\end{equation}
Comparing \eqref{eq:xbart} and \eqref{eq:estmax2}, we see that
$\bar x = O (f(\bar x) / \log b) = o(f(\bar x))$, because $b \to \infty$,
hence $\log (2\bar x + 3) = o(\bar x) = o(f(\bar x))$.
Again by \eqref{eq:estmax2}
we have $bt = o(\bar x) = o(f(\bar x))$, because $D <\frac{1}{2}$.
This means that the first three terms in the right hand side of \eqref{ch3:eq:limsup}
are negligible compared to $f(\bar x)$, and since $f(\bar x) \sim \tilde C \, B_{t,b}$ by
relation \eqref{eq:estmax}, we obtain
\[
 \limsup \frac{1}{B_{t,b}}
 \log\E\big[e^{bI_t}\big]\leq \tilde C \,,
\]
proving the desired upper bound in \eqref{ch3:ubound}.

\medskip
\noindent
\emph{Step 4. Lower bound.}
By \eqref{ch3:eq:It},
since $(\tau_1 - \tau_0)^{2D} \ge (-\tau_0)^{2D}$, we have
the following lower bound on $I_t$ on the event $\{N_t \ge 1\}$:
\begin{equation} \label{ch3:eq:bebo}
      	I_t \ge \sfc^2
	\Bigg\{ (t-\tau_{N_t})^{2D} +
	\sum_{k=2}^{N_t} (\tau_k - \tau_{k-1})^{2D} \Bigg\} \,.
\end{equation}
To match the upper bound, note that
H\"older's inequality \eqref{ch3:eq:Hold} becomes an equality when 
all the terms $x_k = \tau_k - \tau_{k-1}$ are equal. We can make this
approximately true 
introducing for $m\in\N$ and $\varepsilon \in (0,1)$ the event $A_m$ defined by
\begin{equation} \label{ch3:eq:Am}
	A_m := \bigg\{ \tau_1 < \varepsilon\frac{t}{m} \bigg\}
        \cap \bigcap_{i=2}^{m} 
       \bigg\{[(i-1) - \varepsilon]\frac{t}{m} < \tau_i
       <[(i-1) + \varepsilon]\frac{t}{m} \bigg\}
       \cap \big\{ \tau_{m+1} > t \big\} \,,
\end{equation}
which ensures that $N_t = m$ and $\tau_k - \tau_{k-1} \geq (1- 2\varepsilon) \tfrac{t}{m}$
for $2 \le k \le m$ and $t - \tau_{m} \geq (1-2\varepsilon) \tfrac{t}{m}$.
In particular, recalling \eqref{ch3:eq:bebo},
on the event $A_m$ we have the lower bound
\begin{equation} \label{ch3:eq:lbIXK1}
	I_t \geq \sfc^2 \, m \big((1-2\varepsilon) 
	\tfrac{t}{m} \big)^{2D}
	= (1-2\varepsilon)^{2D}
	 \, \sfc^2 \, m^{1-2D} t^{2D}\,.
\end{equation}
Since $\tau_1, \tau_2 - \tau_{1}, \tau_3 - \tau_2, \ldots$ are i.i.d.\ $Exp(\lambda)$
random variables, 
and on the event $A_m$ one has $\tau_k - \tau_{k-1} \leq (1+ 2\varepsilon) \tfrac{t}{m}$
for $2 \le k \le m$, and also $\tau_1 \leq (1+ 2\varepsilon) \tfrac{t}{m}$,
a direct estimate on the densities yields
\begin{equation} \label{ch3:eq:lbIXK2}
	\P(A_m) \ge \big( \lambda e^{-\lambda(1+2\varepsilon)
	 \frac{t}{m}} \big)^{m} 
	\big( \varepsilon \tfrac{t}{m} \big)^{m}  e^{-\lambda(1+2\varepsilon)
	 \frac{t}{m}}=
	e^{-(1+2\varepsilon)(1+\frac{1}{m})\lambda t}
	\frac{(\varepsilon \lambda t)^m}{m^m} \,,
\end{equation}
hence by \eqref{ch3:eq:lbIXK1}
\begin{equation}\label{ch3:eq:eqLBXKE}
 \E\big[ e^{bI_t} \big] \geq 
 \E\big[ e^{bI_t} \ind_{A_m} \big] \geq 
 e^{(1-2\epsilon)^{2D} \sfc^2 \, (t^{2D} b) \, m^{1-2D}}\P(A_m)
 \geq e^{\tilde f(m)}
\end{equation}
where we define $\tilde f(x)$, for $x\geq 0$ by
\begin{displaymath}
	\tilde f(x) = \tilde f_{t,b,\epsilon}(x) := (1-2\epsilon)^{2D}\, \sfc^2\, ( t^{2D}b) \, x^{1-2D}
	- x\log \frac{x}{\epsilon \lambda t} 
       -(1+2\varepsilon)(1+\tfrac{1}{m})\lambda t\,.
\end{displaymath}

Note that $\tilde f(x)$ resembles $f(x)$,
cf.\ \eqref{ch3:eq: eqdefnf}. Since the leading contribution to the upper bound was 
given by $e^{f(\bar x)}$, where $\bar x = \bar x_{b,t}$ is
the maximizer of $f(\cdot)$, it is natural to 
choose $m = \lfloor \bar x \rfloor$ in the lower bound \eqref{ch3:eq:eqLBXKE}. 
Since $\bar x \to \infty$ and $t \ll \bar x$,
cf.\ \eqref{ch3:eq:xinfi}, we have
\begin{equation*}
	\tilde f(\lfloor\bar x\rfloor) \sim. \tilde f(\bar x) \sim
	 (1-2\epsilon)^{2D}\, \sfc^2 \, (t^{2D} b) \, \bar x^{1-2D} -
	\bar x \, \log \frac{\bar x}{t} \big(1+o(1)\big)   \,,
\end{equation*}
and recalling \eqref{eq:estmax}-\eqref{eq:estmax2} we obtain
\begin{equation*}
	\tilde f(\lfloor\bar x\rfloor) \sim f(\bar x) -
      \big[1 - (1-2\varepsilon)^{2D}\big]
       \sfc^2 \, (t^{2D} b) \, \bar x^{1-2D}   \sim 
       \bigg[1 - \frac{1 - (1-2\varepsilon)^{2D}}{2D}\bigg]
       \tilde C \, B_{t,b} \,,
\end{equation*}
which coupled to \eqref{ch3:eq:eqLBXKE} yields
\[
 \liminf \frac{1}{B_{t,b}}\log \E\big[e^{bI_t}\big]
 \geq   \bigg[1 - \frac{1 - (1-2\varepsilon)^{2D}}{2D}\bigg] \, \tilde C \,.
\]
Letting $\varepsilon \to 0$ we obtain the
desired lower bound in \eqref{ch3:ubound},
completing the proof.\qed

\section{Proof of Theorem~\ref{ch3:th:tailprob} (tail probability)}
\label{ch3:sec:LDP}

In this section we prove relation \eqref{ch3:eq:gammat}
and Theorem~\ref{ch3:th:tailprob}.

\subsection{Proof of relation \eqref{ch3:eq:gammat} and of
Theorem~\ref{ch3:th:tailprob}, part \eqref{tp:d}.}
\label{ch3:eq:gammatsec}

For any $t \ge 0$,  by \eqref{ch3:equivX}
\begin{equation*}
	X_t \overset{d}{=} \sqrt{I_t} \, W_1 - \frac{1}{2} I_t \,.
\end{equation*}
Since $I_0 = 0$, a.s.\ 
one has $I_t/t = (I_t - I_0)/t \to I'_0 = \sigma_0^2$ as $t \downarrow 0$,
cf.\ \eqref{ch3:eq:It}-\eqref{ch3:eq:sigma0}. Then
\begin{equation*}
	\frac{X_t}{\sqrt{t}} 
	\overset{d}{=} \sqrt{\frac{I_t}{t}} \, W_1 - \frac{1}{2}
	\sqrt{t} \frac{I_t}{t} \xrightarrow[t\downarrow 0]{\text{a.s.}}
	\sigma_0 \, W_1 \,,
\end{equation*}
proving relation \eqref{ch3:eq:gammat}.
Relation \eqref{ch3:eq:boh} follows
from \eqref{ch3:eq:gammat}, proving
part \eqref{tp:d} in Theorem~\ref{ch3:th:tailprob}. 

\subsection{Proof of Theorem~\ref{ch3:th:tailprob}, part \eqref{tp:a}}

Recall the definition of $\bkappa_1(\cdot)$ and $\bkappa_2(\cdot)$ in \eqref{ch3:eq:gh}.
Let us fix a family of $(\kappa,t)$ with $\kappa > 0$, $t > 0$ as in item \eqref{tp:a}
of Theorem~\ref{ch3:th:tailprob}, i.e.\
\begin{equation}\label{ch3:eq:ktd}
	\begin{cases}
	\displaystyle
	\text{either } \
	t \to \infty \ \text{ and } \ \frac{\kappa}{t} \to \infty \,,\\
	\displaystyle
	\rule{0pt}{1.7em}
	\text{or } \
	t \to \bar t \in (0,\infty) \ \text{ and } \ \kappa \to\infty \,,\\
	\displaystyle
	\rule{0pt}{1.7em}
	\text{or } \
	t \to 0 \ \text{ and } \ \frac{\kappa}{t^D 
	\sqrt{\log \frac{1}{t}}} \to \infty \,.
	\end{cases}
\end{equation}
We are going to prove the following result, which is stronger than
our goal \eqref{ch3:eq:Cramer2}.

\begin{theorem}\label{ch3:th:LDP+}
For any family of values of
$(\kappa,t)$ satisfying \eqref{ch3:eq:ktd}, the random variables
$\frac{X_t}{\kappa}$ satisfy
the \emph{large deviations principle}
with rate $\alpha_{t,\kappa}$ and good rate function $I(\cdot)$ given by
\begin{equation}\label{ch3:eq:gtIx}
\begin{split}
 \alpha_{t,\kappa} & :=\left( \frac{\kappa}{\sfc \, t^D}\right)
 ^{\frac{1}{1-D}}\left(\log \frac{\kappa}{\sfc^{1/D} \,t}\right)
 ^{\frac{1/2-D}{1-D}} \,, \qquad
 I(x) := \sfC \, |x|^{\frac{1}{1-D}} 
\end{split}
\end{equation}
where $\sfC$ is defined in \eqref{ch3:eq:asf}.
This means that for every Borel set $A \subseteq \R$
\begin{equation*}
	-\inf_{x\in \mathring{A}} I(x) \le 
	\liminf \frac{1}{\alpha_{t,\kappa}} 
	\log \P\left(\frac{X_{t}}{\kappa} \in A\right)
	\le \limsup
	\frac{1}{\alpha_{t,\kappa}} \log \P\left(\frac{X_{t}}{\kappa} \in A\right) 
	\le -\inf_{x\in \overline{A}} I(x) \,,
\end{equation*}
where $\mathring{A}$ and $\overline{A}$ denote respectively
the interior and the closure of $A$. 
In particular, choosing $A = (1,\infty)$,
relation \eqref{ch3:eq:Cramer2} in Theorem~\ref{ch3:th:tailprob} holds.
\end{theorem}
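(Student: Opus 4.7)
The plan is to apply the G\"artner--Ellis Theorem to the rescaled family $X_t/\kappa$, mirroring the strategy used in the proof of Corollary~\ref{ch3:coro:LDP I} for $I_t/\kappa$. The key reduction is the ``no moment explosion'' identity of Lemma~\ref{ch3:th:noexpl}, which converts the log-MGF of $X_t$ into that of $I_t$:
\[
\log \E[e^{pX_t}] \,=\, \log \E\bigl[e^{\frac{1}{2}p(p-1)I_t}\bigr] \,.
\]
Set $p := \alpha\,\alpha_{t,\kappa}/\kappa$ for $\alpha > 0$. From the definition \eqref{ch3:eq:gtIx} one computes $\alpha_{t,\kappa}/\kappa = (\kappa/t)^{D/(1-D)} (\log(\kappa/t))^{(1/2-D)/(1-D)}$, so $p \to \infty$ in each of the three sub-regimes of~\eqref{ch3:eq:ktd}; consequently $b := \tfrac{1}{2}p(p-1) \sim p^2/2 \to \infty$, and a direct check (using $\kappa \gg \bkappa_2(t)$ in the regime $t \to 0$) shows that the pair $(t,b)$ satisfies hypothesis~\eqref{ch3:eq:assat} of Proposition~\ref{ch3:th:crucial}.

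Applying Proposition~\ref{ch3:th:crucial} and using the asymptotic $\log b \sim \tfrac{2D}{1-D}\log(\kappa/t)$ to evaluate $(\log b)^{(2D-1)/(2D)}$, one collects powers of $\kappa/t^D$ and $\log(\kappa/t)$ to obtain
\[
\frac{1}{\alpha_{t,\kappa}} \log \E[e^{pX_t}] \,\longrightarrow\, \Lambda(\alpha) := C_0\,\alpha^{1/D} \,,
\qquad
C_0 := \tilde C \cdot (1/2)^{1/(2D)} \cdot \bigl(2D/(1-D)\bigr)^{(2D-1)/(2D)} \,,
\]
with $\tilde C$ as in~\eqref{ch3:eq:keygoal0}. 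The Fenchel--Legendre transform is $\Lambda^*(x) = (1-D)(D/C_0)^{D/(1-D)}\,x^{1/(1-D)}$ for $x \ge 0$; substituting $\tilde C$ and simplifying the resulting exponents (crucially $D^{1/(2D)}(2D)^{(2D-1)/(2D)} = D\,2^{(2D-1)/(2D)}$) one recovers precisely $\Lambda^*(x) = \sfC\,x^{1/(1-D)}$ with $\sfC$ as in~\eqref{ch3:eq:asf}. The upper bound of the LDP then follows from exponential Chebyshev and optimization over $\alpha \ge 0$, exactly as in Corollary~\ref{ch3:coro:LDP I}; the footnote in that proof explains why it is enough to compute $\Lambda$ on $[0,\infty)$ for the right-tail bound.

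For the matching lower bound -- which, via the choice $A = (1,\infty)$, already yields the sharp asymptotic~\eqref{ch3:eq:Cramer2} -- one may either carry out the standard G\"artner--Ellis tilt at the optimizing $\bar\alpha > 0$, or argue more concretely by conditioning on $I_t$. Using the representation $X_t \overset{d}{=} \sqrt{I_t}\,Z - I_t/2$ with $Z \sim \mathcal{N}(0,1)$ independent of $I_t$, pick $\bar y = \bar y(\kappa,t)$ to minimize the sum $-\log \P(I_t > y) + (\kappa + y/2)^2/(2y)$, and for $\varepsilon \in (0,1)$ bound
\[
\P(X_t > \kappa) \,\ge\, \P\bigl(I_t \in [\bar y, (1+\varepsilon)\bar y]\bigr) \cdot \inf_{I \in [\bar y,(1+\varepsilon)\bar y]} \P\bigl(\sqrt I\,Z > \kappa + I/2\bigr) \,.
\]
The first factor is controlled by Corollary~\ref{ch3:coro:LDP I} (applicable precisely under~\eqref{ch3:eq:ktd}), the second by the standard Gaussian tail asymptotics, and a direct variational calculation shows that the two exponents add up to $-\sfC\,\alpha_{t,\kappa}(1+o(1))$ as $\varepsilon \downarrow 0$.

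The principal obstacle is uniformity across the three sub-regimes in~\eqref{ch3:eq:ktd}: one must check in each case that the corresponding $b$ satisfies~\eqref{ch3:eq:assat} (the delicate case being $t \to 0$ with $\kappa \gg \bkappa_2(t)$, where $bt^{2D}/\log(1/t) \to \infty$ follows from $(\kappa/t^D)^{2D/(1-D)} \gg (\log(1/t))^{D/(1-D)}$), that $\log b \sim \tfrac{2D}{1-D}\log(\kappa/t)$, and that Corollary~\ref{ch3:coro:LDP I} is applicable at $\bar y$. Once this bookkeeping is in place, the identification of the rate constant as $\sfC$ amounts to the elementary but somewhat tedious algebraic simplification outlined above.
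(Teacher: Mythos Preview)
Your approach is essentially the paper's: apply G\"artner--Ellis via the identity $\E[e^{pX_t}] = \E[e^{\frac{1}{2}p(p-1)I_t}]$ of Lemma~\ref{ch3:th:noexpl} together with Proposition~\ref{ch3:th:crucial}, and then identify the Legendre transform as $\sfC\,|x|^{1/(1-D)}$. The verification that $(t,b)$ satisfies~\eqref{ch3:eq:assat} and the asymptotic $\log b \sim \tfrac{2D}{1-D}\log(\kappa/t)$ are also as in the paper.

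There is, however, a gap in scope. The theorem asserts a \emph{full} LDP, valid for every Borel set $A \subseteq \R$, whereas you compute $\Lambda(\alpha)$ only for $\alpha > 0$ and your lower-bound arguments (whether the tilt or the conditioning on $I_t$) are tailored to the right tail $A=(1,\infty)$. The fix is trivial but should be said: for $\alpha < 0$ one has $p = \alpha\,\alpha_{t,\kappa}/\kappa \to -\infty$, hence $b = \tfrac{1}{2}p(p-1) \sim p^2/2 = \tfrac{\alpha^2}{2}(\alpha_{t,\kappa}/\kappa)^2$, and the same application of Proposition~\ref{ch3:th:crucial} gives $\Lambda(\alpha) = C_0\,|\alpha|^{1/D}$ for all $\alpha \in \R$. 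Since $1/D > 2$, this $\Lambda$ is everywhere finite and $C^1$, so the G\"artner--Ellis Theorem yields the full LDP directly, with rate function $I(x) = \sfC\,|x|^{1/(1-D)}$ --- no separate lower-bound argument is needed. This is precisely how the paper proceeds.

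Your alternative lower bound via conditioning on $I_t$ and invoking Corollary~\ref{ch3:coro:LDP I} is correct for the right tail and exposes the mechanism nicely, but it is more work than necessary and does not by itself deliver the full LDP as stated.
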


\begin{proof}
We are going to show that, with $\alpha_{t,\kappa}$ as in \eqref{ch3:eq:gtIx},
the following limit exists for $\beta\in\R$:
\begin{equation} \label{ch3:eq:Lambda}
	\Lambda(\beta) := \lim
	\frac{1}{\alpha_{t,\kappa}} \log \E[e^{\beta \alpha_{t,\kappa}
	\frac{X_t}{\kappa}}] \,,
\end{equation}
where $\Lambda: \R \to \R$ is everywhere finite and continuously differentiable.
By the G\"artner-Ellis Theorem \cite[Theorem 2.3.6]{cf:DZ},
it follows that $\frac{X_t}{\kappa}$ satisfies a LDP with good
rate $\alpha_{t,\kappa}$ and with rate function $I(\cdot)$ given
by the Fenchel-Legendre transform of $\Lambda(\cdot)$, i.e.
\begin{equation} \label{ch3:eq:Legendre}
	I(x) = \sup_{\beta \in \R} \big\{ \beta x - \Lambda(\beta) \big\} \,.
\end{equation}
The proof is thus reduced to
computing $\Lambda(\beta)$ and then showing that $I(x)$ coincides
with the one given in \eqref{ch3:eq:gtIx}.
Recalling \eqref{ch3:eq:noexpl},
the determination of $\Lambda(\beta)$ in 
\eqref{ch3:eq:Lambda} is reduced to the asymptotic 
behaviour of exponential moments of $I_t$.
This is possible by Proposition~\ref{ch3:th:crucial}.

Fix a family of values of $(\kappa,t)$ 
satisfying \eqref{ch3:eq:ktd} and note that $\alpha_{t,\kappa}$
in \eqref{ch3:eq:gtIx} satisfies
\begin{equation*}
	\alpha_{t,\kappa} \to \infty \,, \qquad
	\frac{\alpha_{t,\kappa}}{\kappa}
	= \left( \frac{\kappa}{ \sfc^{1/D} \, t}\right)^{\frac{D}{1-D}}
 \left(\log \frac{\kappa}{ \sfc^{1/D} \, t}\right)^{\frac{1/2-D}{1-D}} \to \infty \,.
\end{equation*}
For fixed $\beta \in\R \setminus \{0\}$ we set
\begin{equation}\label{ch3:eq:bbb}
	 b = b_{t,\kappa}:= \frac{1}{2} \, \beta\frac{\alpha_{t,\kappa}}{\kappa}
 \left(\beta\frac{\alpha_{t,\kappa}}{\kappa}-1\right)
 \sim \frac{\beta^2}{2} \bigg(\frac{\alpha_{t,\kappa}}{\kappa}\bigg)^2 \to \infty \,.
\end{equation}
In order to check the second condition in \eqref{ch3:eq:assat}, note that if $t \to 0$
\begin{equation*}
 \begin{split}
 \frac{b}{\frac{1}{t^{2D}}\log \frac{1}{t}} \sim
 \frac{\beta^2}{2}\left( \frac{\kappa}{ \sfc^{1/D} \, t^D \sqrt{\log\frac{1}{t}}}\right)
 ^{\frac{2D}{1-D}}
 \left(\frac{\log \frac{\kappa}{ \sfc^{1/D} \, t}}{\log \frac{1}{t}} \right)^{\frac{1-2D}{1-D}}
 \to \infty \,,
 \end{split}
\end{equation*}
again by \eqref{ch3:eq:ktd}.
Applying \eqref{ch3:eq:keygoal0}, by \eqref{ch3:eq:noexpl} and \eqref{ch3:eq:bbb} we get
\begin{equation*}
\begin{split}
\log \E\big[ & e^{\beta \alpha_{t,\kappa} \frac{X_t}{\kappa}} \big] =
	\log \E[e^{b_{t,k} I_t}] \sim 
	\tilde C\, \big(  \sfc^{1/D} \, t\big) 
	\, b_{t,k}^{\frac{1}{2D}}(\log b_{t,k})^{\frac{2D-1}{2D}} \\
	&\sim \tilde C \,
	\big( \sfc^{1/D} \,  t \big) \, \bigg(\frac{\beta^2}{2}\bigg)^{\frac{1}{2D}}
	\left( \frac{\kappa}{ \sfc^{1/D} \, t}\right)^{\frac{1}{1-D}}
 \left(\log \frac{\kappa}{ \sfc^{1/D} \, t}\right)^{\frac{1-2D}{2D(1-D)}}
  \bigg(\frac{2D}{1-D}\log \frac{\kappa}{ \sfc^{1/D} \, t} \bigg)^{\frac{2D-1}{2D}}
	\\
	& = 
	D \, \bigg(\frac{
	(1-2D) (1-D)}{2}\bigg)^{\frac{1-2D}{2D}} \,
	|\beta|^{\frac{1}{D}} \,
	\alpha_{t,\kappa} \,,
\end{split}
\end{equation*}
where in the last step we have used the definitions \eqref{ch3:eq:gtIx},
\eqref{ch3:eq:keygoal0} of $\alpha_{t,\kappa}$ and $\tilde C$.

This shows that the limit \eqref{ch3:eq:Lambda} exists with
\begin{align*}
 \Lambda(\beta)&=\hat C\, |\beta|^{\frac{1}{D}} \,, \qquad \text{and} \qquad
 \hat C = D \, \bigg(\frac{
	(1-2D) (1-D)}{2}\bigg)^{\frac{1-2D}{2D}} \,.
\end{align*}
To determine the rate function $I(x)$ in \eqref{ch3:eq:Legendre} we have 
to maximize over $\beta \in \R$ the function
\begin{equation*}
	h(\beta):=\beta x- \Lambda (\beta) \,.
\end{equation*}
Since $h'(\beta)=x-\Lambda'(\beta)=
x-\frac{1}{D}\hat C \sign(\beta)|\beta|^{\frac{1}{D}-1}$,
the only solution to $h'(\bar \beta)=0$ is
\[
 \bar \beta = \bar \beta_x= \sign (x)\left(\frac{D|x|}{\hat C}\right)^{\frac{D}{1-D}}
\]
and consequently 
\[
\begin{split}
	 I(x) & = h(\bar \beta_x)= \bar \beta_x\, x-\Lambda
	 ( \bar \beta_x) = 
	 |x|^{\frac{1}{1-D}} \bigg(\frac{D}{\hat C} \bigg)^{\frac{D}{1-D}}
	 (1- D ) = \sfC \, |x|^{\frac{1}{1-D}} \,,
\end{split}
\]
where $\sfC$ is the constant defined in \eqref{ch3:eq:asf}.
Having shown that $I(x)$ coincides
with the one given in \eqref{ch3:eq:gtIx}, the proof of 
Theorem~\ref{ch3:th:LDP+} is completed.
\end{proof}

\subsection{Technical interlude}

Let us give some estimates on $\P(X_t>\kappa|N_t=m)$.
Recall the definition \eqref{ch3:eq:It} of the time-change process $I_t$.
On the event $\{N_t=0\}$ we have
\begin{equation*}
	I_t = (t-\tau_0)^{2D} - (-\tau_0)^{2D} \underset{t\downarrow 0}{\sim} \sigma_0^2 \, t \,,
\end{equation*}
where $\sigma_0^2$ is defined in \eqref{ch3:eq:sigma0}. Then,
by the definition \eqref{ch3:equivX} of $X_t$, 
as $t \to 0$ and for $\kappa \gg \sqrt{t}$,
\begin{equation*}
 \begin{split}
 \P(X_t>\kappa|N_t=0)
 &=\P\left(W_1 > \frac{\kappa}{\sqrt{I_t}} + \frac{1}{2} \sqrt{I_t} \bigg| N_t=0 \right) 
 = \P\left(W_1 > \frac{\kappa}{\sqrt{I_t}} \big(1+o(1)\big)\bigg| N_t=0 \right)  \\
 &= 1 - \Phi\bigg(
 \frac{\kappa}{\sigma_0\sqrt{t}}
  \big(1+o(1)\big) \bigg) 
 = \exp\bigg( - \frac{\kappa^2}{2 \, \sigma_0^2 \, t} \big(1+o(1)\big) \bigg) \\
  & = \exp\bigg( - \frac{1}{2} \bigg(\frac{\kappa}{\bkappa_1(\sigma_0^2 \, t)}\bigg)^2
	\log \frac{1}{ \sigma_0^2 \, t} \big(1+o(1)\big)\bigg)
	\,,
 \end{split}
\end{equation*}
where $\Phi(z) = \P(W_1 \le z)$ and
we have used the standard estimate $\log(1-\Phi(z)) \sim -\frac{1}{2}z^2$ as $z\to\infty$,
together with the definition \eqref{ch3:eq:gh} of $\bkappa_1(\cdot)$.
We can rewrite the previous relation as:
\begin{equation}\label{ch3:eq:Pevento0}
 \begin{split}
 \text{for } t \to 0, \ \kappa \gg \sqrt{t}: \qquad
 \P(X_t>\kappa \,|\, N_t=0)
 = \big( \sigma_0^2 \, t \big)^{ \frac{1}{2} \big(\frac{\kappa}
 {\bkappa_1(\sigma_0^2\, t)} \big)^2
	(1+o(1)) }
	\,.
 \end{split}
\end{equation}

On the other hand, on the event $\{N_t=m\}$ with $m \ge 1$,
we claim that
\begin{equation} \label{ch3:eq:genub}
\begin{split}
	& \text{for fixed } m \ge 1, \ \text{ for } t \to 0, \ \kappa \gg t^D: \\
	& \P(X_t >\kappa \,|\, N_t=m) =
	\big( \sfc^{1/D} \, t \big)^{\frac{1}{2 \, m^{1-2D}}
  \big(\frac{\kappa}{\bkappa_2(\sfc^{1/D} \, t)}\big)^2 
  (1+o(1))} \,.
\end{split}
\end{equation}
We first prove an upper bound. Applying \eqref{ch3:eq:eqItupfin},
on $\{N_t=m\}$ with $m \ge 1$ we have
\begin{equation*}
	I_t \le \sigma_0^2 \, t \,+\, N_t^{1-2D} (\sfc^{1/D} \, t)^{2D}
	= \sigma_0^2 \, t \,+\, m^{1-2D} (\sfc^{1/D} \, t)^{2D}
	\underset{t\downarrow 0}{\sim} 
	m^{1-2D} (\sfc^{1/D} \, t)^{2D} \,.
\end{equation*}
In analogy with the previous estimates, for $\kappa \gg t^D$ 
we have $\frac{\kappa}{\sqrt{I_t}} + \frac{1}{2} \sqrt{I_t}
= \frac{\kappa}{\sqrt{I_t}}(1+o(1))$ and
we get the upper bound
\begin{equation*}
\begin{split}
 \P(X_t >\kappa | N_t=m) & \leq 
 1 - \Phi\bigg(
	\frac{\kappa}{ (\sfc^{1/D} \, t)^D \, m^{\frac{1}{2}-D}}
  \big(1+o(1)\big) \bigg) \\
  & = \exp\bigg( -\frac{1}{2 \, m^{1-2D}}
  \bigg(\frac{\kappa}{\bkappa_2(\sfc^{1/D} \, t)}\bigg)^2 
  \log\frac{1}{ \sfc^{1/D} \, t}
  \big(1+o(1)\big)  \bigg)
  \,,
\end{split}
\end{equation*}
having used the definition \eqref{ch3:eq:gh} of $\bkappa_2(\cdot)$.
This proves half of \eqref{ch3:eq:genub}.

For a lower bound, we argue as in the proof 
of Proposition~\ref{ch3:th:crucial}:
for any $\epsilon > 0$, on the event $A_m \subseteq \{N_t = m\}$
defined in \eqref{ch3:eq:Am}, with $m \ge 1$, one has the lower bounds \eqref{ch3:eq:lbIXK1}
on $I_t$ and \eqref{ch3:eq:lbIXK2} on $\P(A_m)$. 
Then, using $(1+\frac{1}{m}) \le 2$, for $\kappa \gg t^D$ we get
\begin{equation*} 
\begin{split}
	 &\P( X_t >\kappa  | N_t = m) \ge
	  \P(X_t>\kappa | A_m) \frac{\P(A_m)}{\P(N_t=m)} \\
	  & \geq \Bigg(1 - \Phi\bigg(
	\frac{\kappa}{(1-2\epsilon)^{D} \,
	m^{\frac{1}{2}-D} (\sfc^{1/D} \, t)^D}
  \big(1+o(1)\big) \bigg) \Bigg) \, e^{-[(1+2\varepsilon)(1+\frac{1}{m})-1]\lambda t} 
  \epsilon^m 
  \frac{m!}{m^m} \\
  & \ge \exp\bigg( -\frac{1}{2 (1-2\epsilon)^{2D} \, m^{1-2D}}
  \bigg(\frac{\kappa}{\bkappa_2(\sfc^{1/D} \, t)}\bigg)^2 
  \log\frac{1}{\sfc^{1/D} \, t}
  \big(1+o(1)\big)  \bigg) e^{-(1+4\epsilon)\lambda t} \epsilon^m 
  \frac{m!}{m^m}  
  \,.
\end{split}
\end{equation*}
Since $e^{-(1+4\epsilon)\lambda t} \to 1$ and $\epsilon^m \frac{m!}{m^m}
= o(\log \frac{1}{\sfc^{1/D} \, t})$ as $t \to 0$ (for fixed $m$), we obtain
\begin{equation*}
	\frac{\log \P( X_t >\kappa  | N_t = m)}{\log \frac{1}{\sfc^{1/D} \, t}}
	\ge -\frac{1}{2 (1-2\epsilon)^{2D} \, m^{1-2D}}
  \bigg(\frac{\kappa}{\bkappa_2( \sfc^{1/D} \, t)}\bigg)^2 
  \big(1 + o(1) \big) \,.
\end{equation*}
Since $\epsilon > 0$ is arbitrary, we have proved the second half of \eqref{ch3:eq:genub}.

Finally, we need a weaker version of \eqref{ch3:eq:genub} when $\kappa = O(t^D)$:
\begin{equation} \label{ch3:eq:genlbweak}
\begin{split}
	&\text{for fixed } m \ge 1, \ \text{ for } t \to 0, \ \kappa = O(t^D):
	\qquad \P(X_t >\kappa \,|\, N_t=m) =
	\big( \sfc^{1/D} \, t \big)^{o(1)} \,.
\end{split}
\end{equation}
It is enough to show that $\P( X_t >\kappa  | N_t = m) \ge c_3$,
for some constant $c_3 = c_3(D,m) > 0$, because this would imply
\begin{equation*}
	0 \ge \frac{\log \P( X_t >\kappa  | N_t = m)}{\log \frac{1}{\sfc^{1/D} \, t}}
	\ge \frac{\log c_3}{\log \frac{1}{\sfc^{1/D} \, t}} = o(1) \,,
\end{equation*}
hence \eqref{ch3:eq:genlbweak} follows.
We fix $\epsilon > 0$ arbitrarily, and note that
on the event $A_m \subseteq \{N_t = m\}$ in \eqref{ch3:eq:Am} we have $I_t \ge (c_1 \, t^{D})^2$
by \eqref{ch3:eq:lbIXK1}
(for a suitable $c_1 > 0$ depending on $\epsilon, D, m$). Then, for $\kappa \le c_2 \, t^D$ we have
$\kappa / \sqrt{I_t} \ge c_2/c_1$, and since $\lim_{t\to 0} I_t = 0$,
for small $t$ we get our goal:
\begin{equation*}
\begin{split}
	\P( X_t >\kappa  | N_t = m) & \ge
	\P\left(W_1 > \frac{\kappa}{\sqrt{I_t}} + \frac{1}{2} \sqrt{I_t} 
	\,\bigg|\, A_m \right) \frac{\P(A_m)}{\P(N_t=m)} \\
	& \ge \P\left(W_2 > \frac{c_2}{c_1} + 1 \right)
	e^{-(1+4\epsilon)\lambda t} \epsilon^m \frac{m!}{m^m} \ge c_3 > 0 \,.
\end{split}
\end{equation*}

\subsection{Proof of Theorem~\ref{ch3:th:tailprob}, part \eqref{tp:c}}

We start with some general considerations.
We are in the regime $t\to 0$, hence $e^{-\lambda t} \ge \frac{1}{2}$ for small $t$.
For every $M \in\N_0$, since $N_t \sim Pois(\lambda t)$, we have the lower bound
\begin{equation}\label{eq:lb}
\begin{split}
	\P(X_t>\kappa) & \ge \sum_{m=0}^{M}\P(X_t>\kappa| N_t=m) \,
	e^{-\lambda t} \frac{(\lambda t)^m}{m!}	\\
	& \ge \frac{1}{2 \, M!} \, \max_{m \in \{0,\ldots, M\}} 
	\Big\{ \P(X_t>\kappa| N_t=m) \, (\lambda t)^m \Big\} \,.
\end{split}
\end{equation}
To get a similar upper bound, note that
\begin{equation*}
	\P(N_t \ge M+1) = \sum_{k=M+1}^\infty e^{-\lambda t} \, \frac{(\lambda t)^k}{k!}
	\le (\lambda t)^{M+1} \,,
\end{equation*}
hence for every $M\in\N_0$
\begin{equation}\label{eq:ub}
\begin{split}
	\P(X_t>\kappa) & \le \sum_{m=0}^{M}\P(X_t>\kappa| N_t=m) \,
	e^{-\lambda t} \frac{(\lambda t)^m}{m!} \,+\, (\lambda t)^{M+1}\\
	& \le \max_{m \in \{0,\ldots, M\}} 
	\Big\{ \P(X_t>\kappa| N_t=m) \, (\lambda t)^m \Big\} \,+\, (\lambda t)^{M+1}\,.
\end{split}
\end{equation}
It remains to evaluate the maximum of $\{\P(X_t>\kappa| N_t=m) \, (\lambda t)^m\}$.

We can now prove part~\eqref{tp:c} of Theorem~\ref{ch3:th:tailprob}.
Fix a family of values of $(\kappa,t)$ with
\begin{equation}\label{ch3:eq:eheheh}
	t \to 0 \,, \qquad
	\sqrt{ \sigma_0^2 \, t} \ll \kappa \le \sqrt{2} \, \bkappa_1( \sigma_0^2 \, t) \,.
\end{equation}
Then, recalling \eqref{ch3:eq:Pevento0}, relations \eqref{eq:lb} and \eqref{eq:ub}
with $M=0$ yield
\begin{equation*}
	\frac{1}{2} \, \big( \sigma_0^2 \, t \big)^{ \frac{1}{2} \big(\frac{\kappa}
	{\bkappa_1(\sigma_0^2\, t)} \big)^2(1+o(1)) }
	\le \P(X_t > k) \le \big( \sigma_0^2 \, t \big)^{ \frac{1}{2} \big(\frac{\kappa}
	{\bkappa_1(\sigma_0^2\, t)} \big)^2(1+o(1)) }
	\,+\, (\lambda t) \,,
\end{equation*}
and note that last term $(\lambda t)$ is negligible, because
$\frac{1}{2} \big(\frac{\kappa}{\bkappa_1(\sigma_0^2\, t)} \big)^2 \le 1$
by \eqref{ch3:eq:eheheh}. Taking logs, we see that relation \eqref{ch3:eq:withoutfa2} is proved.

\subsection{Proof of Theorem~\ref{ch3:th:tailprob}, part \eqref{tp:b}}

Following Remark~\ref{ch3:rem:tpbexplicit}, we split this case in two:
\begin{itemize}
\item first we consider a family of values of $(\kappa,t)$ with
\begin{equation}\label{ch3:eq:eheheh2}
	t \to 0 \,, \qquad \sqrt{2} \, \bkappa_1(\sigma_0^2 \, t) <
	\kappa \ll \bkappa_2( \sfc^{1/D} \, t) \,,
\end{equation}
and our goal is to prove \eqref{ch3:eq:withoutfa1};

\item afterwards we will consider the regime
\begin{equation}\label{ch3:eq:samefam2}
	\,\kappa \sim a\, \bkappa_2( \sfc^{1/D} \, t) \,,
	\qquad \text{for some} \quad a \in (0,\infty) \,.
\end{equation}
and our goal is to prove \eqref{ch3:eq:withfa}.
\end{itemize}
By a subsequence argument, these cases prove relation \eqref{ch3:eq:gentail}
and hence part \eqref{tp:b}.

\smallskip

Let us assume \eqref{ch3:eq:eheheh2}. Then for $m\ge 1$ we have
$\P(X_t >\kappa \,|\, N_t=m) = \big( \sfc^{1/D} \, t \big)^{o(1)}$,
by either \eqref{ch3:eq:genlbweak} (in case $\kappa = O(t^D)$)
or \eqref{ch3:eq:genub} (in case $t^D \ll \kappa \ll \bkappa_2(\sfc^{1/D} \, t)$). Then
\begin{equation*}
	\P(X_t >\kappa \,|\, N_t=1) \, (\lambda t)^1 = (\lambda t)^{1+o(1)} \,,
\end{equation*}
while by \eqref{ch3:eq:Pevento0}
\begin{equation*}
	\P(X_t >\kappa \,|\, N_t=0) \, (\lambda t)^0 = 
	\big( \sigma_0^2 \, t \big)^{ \frac{1}{2} \big(\frac{\kappa}
	{\bkappa_1(\sigma_0^2\, t)} \big)^2(1+o(1)) } \,.
\end{equation*}
Note that $\frac{1}{2} \big(\frac{\kappa}{\bkappa_1(\sigma_0^2\, t)} \big)^2 > 1$
in our regime, cf.\ \eqref{ch3:eq:eheheh2}, hence if we apply
relations \eqref{eq:lb} and \eqref{eq:ub}
with $M=1$ the maximum therein is attained for $m=1$ and we obtain
\begin{equation*}
\begin{split}
	\frac{1}{8} \, (\lambda \, t )^{1+o(1)}
	\le \P(X_t > k) \le (\lambda \, t )^{1+o(1)}
	\,+\, (\lambda t)^2 \,.
\end{split}
\end{equation*}
Taking logs, we have proved that relation \eqref{ch3:eq:withoutfa1} holds.

\smallskip

Next we assume \eqref{ch3:eq:samefam2}. Since $\kappa \gg t^D$, we can apply 
\eqref{ch3:eq:genub}, which yields
\begin{equation} \label{eq:repp}
\begin{split}
	\P(X_t >\kappa \,|\, N_t=m) \, (\lambda \, t)^m & =
	\big( \sfc^{1/D} \, t \big)^{\frac{a^2}{2 \, m^{1-2D}} (1+o(1))} \, (\lambda \, t)^m \\
	& = \big( \lambda \, t \big)^{m + \frac{\tilde a^2}{2 \, m^{1-2D}} + o(1)} \,, \\ 
\end{split}
\end{equation}
where $\tilde a$ is defined in \eqref{ch3:eq:withfa}.
(Since $\tilde a \to a$ as $t\to 0$, we could actually replace $\tilde a$ by $a$ in \eqref{eq:repp},
because the difference can be absorbed in the $o(1)$ term; however, keeping $\tilde a$
gives a more accurate numerical approximation.)

Let $\bar m = \bar m_{\tilde a} \in \N$ be the value for which
the minimum in the definition \eqref{ch3:eq:f} of $\sff(\tilde a)$ is attained, i.e.\
\begin{equation} \label{ch3:eq:fmax}
	\sff(\tilde a) = \sff_{\bar m}(\tilde a) =
	\bar m + \frac{\tilde a^2}{2 \, \bar m^{1-2D}} \,.
\end{equation}
If we choose $M > \bar m$, relations \eqref{eq:lb} and \eqref{eq:ub} yield
\begin{equation*}
	\frac{1}{2 \, M!} \, \big( \lambda \, t \big)^{\sff(\tilde a)}
	\le \P(X_t > k) \le \big( \lambda \, t \big)^{\sff(\tilde a)} \,+\, (\lambda t)^{M+1} \,.
\end{equation*}
We may assume that $M$ is large enough, so that $M+1 > \sff(\tilde a)$,
hence the term $(\lambda t)^{M+1}$
is negligible. Taking logs, we see that relation \eqref{ch3:eq:withfa} holds.

\section{Proof of Theorem~\ref{ch3:lemma:calltgamma} (option price)}
\label{ch3:sec:lemma:calltgamma}

In this section we prove Theorem~\ref{ch3:lemma:calltgamma}, or more precisely
we derive it from Theorem~\ref{ch3:th:tailprob} 
(which is proved in Section~\ref{ch3:sec:LDP}). This is based
on the results recently obtained in \cite{cf:CC} that link
tail probability and option price asymptotics, that we now summarize.

\subsection{From tail probability to option price}
\label{ch3:sec:reminders}
In this subsection $(X_t)_{t\ge 0}$ denotes a generic stochastic process,
representing the risk-neutral log-price, such that
$(e^{X_t})_{t\ge 0}$ is a martingale.
In order to determine the asymptotic behavior of the call price $c(\kappa,t) =
\E[(e^{X_t}-e^\kappa)^+]$ along a given family of values of $(\kappa,t)$
with $\kappa > 0$, $t > 0$,
we need some assumptions.
We start with the regime of \emph{atypical deviations}, i.e.\ we consider a family
of $(\kappa,t)$ such that $\P(X_t>\kappa) \to 0$. 
 
\begin{hypothesis}\label{ch2:ass:rv}
Along the family of $(\kappa,t)$ under consideration,
one has $\P(X_t>\kappa)\to 0$ and for every fixed $\rho \in [1,\infty)$ 
the following limit exists in $[0,+\infty]$:
\begin{equation} \label{ch2:eq:rv}
\begin{split}
	I_+(\rho) := \lim \frac{\log \P(X_t >\rho\kappa)}{\log \P(X_t > \kappa)} \,,
	\qquad \text{and moreover} \qquad \lim_{\rho\downarrow 1} I_+(\rho) = 1 \,.
\end{split}
\end{equation}
\end{hypothesis}

\noindent
We also need to formulate some moment conditions.
The first condition is
\begin{equation}\label{ch2:eq:moment}
	\forall \eta \in (0,\infty): \qquad \limsup\, \E[e^{(1+\eta)X_t}] < \infty \,,
   \end{equation}
where the limit is taken along the given family of $(\kappa,t)$
(however, only $t$ enters in \eqref{ch2:eq:moment}).
Note that if $t$ is bounded from above, say $t \le T$, it suffices
to require that
\begin{gather} \label{ch2:eq:momentsimple}
	\forall \eta \in (0,\infty): \qquad\E[e^{(1+\eta)X_T}] < \infty \,,
\end{gather}
because $(e^{(1+\eta)X_t})_{t\ge 0}$ is a submartingale
and consequently $\E[e^{(1+\eta)X_t}] \le \E[e^{(1+\eta)X_T}]$.
The second moment condition, to be applied 
when $t \to 0$ and $\kappa \to 0$, is
\begin{equation}\label{ch2:eq:moment0p}
	\exists C \in (0,\infty): \qquad
\E [e^{2 X_t} ] \le 1 + C \kappa^2 \,.
\end{equation}
\noindent
(We have stated the moment assumptions \eqref{ch2:eq:moment}
and \eqref{ch2:eq:moment0p} in a form that is enough for our purposes, but
they can actually be weakened, 
as we showed in \cite{cf:CC}.)

The next theorem, taken from \cite[Theorem 2.3]{cf:CC},
links the tail probability $\P(X_t > \kappa)$ and the option price $c(\kappa,t)$
in the regime of atypical deviations, generalizing \cite{cf:BF}.

\begin{theorem}\label{ch2:th:main2b}
Consider a risk-neutral log-price $(X_t)_{t\ge 0}$ and a
family of values of $(\kappa,t)$ with $\kappa > 0$, $t > 0$ such that
Hypothesis~\ref{ch2:ass:rv} is satisfied.
\begin{itemize}
\item In case $\liminf\kappa > 0$ and $\limsup t < \infty$,
if the moment condition \eqref{ch2:eq:moment} hold, then
\begin{equation}\label{ch2:eq:ma1c}
\begin{split}
	\log c(\kappa,t) & \sim  \log \P(X_t > \kappa) + \kappa \,.
\end{split}
\end{equation}

\item In case $\kappa \to 0$ and $t \to 0$,
if the moment condition \eqref{ch2:eq:moment0p} holds, and if in addition
\begin{equation}\label{ch2:eq:Iplus}
 \lim_{\rho \to +\infty}I_{+}(\rho )=+\infty \,,
\end{equation}
then
\begin{align} \label{ch2:eq:ma2c}
	\log \big( c(\kappa,t) / \kappa \big) & \sim  \log \P(X_t > \kappa) \,.
\end{align}
\end{itemize}
\end{theorem}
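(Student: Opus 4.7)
The plan is to derive matching upper and lower bounds on $c(\kappa,t)$ in terms of $\P(X_t>\kappa)$ and $\P(X_t>\rho\kappa)$ for $\rho>1$, then pass to the limit using $I_+(\rho)\to 1$ from Hypothesis~\ref{ch2:ass:rv}. The cornerstone is the identity
\begin{equation*}
	c(\kappa,t) \;=\; \E\big[(e^{X_t}-e^\kappa)^+\big] \;=\; \int_\kappa^\infty e^x \, \P(X_t>x)\,\dd x,
\end{equation*}
from which a lower bound follows by restricting the integration to $x\in(\kappa,\rho\kappa)$, giving $c(\kappa,t)\ge (e^{\rho\kappa}-e^\kappa)\,\P(X_t>\rho\kappa)$. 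Taking $\log$, invoking $\log\P(X_t>\rho\kappa)\sim I_+(\rho)\log\P(X_t>\kappa)$ via \eqref{ch2:eq:rv}, and treating the prefactor $e^{\rho\kappa}-e^\kappa$ according to the regime (asymptotically $\sim e^{\rho\kappa}$ when $\kappa\to\infty$, and $\sim(\rho-1)\kappa\,e^\kappa$ when $\kappa\to 0$), then letting $\rho\downarrow 1$, yields the $\liminf$ sides of both \eqref{ch2:eq:ma1c} and \eqref{ch2:eq:ma2c}.

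For the matching upper bound I would split the integral:
\begin{equation*}
	c(\kappa,t) \;\le\; (e^{\rho\kappa}-e^\kappa)\,\P(X_t>\kappa) \;+\; \int_{\rho\kappa}^\infty e^x\,\P(X_t>x)\,\dd x.
\end{equation*}
The first summand is handled exactly as in the lower bound. For the tail, I plan to apply Markov's inequality $\P(X_t>x)\le \E[e^{(1+\eta)X_t}]\,e^{-(1+\eta)x}$ based on \eqref{ch2:eq:moment}, obtaining a bound of the form $(C_\eta/\eta)\,e^{-\eta\rho\kappa}$ with $C_\eta:=\E[e^{(1+\eta)X_t}]$. In case 1, using the universal inequality $-\log\P(X_t>\kappa)\ge \kappa$ (valid since $\E[e^{X_t}]=1$), a sufficiently large choice of $\eta$ makes this tail contribution logarithmically dominated by the leading term $e^{\rho\kappa}\P(X_t>\kappa)$; letting $\rho\downarrow 1$ then yields \eqref{ch2:eq:ma1c}.

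The main obstacle is the upper bound in case 2, since $\kappa\to 0$ renders the Markov tail $e^{-\eta\rho\kappa}\to 1$ useless, while the desired bound is of order $\kappa\,\P(X_t>\kappa)$. To overcome this, I would split the tail further at a large level $R\kappa$, estimating $\int_{\rho\kappa}^{R\kappa}e^x\P(X_t>x)\,\dd x\le e^{R\kappa}\,\P(X_t>\rho\kappa)$ and using Markov for $\int_{R\kappa}^\infty$; optimizing over $R$ via a H\"older-type balance and exploiting the stronger moment bound \eqref{ch2:eq:moment0p} (which forces $\E[(e^{X_t}-1)^2]\le C\kappa^2$, a strong concentration of $e^{X_t}$ near $1$), one arrives at a tail bound of the form $C\,\P(X_t>\rho\kappa)^{\eta/(1+\eta)}$. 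The hypothesis \eqref{ch2:eq:Iplus} that $I_+(\rho)\to\infty$ as $\rho\to\infty$ is essential at this step: it forces $\P(X_t>\rho\kappa)$ to decay faster than any prescribed power of $\P(X_t>\kappa)$, so that after dividing by $\kappa$ the tail becomes log-negligible relative to $\P(X_t>\kappa)$. A final $\rho\downarrow 1$ in the leading term completes the proof of \eqref{ch2:eq:ma2c}.
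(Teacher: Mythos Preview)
The paper does not supply its own proof of Theorem~\ref{ch2:th:main2b}: it is quoted verbatim from the companion paper \cite{cf:CC} (see the sentence introducing the statement, ``The next theorem, taken from \cite[Theorem~2.3]{cf:CC}\ldots''), so there is no in-paper argument to compare against. That said, your outline is exactly the standard route followed in \cite{cf:CC} and, before that, in \cite{cf:BF}: represent $c(\kappa,t)=\int_\kappa^\infty e^x\,\P(X_t>x)\,\dd x$, bound below by restricting to $(\kappa,\rho\kappa)$, bound above by splitting the integral and controlling the far tail via a moment estimate, then close by sending $\rho\downarrow 1$ through Hypothesis~\ref{ch2:ass:rv}. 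Your identification of the case-2 upper bound as the delicate step, and of the precise roles of \eqref{ch2:eq:moment0p} and \eqref{ch2:eq:Iplus}, is spot on; note that the paper itself uses the very same Cauchy--Schwarz-plus-\eqref{ch2:eq:moment0p} device in its ``bare-hands'' estimates, cf.\ \eqref{ch3:eq:est30}--\eqref{ch3:eq:est3}.

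One point to tighten in your case-1 upper bound: the pure Markov tail estimate $\int_{\rho\kappa}^\infty e^x\P(X_t>x)\,\dd x\le (C_\eta/\eta)\,e^{-\eta\rho\kappa}$ does not, by itself, compare favourably with the leading term $e^{\rho\kappa}\P(X_t>\kappa)$ without extra information linking $\log\P(X_t>\kappa)$ to $\kappa$ (the universal bound $\log\P\le -\kappa$ goes the wrong way for this comparison, and when $\kappa$ stays bounded the Markov tail is merely $O(1)$). The remedy, which is what \cite{cf:CC} actually does, is to use H\"older rather than Markov on the tail: $\E\big[e^{X_t}\ind_{\{X_t>\rho\kappa\}}\big]\le \E\big[e^{(1+\eta)X_t}\big]^{1/(1+\eta)}\,\P(X_t>\rho\kappa)^{\eta/(1+\eta)}$, so that a power of the tail probability reappears on the right and the $\rho,\eta$ limits can be taken in the correct order.
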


Next we discuss the case of \emph{typical deviations},
i.e.\ we consider a family
of values of $(\kappa,t)$ such that $\kappa \to 0$, $t \to 0$
in such a way that $\P(X_t>\kappa)$ is bounded away from zero.
In this case we assume the convergence in distribution of $X_t$, suitably
rescaled, as $t \to 0$.

\begin{hypothesis} \label{ch2:ass:smalltime}
There is a positive function $(\gamma_t)_{t > 0}$ with
$\lim_{t\downarrow 0}\gamma_t = 0$ such
that $X_t / \gamma_t$ converges in law as $t \downarrow 0$ to
some random variable $Y$:
\begin{equation} \label{ch2:eq:scaling}
	\frac{X_t}{\gamma_t} \xrightarrow[t\downarrow 0]{d} Y \,.
\end{equation}
\end{hypothesis}

\noindent
The next result is \cite[Theorem 2.7]{cf:CC}.

\begin{theorem}\label{ch2:th:main2a}
Assume that Hypothesis~\ref{ch2:ass:smalltime} is satisfied,
and moreover the moment condition \eqref{ch2:eq:moment0p} holds
with $\kappa = \gamma_t$, i.e.
\begin{equation}\label{ch2:eq:assX}
\exists C \in (0, \infty):  \quad
\E\left[e^{2X_t}\right] < 1+ C \gamma_t^2\,.
\end{equation}
Consider a family of values of $(\kappa,t)$ such that
$t \to 0$ and $\kappa \sim a \gamma_t$, with $a \in [0,\infty)$
(in case $a=0$, we mean $\kappa = o(\gamma_t)$). Then,
assuming that $\P(Y > a) > 0$, one has
\begin{equation} \label{ch2:eq:astypical}
	c(\kappa, t) \sim \gamma_t \, \E[(Y-a)^+]  \,.
\end{equation}
\end{theorem}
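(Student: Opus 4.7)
The plan is to prove Theorem~\ref{ch2:th:main2a} by reducing the call price $c(\kappa,t) = \E[(e^{X_t} - e^\kappa)^+]$ to its linearization $\E[(X_t - \kappa)^+]$ up to a negligible error, and then passing to the limit by combining Hypothesis~\ref{ch2:ass:smalltime} with a uniform integrability bound extracted from the moment condition~\eqref{ch2:eq:assX}. The starting point is the pointwise identity
\begin{equation*}
(e^u - e^v)^+ \;=\; e^v (u-v)^+ \;+\; e^v \bigl( e^{(u-v)^+} - 1 - (u-v)^+ \bigr), \qquad u, v \in \R,
\end{equation*}
which is immediate on $\{u \le v\}$ (both sides vanish) and on $\{u>v\}$ is a rearrangement of Taylor's theorem. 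Applying it to $u = X_t$, $v = \kappa$ and taking expectations yields
\begin{equation*}
c(\kappa,t) \;=\; e^\kappa\,\E[(X_t-\kappa)^+] \;+\; e^\kappa\,\E\bigl[e^{(X_t-\kappa)^+} - 1 - (X_t-\kappa)^+\bigr].
\end{equation*}

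For the \emph{error term}, I would use the inequality $e^w - 1 - w \le \tfrac12 (e^w - 1)^2$ for $w \ge 0$ (both sides vanish at $w=0$ and the derivative of the difference is $(e^w-1)^2 \ge 0$) to obtain
\begin{equation*}
\E\bigl[e^{(X_t-\kappa)^+} - 1 - (X_t-\kappa)^+\bigr] \;\le\; \tfrac12\,\E\bigl[(e^{X_t-\kappa}-1)^2\bigr] \;=\; O(\gamma_t^2),
\end{equation*}
where the last estimate expands the square, uses the martingale identity $\E[e^{X_t}]=1$, invokes $\E[e^{2X_t}] - 1 \le C\gamma_t^2$ from~\eqref{ch2:eq:assX}, and absorbs $\kappa = O(\gamma_t)$. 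Since $\gamma_t \to 0$, this error is $o(\gamma_t)$.

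For the \emph{main term}, write $(X_t-\kappa)^+/\gamma_t = (X_t/\gamma_t - \kappa/\gamma_t)^+$: Hypothesis~\ref{ch2:ass:smalltime} together with $\kappa/\gamma_t \to a$ and the continuous mapping theorem gives the weak convergence $(X_t-\kappa)^+/\gamma_t \Rightarrow (Y-a)^+$. To upgrade this to convergence in expectation I need $L^1$-uniform integrability, which follows from the $L^2$-bound
\begin{equation*}
\E\bigl[(X_t^+ / \gamma_t)^2\bigr] \;\le\; \frac{\E[(e^{X_t}-1)^2]}{\gamma_t^2} \;=\; \frac{\E[e^{2X_t}] - 1}{\gamma_t^2} \;\le\; C,
\end{equation*}
where the first inequality uses $x^+ \le (e^x - 1)^+$ (valid because $e^x \ge 1+x$) and the second equality uses again $\E[e^{X_t}]=1$. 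Combining weak convergence with uniform integrability yields $\E[(X_t-\kappa)^+] \sim \gamma_t\,\E[(Y-a)^+]$, and since $e^\kappa \to 1$ and $\E[(Y-a)^+]>0$ (from $\P(Y>a)>0$), we conclude $c(\kappa,t) \sim \gamma_t\,\E[(Y-a)^+]$, as required.

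The only non-routine ingredient is the uniform integrability step, which hinges on the twin observations $x^+ \le e^x - 1$ and $\E[e^{X_t}] = 1$ to convert the exponential moment assumption~\eqref{ch2:eq:assX} into an $L^2$-bound on $X_t^+/\gamma_t$. Everything else is a clean Taylor splitting of the payoff and weak convergence; the main obstacle is simply to be careful in the bookkeeping of the remainder estimate so that $\kappa=O(\gamma_t)$ and the martingale identity get deployed in the right places.
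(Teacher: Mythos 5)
The paper does not give a proof of Theorem~\ref{ch2:th:main2a}; it is quoted verbatim from \cite[Theorem 2.7]{cf:CC}, so there is no internal argument to compare against. Your proof is correct and self-contained. The Taylor splitting $(e^{u}-e^{v})^+=e^{v}(u-v)^++e^{v}\bigl(e^{(u-v)^+}-1-(u-v)^+\bigr)$ is a genuine identity (check the two cases $u\le v$, $u>v$); the remainder bound $e^{w}-1-w\le\tfrac12(e^{w}-1)^2$ for $w\ge0$ holds since the difference vanishes at $0$ and has derivative $(e^w-1)^2\ge0$, and together with $(e^{(X_t-\kappa)^+}-1)^2\le(e^{X_t-\kappa}-1)^2$, the martingale identity $\E[e^{X_t}]=1$, \eqref{ch2:eq:assX}, and $\kappa=O(\gamma_t)$ it bounds the remainder by $O(\gamma_t^2)=o(\gamma_t)$. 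For the main term, $x^+\le(e^x-1)^+$ plus $\E[e^{X_t}]=1$ converts \eqref{ch2:eq:assX} into $\sup\E\bigl[\bigl((X_t-\kappa)^+/\gamma_t\bigr)^2\bigr]<\infty$ (using $\kappa\ge0$; the case $a=0$ with $\kappa$ possibly of either sign is handled by $\kappa=o(\gamma_t)$), which is the $L^2$ uniform-integrability bound needed to promote the weak convergence $(X_t-\kappa)^+/\gamma_t\Rightarrow(Y-a)^+$ (Slutsky plus continuous mapping) to convergence of means, with $\E[(Y-a)^+]<\infty$ by Fatou. One step I would spell out explicitly: the remainder $O(\gamma_t^2)$ is negligible \emph{relative to} the main term $e^\kappa\,\gamma_t\,\E[(Y-a)^+](1+o(1))$ only because $\E[(Y-a)^+]>0$, which is exactly what the assumption $\P(Y>a)>0$ guarantees; otherwise the asserted asymptotic equivalence would be vacuous.
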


\subsection{Proof of Theorem~\ref{ch3:lemma:calltgamma}, part \eqref{op:a}}

We fix a family of values of $(\kappa,t)$
such that either $t \to \bar t \in (0,\infty)$ and $\kappa \to \infty$,
or $t \to 0$ and $\kappa \to \bar\kappa \in (0,\infty]$.
Let us check the assumptions of Theorem~\ref{ch2:th:main2b}.
Relation \eqref{ch3:eq:Cramer2} shows that for all $\rho\ge 1$
\begin{equation} \label{ch3:eq:I+}
	\lim \frac{\log \P(X_t > \rho\kappa)}{\log \P(X_t > \kappa)}
	= \rho^{\frac{1}{1-D}} \,,
\end{equation}
hence Hypothesis~\ref{ch2:ass:rv} is satisfied
with $I_+(\rho) := \rho^{\frac{1}{1-D}}$.
The moment condition \eqref{ch2:eq:moment}
is implied by \eqref{ch2:eq:momentsimple}, which holds for all $T \in (0,\infty)$,
by Lemma~\ref{ch3:th:noexpl}.
By Theorem~\ref{ch2:th:main2b}, relation \eqref{ch2:eq:ma1c} holds.
However, since $-\log \P(X_t > \kappa) / \kappa \to \infty$
by \eqref{ch3:eq:Cramer2} (note that $\frac{1}{1-D} > 1$), relation \eqref{ch2:eq:ma1c} yields
\begin{equation} \label{ch3:eq:repc}
	\log c(\kappa,t) \sim  \log \P(X_t > \kappa) \,,
\end{equation}
which is precisely relation \eqref{ch3:eq:casinfty}. This completes the proof of
part \eqref{op:a} of Theorem~\ref{ch3:lemma:calltgamma}.\qed

\subsection{Proof of  Theorem~\ref{ch3:lemma:calltgamma}, part \eqref{op:b}}

Let us fix a family of values of $(\kappa,t)$ with $t \to 0$ and $\kappa \to 0$,
such that $\kappa \gg \sqrt{\sigma_0^2 \, t}$, excluding the regime
$\sqrt{2D+1} \, \bkappa_1( \sigma_0^2 \, t)  \leq \kappa 
\ll \bkappa_2( \sfc^{1/D} \, t)$
of part \eqref{op:c}. By a subsequence argument, it suffices to consider
the following regimes:
\begin{ienumerate}
\item\label{sr:1} $\sqrt{\sigma_0^2 \, t} \ll \kappa \ll \bkappa_1( \sigma_0^2 \, t)$;

\item\label{sr:2} $\kappa \sim a \, \bkappa_1( \sigma_0^2 \, t)$ 
with $a \in (0, \sqrt{2D+1}]$;

\item\label{sr:3} $\kappa \sim a \, \bkappa_2( \sfc^{1/D} \,  t)$ with $a \in (0, \infty)$;

\item\label{sr:4} $\kappa \gg \bkappa_2( \sfc^{1/D} \,  t)$.
\end{ienumerate}

We start checking Hypothesis~\ref{ch2:ass:rv}
in regimes \eqref{sr:1}, \eqref{sr:3} and \eqref{sr:4}
(the regime \eqref{sr:2} will be considered later).
In regime \eqref{sr:4}, relation \eqref{ch3:eq:Cramer2}
holds, cf.\ part \eqref{tp:a} in Theorem~\ref{ch3:th:tailprob}, hence
\eqref{ch3:eq:I+} applies again and 
$I_+(\rho) = \rho^{\frac{1}{1-D}}$ (recall \eqref{ch2:eq:rv}).
In regime \eqref{sr:1}, by relation \eqref{ch3:eq:withoutfa2},
\begin{equation} \label{ch3:eq:I+2}
	I_+(\rho) := \lim \frac{\log \P(X_t > \rho\kappa)}{\log \P(X_t > \kappa)}
	= \rho^2 \,.
\end{equation}
Finally, in regime \eqref{sr:3}, by \eqref{ch3:eq:gentail} (or equivalently \eqref{ch3:eq:withfa}),
\begin{equation} \label{ch3:eq:I+3}
	I_+(\rho) := \lim \frac{\log \P(X_t > \rho\kappa)}{\log \P(X_t > \kappa)}
	= \frac{\sff(\rho a)}{\sff(a)} \,.
\end{equation}
In all cases, Hypothesis~\ref{ch2:ass:rv} and relation \eqref{ch2:eq:Iplus} are satisfied.
As we show in a moment, also the moment condition \eqref{ch2:eq:moment0p} is satisfied.
Having checked all the assumptions of Theorem~\ref{ch2:th:main2b}
(recall that $t\to 0$ and $\kappa \to 0$), relation \eqref{ch2:eq:ma2c} holds.
This coincides with our goal \eqref{ch3:eq:cas0good}, completing the proof of 
part \eqref{op:b} of Theorem~\ref{ch3:lemma:calltgamma} 
in regimes \eqref{sr:1}, \eqref{sr:3} and \eqref{sr:4}.

\smallskip

It remains to check the moment condition \eqref{ch2:eq:moment0p}
in regimes \eqref{sr:1}, \eqref{sr:3} and \eqref{sr:4}. Since $\kappa \gg 
\sqrt{ \sigma_0^2 \, t}$
in all these regimes, this follows immediately from the next Lemma.

\begin{lemma}\label{ch3:lemma:moments}
There exists a constant $C \in (0,\infty)$
such that
 \[
  \E\big[e^{2X_t}\big]\leq 1 + C \, t \,, \qquad
  \forall 0 \le t \le 1 \,.
 \]
\end{lemma}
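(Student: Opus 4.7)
The plan is to reduce the exponential moment of $X_t$ to one of $I_t$ via Lemma~\ref{ch3:th:noexpl}, then control $I_t$ through the pathwise bound of Lemma~\ref{ch3:th:usefulub}, and finally compute everything explicitly using the Poisson law of $N_t$.

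\textbf{Step 1.} Applying \eqref{ch3:eq:noexpl} with $p=2$ gives the exact identity $\E[e^{2X_t}] = \E[e^{I_t}]$, so it suffices to prove $\E[e^{I_t}] \le 1 + C\,t$ for $t \in [0,1]$.

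\textbf{Step 2.} By Lemma~\ref{ch3:th:usefulub} we have $I_t \le \sigma_0^2 t + \sfc^2 N_t^{1-2D} t^{2D}$, and on $\{N_t=0\}$ we obtain the sharper bound $I_t \le \sigma_0^2 t$ from \eqref{ch3:eq:Nt0}. Since $1-2D \in (0,1)$, for $N_t \ge 1$ we have $N_t^{1-2D} \le N_t$; thus on $\{N_t \ge 1\}$ we can estimate $I_t \le \sigma_0^2 t + \sfc^2 N_t\, t^{2D}$.

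\textbf{Step 3.} Conditioning on $N_t$ and using $N_t \sim \mathrm{Pois}(\lambda t)$,
\begin{equation*}
\E[e^{I_t}] \le e^{\sigma_0^2 t}\Bigl( \P(N_t=0) + \sum_{m=1}^\infty e^{\sfc^2 m\, t^{2D}} e^{-\lambda t}\frac{(\lambda t)^m}{m!} \Bigr) = \exp\Bigl( \sigma_0^2 t + \lambda t\bigl(e^{\sfc^2 t^{2D}}-1\bigr) \Bigr).
\end{equation*}

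\textbf{Step 4.} For $t \in [0,1]$ the elementary inequality $e^y - 1 \le y\,e^{\sfc^2}$ for $y \in [0,\sfc^2]$ gives $\lambda t(e^{\sfc^2 t^{2D}}-1) \le \lambda \sfc^2 e^{\sfc^2}\, t^{1+2D} \le \lambda \sfc^2 e^{\sfc^2}\, t$. Hence $\E[e^{I_t}] \le e^{C' t}$ with $C' := \sigma_0^2 + \lambda \sfc^2 e^{\sfc^2}$, and one final application of $e^x - 1 \le x\,e^x$ for $x = C' t \in [0, C']$ yields $\E[e^{I_t}] \le 1 + C\, t$ with $C := C' e^{C'}$.

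There is no serious obstacle: the whole argument is a bookkeeping exercise on top of the already-proved Lemmas~\ref{ch3:th:noexpl} and~\ref{ch3:th:usefulub}. The only place requiring a bit of care is the monotonicity step $N_t^{1-2D}\le N_t$ on $\{N_t\ge 1\}$, which is what allows the infinite sum over $m$ to collapse into a closed form, and the verification that all constants are finite and uniform in $t \in [0,1]$.
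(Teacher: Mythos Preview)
Your proof is correct. Both you and the paper begin identically—reduce to $\E[e^{I_t}]$ via Lemma~\ref{ch3:th:noexpl}, then apply the pathwise bound $I_t \le \sigma_0^2 t + \sfc^2 N_t^{1-2D} t^{2D}$ from Lemma~\ref{ch3:th:usefulub}—but diverge in how the factor $\E[e^{\sfc^2 t^{2D} N_t^{1-2D}}]$ is controlled. The paper splits off $\{N_t=0\}$ and $\{N_t=1\}$ and handles $\{N_t \ge 2\}$ by Cauchy--Schwarz, using $\P(N_t\ge 2)=O(t^2)$ and a uniform bound $\E[e^{2\sfc^2 N_1^{1-2D}}]<\infty$. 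You instead use the monotonicity $N_t^{1-2D}\le N_t$ on $\{N_t\ge 1\}$, which collapses the whole sum into the Poisson moment generating function $\exp(\lambda t(e^{\sfc^2 t^{2D}}-1))$, after which only elementary inequalities remain. Your route is shorter and more self-contained; the paper's Cauchy--Schwarz argument, while slightly heavier, makes transparent that the $\{N_t\ge 2\}$ contribution is of order $t$ on its own, which is conceptually aligned with how similar decompositions are used elsewhere in the paper (e.g.\ relation~\eqref{ch3:eq:Ptgamma1}).
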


\begin{proof}
By the equality in \eqref{ch3:eq:noexpl} and the upper bound \eqref{ch3:eq:eqItupfin}, 
we can write
\begin{equation}\label{ch3:eq:mta}
	\E[e^{2X_t}] = \E[e^{I_t}] \le e^{\sigma_0^2 t} \, \E[e^{\sfc^2 t^{2D} N_t^{1-2D}}] \,.
\end{equation}
Next observe that, by Cauchy-Schwarz 
inequality and $\P(N_t = k) = e^{-\lambda t} \frac{(\lambda t)^k}{k!}$,
\begin{equation} \label{ch3:eq:ity}
\begin{split}
	\E[e^{\sfc^2 t^{2D} N_t^{1-2D}}] & = \P(N_t = 0)
	+ e^{\sfc^2 t^{2D}} \P(N_t=1) + \E[e^{\sfc^2 t^{2D} N_t^{1-2D}}
	\ind_{\{N_t \ge 2\}}] \\
	& \le 1 + e^{\sfc^2 t^{2D}} \lambda t  +
	\sqrt{\E[e^{2\sfc^2 t^{2D} N_t^{1-2D}}] \P(N_t\ge 2)} \,.
\end{split}
\end{equation}
Note that 
$\P(N_t\ge 2) = 1 - e^{-\lambda t} (1+\lambda t) = \frac{1}{2} (\lambda t)^2
+ o(t^2)$ as $t\downarrow 0$.
For all $0 \le t \le 1$ we can write
$\E[e^{2\sfc^2 t^{2D} N_t^{1-2D}}] \le \E[e^{2\sfc^2 N_1^{1-2D}}] =: c_1 < \infty$,
and $e^{\sfc^2 t^{2D}} \le e^{\sfc^2}$, hence \eqref{ch3:eq:ity} yields
\begin{equation*}
	\E[e^{\sfc^2 t^{2D} N_t^{1-2D}}] \le 1 + e^{\sfc^2} \lambda t
	+ \sqrt{\frac{c_1 \lambda^2}{2} \, (t^2+o(t^2))} \le 1 + c_2 \, t \,,
\end{equation*}
for some $c_2 < \infty$. Consequently, by \eqref{ch3:eq:mta},
\begin{equation*}
	\E[e^{2 X_t}] \le e^{\sigma_0^2 t} \big(1+c_2 \, t \big)
	= \big(1+ \sigma_0^2 \, t + o(t) \big)\big(1+c_2 \, t \big)
	\le 1 + C t \,,
\end{equation*}
for some $C < \infty$.
\end{proof}

We are left with considering regime \eqref{sr:2}, i.e.\ we fix a family of $(\kappa,t)$ such that
\begin{equation}\label{ch3:eq:family}
	t \to 0 \qquad\text{and}\qquad \kappa \sim a\, \bkappa_1(\sigma_0^2 \, t) \,,
	\quad \text{for some} \quad  a \in (0, \sqrt{2D+1}] \,.
\end{equation}
In this regime the assumptions of Theorem~\ref{ch2:th:main2b} are \emph{not}
verified, hence we proceed by bare hands estimates.
Our goal is to prove \eqref{ch3:eq:cas0good} which,
recalling \eqref{ch3:eq:withoutfa2}, can be rewritten as
\begin{equation}\label{ch3:eq:goalgoal}
	\log \big( c(\kappa,t)/\kappa \big) \sim 
	- \frac{a^2}{2} \, \log \frac{1}{ \sigma_0^2 \, t}  \,.
\end{equation}
We prove separately upper and lower bounds for this relation.

Let us set
\begin{equation}\label{ch3:eq:family'}
	\kappa' :=  2 \, \bkappa_1( \sigma_0^2 \, t) \,, \qquad
	\kappa'' := B \, \bkappa_2( \sfc^{1/D} \,  t) \,,
\end{equation}
for fixed $B \in (0,\infty)$, chosen later.
Noting that $\kappa < \kappa' < \kappa''$, since $D < \frac{1}{2}$, we can write
\begin{equation}\label{ch3:eq:upperboundcallK}
 \begin{split}
c(\kappa,t) &=\E\left[(e^{X_t}-e^{\kappa})1_{\{X_t > \kappa\}}\right]\\
&=\E\left[(e^{X_t}-e^{\kappa})1_{\{\kappa < X_t \le 
\kappa'\}}\right]
+\E\left[(e^{X_t}-e^{\kappa})1_{\{\kappa' < X_t \le
\kappa''\}}\right]  +\E\left[(e^{X_t}-e^{\kappa})1_{\{X_t>\kappa''\}}\right]\\
&=(1)+(2)+(3) \,.
 \end{split}
\end{equation}
By Fubini's theorem, for $\kappa \ge 0$ and $0 \le a < b$,
\begin{equation} \label{ch3:eq:alwaysby}
\begin{split}
	\E\big[ (e^{X_t}  - e^{\kappa})1_{\{a < X_t \le b\}}\big]
	&= \E\left[ \left( \int_\kappa^{\infty} 
	e^x \, \ind_{\{x < X_t\}} \, \dd x \right)  \ind_{\{a < X_t \le b\}} \right] \\
	& =\int_\kappa^{b} 
	e^x \, \P(\max\{a,x\} < X_t \le b) \, \dd x\\
	& \le (e^b-1) \, \P(X_t > \max\{a,\kappa\}) \,,
\end{split}
\end{equation}
hence
\begin{equation} \label{ch3:eqqe}
 (1) = \E\left[(e^{X_t}-e^{\kappa})1_{\{\kappa < X_t \le 
\kappa'\}}\right]
\leq (e^{\kappa'} - 1) \P(X_t>\kappa)
 \sim \kappa' \,\P(X_t>\kappa) \,, 
\end{equation}
because $\kappa' \to 0$.
Note that, by \eqref{ch3:eq:family} and \eqref{ch3:eq:withoutfa2},
\begin{equation} \label{ch3:eq:gonnause}
	\log \P(X_t > \kappa) \sim -
	\frac{a^2}{2}\, \log \frac{1}{ \sigma_0^2 \, t} \,,
\end{equation}
and since $\frac{\kappa'}{\kappa} \sim \frac{ 2}{a} = (const.)$,
recall \eqref{ch3:eq:family'}, it follows by \eqref{ch3:eqqe} that
\begin{equation} \label{eq:est1}
	\log \frac{(1)}{\kappa} \le 
	\log \frac{\kappa'}{\kappa} + \log \P(X_t > \kappa) =
	 - \frac{a^2}{2} \, \log \frac{1}{ \sigma_0^2 \, t} \big(1+o(1)\big)
	  =: \mathrm{Est}(1)\,.
\end{equation}
In a similar way, always using \eqref{ch3:eq:alwaysby},
since $\kappa < \kappa'$ and $\kappa'' \to 0$,
\begin{equation}
 (2) = \E\left[(e^{X_t}-e^{\kappa})1_{\{\kappa' < X_t \le
\kappa''\}}\right] \leq (e^{\kappa''}-1) \P(X_t > \kappa')
 \sim \kappa'' \,\P(X_t>\kappa') \,.
\end{equation}
By \eqref{ch3:eq:withoutfa1},
we can write
\begin{equation}\label{eq:2}
 \begin{split}
	\log \frac{(2)}{\kappa} & \le 
	\log \frac{B\, \bkappa_2(\sfc^{1/D} \, t)}{\kappa}
	+ \log \P(X_t>\kappa') \\
	& \le \big( 1 + o(1) \big) \bigg\{ \log \frac{\bkappa_2(\sfc^{1/D} \, t)}{\kappa}
	-  \log \frac{1}{\lambda \, t} \bigg\} 
	=: \mathrm{Est}(2) \,.
\end{split}
\end{equation}
Note that $\log \frac{\bkappa_2(\sfc^{1/D}t)}{\kappa} \sim (\frac{1}{2}-D)
\log \frac{1}{\lambda t}$, hence
\begin{equation} \label{eq:est2}
	\mathrm{Est(2)} = -\big( 1+o(1) \big) \bigg(D+\frac{1}{2}\bigg)
	\log \frac{1}{\lambda t} \,.
\end{equation}
Finally, by Cauchy-Schwarz inequality 
\begin{equation} \label{ch3:eq:est30}
 (3) = \E\left[(e^{X_t}-e^{\kappa})1_{\{X_t>\kappa''\}}\right]
 \leq \kappa \, 
\sqrt{ \E\left[\left(\frac{e^{X_t}-e^{\kappa}}{\kappa}\right)^2\right]
\P(X_t>\kappa'')} \,. 
\end{equation}
By Lemma~\ref{ch3:lemma:moments} and $\E[e^{X_t}] = 1$
(recall that $(e^{X_t})_{t\ge 0}$ is a martingale) we have
\begin{equation*}
	\E\left[\left(\frac{e^{X_t}-e^{\kappa}}{\kappa}\right)^2\right]
	= \frac{\E[e^{2X_t}] - 2 e^\kappa + e^{2\kappa}}{\kappa^2}
	\le \frac{1 + C t - 2 e^\kappa + e^{2\kappa}}{\kappa^2}
	= \frac{C t}{\kappa^2} + \frac{e^{2\kappa}  - 2 e^\kappa -1}{\kappa^2} 
	\to 1 \,,
\end{equation*}
because $\kappa \to 0$ and $\kappa /\sqrt{t} \to \infty$, by \eqref{ch3:eq:family}
and the definition \eqref{ch3:eq:gh} of $\bkappa_1(\cdot)$. 
In particular, 
\begin{equation*}
	 (3)\leq \big(1+o(1)\big) \, \kappa \, \sqrt{\P(X_t>\kappa'')} \,.
\end{equation*}
Recalling \eqref{ch3:eq:withfa} (where $\tilde a \to a$ as $t\to 0$), we see that
\begin{equation} \label{ch3:eq:est3}
	\log \frac{(3)}{\kappa} \le -\big(1+o(1)\big)\frac{1}{2} \sff(B) \log\frac{1}
	{ \lambda \, t}  =: \mathrm{Est}(3) \,.
\end{equation}
Let us choose $B > 0$ large enough, so that $\frac{\sff(B)}{2} > D+\frac{1}{2}$,
so that $\mathrm{Est(3)} < \mathrm{Est(2)}$.
Since $\log (a+b+c) \leq \log 3 +\max \{\log a, \log b, \log c \}$, we obtain
by \eqref{ch3:eq:upperboundcallK}
\begin{equation}\label{ch3:eq:ufa}
\begin{split}
 \log \frac{c(\kappa,t)}{\kappa} \leq 
	\max\big\{
	\mathrm{Est}(1),\, \mathrm{Est}(2) \big\} \,.
\end{split}
\end{equation}

We now use the assumption $a \le \sqrt{2D+1}$,
cf.\ \eqref{ch3:eq:family}. Then
$\mathrm{Est}(1) \ge (1+o(1))\mathrm{Est}(2)$, so
\begin{equation} \label{ch3:eq:halfgoal}
	\log \frac{c(\kappa,t)}{\kappa}
	\le \mathrm{Est}(1)
	= - \big(1+o(1)\big)
	\frac{a^2}{2} \, \log \frac{1}{ \sigma_0^2 \, t} \,,
\end{equation}
which is ``half'' of our goal \eqref{ch3:eq:goalgoal}.

In order to obtain the corresponding lower bound, we observe that for every $\hat\kappa >\kappa$
\begin{equation}\label{ch3:eq:lowerboundctgammafinale}
\begin{split}
c(\kappa,t) &= \E\left[\left(e^{X_t}-e^{\kappa} \right)1_{\{X_t>\kappa\}}\right]
\ge
\E\left[\left(e^{X_t}-e^{\kappa}\right)1_{\{X_t>
\hat\kappa\}}\right] \geq (e^{
\hat\kappa}-e^{\kappa})\P(X_t>
\hat\kappa)\\
&\geq (
\hat\kappa-\kappa)\P(X_t>\hat\kappa).
\end{split}
\end{equation}
Always for $\kappa$ as in \eqref{ch3:eq:family},
choosing $\hat\kappa = (1+\epsilon)\kappa$ gives, recalling \eqref{ch3:eq:gonnause},
\begin{equation} \label{ch3:eq:lbuff}
 \log \frac{c(\kappa,t)}{\kappa}
 \geq \log \epsilon + \log\P(X_t>(1+\epsilon)\kappa)
 =  -(1+\epsilon)^2 \frac{a^2}{2} \, \log \frac{1}{\sigma_0^2 \, t} \big(1+o(1)\big)\,.
\end{equation}
This shows that,
along the given family of values of $(\kappa,t)$,
\begin{equation*}
	\liminf \frac{\frac{c(\kappa,t)}{\kappa}}{\log \frac{1}{\sigma_0^2 \, t}}
	\ge - (1+\epsilon)^2 \frac{a^2}{2} \,.
\end{equation*}
Since $\epsilon > 0$ is arbitrary, we have shown that
\begin{equation}
 \log \frac{c(\kappa,t)}{\kappa}\geq - 
 \big(1+o(1)\big) \, \frac{a^2}{2} \, \log \frac{1}{ \sigma_0^2 \, t}\,.
\end{equation}
Together with \eqref{ch3:eq:halfgoal}, 
this completes the proof of \eqref{ch3:eq:goalgoal}
and of part \eqref{tp:b} of Theorem~\ref{ch3:lemma:calltgamma}.\qed

\subsection{Proof of  Theorem~\ref{ch3:lemma:calltgamma}, part \eqref{op:c}}

Let us fix a family of values of $(\kappa,t)$ with 
\begin{equation}\label{ch3:eq:familybis}
	t \to 0 \qquad\text{and}\qquad 
	\sqrt{2D+1}\, \bkappa_1(\sigma_0^2 \, t) \le \kappa \ll \, 
	\bkappa_2(\sfc^{1/D} \, t) \,.
\end{equation}
Our goal is to prove \eqref{ch3:eq:cas0bad2}, that is
\begin{equation}\label{ch3:eq:goalnew}
	\log \big( c(\kappa,t)/\kappa \big) \sim 
	\log \frac{\bkappa_2(\sigma_0^2 \, t)}{\kappa}
	- \log \frac{1}{\sigma_0^2 \, t}\,.
\end{equation}

Consider first the subregime of \eqref{ch3:eq:familybis}
given by $\kappa \le \sqrt{2} \, \bkappa_1(\sigma_0^2 \, t)$,
so assume (without loss of generality, by extracting
a subsequence) that $\kappa \sim a \, \bkappa_1(\sigma_0^2 \, t)$ with
$a \in [\sqrt{2D+1}, \sqrt{2}]$.
Note that all the steps from \eqref{ch3:eq:family'} until \eqref{ch3:eq:ufa}
can be applied verbatim. However, this time
$a \ge \sqrt{2D+1}$, hence $\frac{a^2}{2} \ge D+\frac{1}{2}$
which yields $\mathrm{Est}(1) \le (1+o(1))\mathrm{Est}(2)$, cf.\
\eqref{eq:est1} and \eqref{eq:est2}.
Then, instead of relation \eqref{ch3:eq:halfgoal},  we get (recall \eqref{eq:2})
\begin{equation} \label{ch3:eq:halfgoalbis}
	\log \frac{c(\kappa,t)}{\kappa}
	\le \mathrm{Est}(2) =
	\big( 1 + o(1) \big) \bigg\{ \log \frac{\bkappa_2(\sfc^{1/D} \, t)}{\kappa}
	-  \log \frac{1}{\lambda \, t} \bigg\} \,,
\end{equation}
which is ``half'' of our goal \eqref{ch3:eq:goalnew}.

Next we consider the subregime of \eqref{ch3:eq:familybis} 
of $\kappa > \sqrt{2} \, \bkappa_1(\sigma_0^2 \, t)$.
Defining $\kappa'' := B \,\bkappa_2(\sfc^{1/D} \, t)$ as in \eqref{ch3:eq:family'},
we modify \eqref{ch3:eq:upperboundcallK} as follows:
\begin{equation}\label{ch3:eq:upperboundcallKbis}
 \begin{split}
c(\kappa,t) &=\E\left[(e^{X_t}-e^{\kappa})1_{\{\kappa < X_t \le 
\kappa''\}}\right] +\E\left[(e^{X_t}-e^{\kappa})1_{\{X_t>\kappa''\}}\right]
=: (A) + (B) \,.
 \end{split}
\end{equation}
Applying \eqref{ch3:eq:alwaysby}, we estimate the first term as follows,
since $\kappa'' \to 0$:
\begin{equation*}
	(A) = \E\left[(e^{X_t}-e^{\kappa})1_{\{\kappa < X_t \le 
	\kappa''\}}\right] \le (e^{\kappa''}-1) \, \P(X_t > \kappa)
	\sim \kappa'' \, \P(X_t > \kappa) \,.
\end{equation*}
By \eqref{ch3:eq:withoutfa1} we have
$\log \P(X_t > \kappa) = -(1+o(1))\log \frac{1}{\lambda \, t}$,
hence
\begin{equation*}
	\log \frac{(A)}{\kappa} \le \log\frac{\kappa''}{\kappa}
	+ \log\P(X_t > \kappa) 
	= \big(1+o(1)\big)  \bigg\{ \log \frac{\bkappa_2(\sfc^{1/D} \, t)}{\kappa}
	-  \log \frac{1}{\lambda \, t} \bigg\} \,.
\end{equation*}
The term (B) in \eqref{ch3:eq:upperboundcallKbis} coincides with term
(3) in \eqref{ch3:eq:est30}, hence by \eqref{ch3:eq:est3} 
\begin{equation} \label{eq:neweq}
\begin{split}
	\log \frac{(B)}{\kappa} \le -\big(1+o(1)\big) \frac{\sff(B)}{2} \log\frac{1}{\lambda \, t} 
	& \le \big(1+o(1)\big)  
	\frac{\sff(B)}{2} \bigg\{ \log \frac{\bkappa_2(\sfc^{1/D} \, t)}{\kappa}
	-  \log \frac{1}{\lambda \, t} \bigg\} \,,
\end{split}
\end{equation}
where the second inequality holds since
$\kappa \le \bkappa_2(\sfc^{1/D} \, t)$ by \eqref{ch3:eq:familybis}.
Choosing $B$ large enough, so that $\sff(B) > 2$,
the inequality $\log(a+b) \le \log 2 + \log\max\{a,b\}$ yields
\begin{equation}\label{ch3:eq:ub}
	\log \frac{c(\kappa,t)}{\kappa} \le \big(1+o(1)\big) \bigg\{ \log 
	\frac{\bkappa_2(\sfc^{1/D} \, t)}{\kappa}
	-  \log \frac{1}{\lambda \, t} \bigg\} \,.
\end{equation}
We have thus proved ``half'' of our goal \eqref{ch3:eq:goalnew}.

\smallskip

We finally turn to the lower bound,
for which we do not need to distinguish subregimes,
but we work in the general regime \eqref{ch3:eq:familybis}.
We can apply \eqref{ch3:eq:lowerboundctgammafinale}
with $\hat\kappa = \epsilon\bkappa_2(\sfc^{1/D} \, t)$.
Recalling that $\log \P(X_t > \epsilon \, \bkappa_2(\sfc^{1/D} \, t)) 
\sim -\sff(\epsilon) \log\frac{1}{ \lambda \, t}$
by \eqref{ch3:eq:withfa}, and moreover
\begin{equation*}
	\log \frac{\hat\kappa - \kappa}{\kappa} \sim
	\log \bigg( \frac{\epsilon \bkappa_2( \sfc^{1/D} \, t)}{\kappa} - 1 \bigg) \sim
	\log \frac{\bkappa_2(\sfc^{1/D} \, t)}{\kappa} \,,
\end{equation*}
we see that relation \eqref{ch3:eq:lowerboundctgammafinale} gives
\begin{equation} \label{ch3:eq:lbuff2}
 \log \frac{c(\kappa,t)}{\kappa}
	\ge -\big(1+o(1)\big) 
	\bigg\{ \log \frac{\bkappa_2(\sfc^{1/D} \, t)}{\kappa}
	- \sff(\epsilon) \, \log \frac{1}{\lambda \, t} \bigg\} \,.
\end{equation}
Since $\epsilon > 0$ is arbitrary and $\lim_{\epsilon \downarrow 0} \sff(\epsilon)
= \sff(0) = 1$, cf.\ \eqref{ch3:eq:f}, we have shown that
\begin{equation*}
	 \log \frac{c(\kappa,t)}{\kappa}
	\ge -\big(1+o(1)\big) \bigg\{ \log \frac{\bkappa_2(\sfc^{1/D} \, t)}{\kappa}
	- \log \frac{1}{\lambda \, t} \bigg\} \,.
\end{equation*}
Together with \eqref{ch3:eq:halfgoalbis} and \eqref{ch3:eq:ub},
this completes the proof of our goal \eqref{ch3:eq:goalnew}.\qed

\subsection{Proof of  Theorem~\ref{ch3:lemma:calltgamma}, parts \eqref{op:d} and \eqref{op:e}}
\label{ch3:sec:callesigmat05}

By \eqref{ch3:eq:gammat}, Hypothesis~\ref{ch2:ass:smalltime} is satisfied
with $\gamma = \sqrt{\sigma_0^2 \, t}$ and $Y = W_1$, while the moment condition 
\eqref{ch2:eq:assX} is verified 
by Lemma~\ref{ch3:lemma:moments}.
We can then apply relation \eqref{ch2:eq:astypical} in Theorem~\ref{ch2:th:main2a}, 
which for $\kappa \sim a \sqrt{\sigma_0^2 \, t}$ yields
\begin{equation*}
 \begin{split}
c(\kappa,t) &\sim \sqrt{\sigma_0^2 \, t}
\, \E\left[\left(W_1- a\right)^+\right]
=\sqrt{\sigma_0^2 \, t} \, \left[\int_{ a}^{\infty}
x \, \frac{e^{-\frac{x^2}{2}}}{\sqrt{2\pi}} \, \dd x
\,-\,  a \int_{ a}^{\infty} 
\frac{e^{-\frac{x^2}{2}}}{\sqrt{2\pi}} \, \dd x \right]\\
&=\sqrt{\sigma_0^2 \, t}\,
\Bigg(\frac{e^{-\frac{a^2}{2}}}{\sqrt{2\pi}}-
 a \left(1-\Phi\left( a\right)\right)\Bigg)
=\sqrt{\sigma_0^2 \, t}\, \left(\phi\left( a\right) \,-\,
 a \, \Phi\left(- a\right)\right) \,.
 \end{split}
\end{equation*}
For $a > 0$ this coincides with \eqref{ch3:eq:typ}, while for $a=0$
it coincides with \eqref{ch3:eq:typlast}.\qed

\section{Proof of Theorem~\ref{ch3:th:main} (implied volatility)}
\label{ch3:sec:th:main}

In this section we prove Theorem~\ref{ch3:th:main}, or more precisely
we derive it from Theorem~\ref{ch3:lemma:calltgamma} 
(which is proved in Section~\ref{ch3:sec:lemma:calltgamma}). 
In fact, the link between option price and implied volatility asymptotics
is model independent, as recently shown in \cite{cf:GL}.
Let us summarize the results that will be needed in the sequel, following \cite{cf:CC}.

\subsection{From option price to implied volatility}
\label{ch3:sec:reminders2}

Let us define the function
\begin{equation} \label{ch2:eq:D}
	D(z) := \frac{1}{z}\phi(z) - \Phi(-z) , \qquad \forall z > 0 \,,
\end{equation}
where $\phi(\cdot)$ and $\Phi(\cdot)$ 
are the density and distribution function of a standard Gaussian.
Since $D: (0,\infty) \to (0,\infty)$ is smooth and strictly decreasing,
its inverse $D^{-1}:(0,\infty) \to (0,\infty)$ is also smooth, strictly decreasing and 
has the following asymptotic behavior \cite[\S 4.1]{cf:CC}:
\begin{equation}\label{ch2:eq:Das}
	D^{-1}(y) \sim \sqrt{2 \, (-\log y)}  \ \ \ \
	\text{as } y \downarrow 0  \,, \quad \quad \ \
	D^{-1}(y) \sim \frac{1}{\sqrt{2\pi}} \frac{1}{y} \ \ \ \
	\text{as } y \uparrow \infty\,.
\end{equation}

The next result links
option price and implied volatility in a model independent way.

\begin{theorem}\label{ch2:th:main1}
Consider a family of values of $(\kappa,t)$ with $\kappa\ge 0$, $t>0$,
such that $c(\kappa, t) \to 0$.
\begin{itemize}
\item In case $\liminf\kappa > 0$, one has
\begin{equation} \label{ch2:eq:Vas>00}
	\sigma_\imp(\kappa,t)  \sim
	\bigg( \sqrt{\frac{-\log c(\kappa,t)}{\kappa}+1} 
	-\sqrt{\frac{-\log c(\kappa,t)}{\kappa}} \, \bigg) \,
	\sqrt{\frac{2\kappa}{t}} \,.
\end{equation}
\item In case $\kappa\to 0$, with  $\kappa>0$, one has
\begin{equation} \label{ch2:eq:Vas<infty0}
	\sigma_\imp(\kappa,t)  \sim
	\frac{1}{\rule{0pt}{1.2em}D^{-1} 
	\Big(\frac{c(\kappa,t)}{\kappa} \Big)}
	\frac{\kappa}{\sqrt{t}} \,.
\end{equation}

\item In case $\kappa = 0$, one has
\begin{equation}\label{ch2:eq:Vas00}
	\sigma_\imp(0,t) \sim \sqrt{2\pi} \,\, \frac{c(0,t)}{\sqrt{t}}\,.
\end{equation}
\end{itemize}
\end{theorem}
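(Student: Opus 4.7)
Since $\sigma_\imp(\kappa,t)$ is defined by $c(\kappa,t) = \CBS(\kappa,\sigma_\imp\sqrt{t})$, my plan is to set $v := \sigma_\imp(\kappa,t)\sqrt{t}$ and invert the equation $\CBS(\kappa,v) = c$ asymptotically for $v$ in terms of $\kappa$ and $c := c(\kappa,t)$; the three conclusions then follow by dividing by $\sqrt{t}$. A preliminary observation, common to all three cases, is that $v \to 0$. For $\liminf\kappa > 0$ this follows because $\CBS(\kappa,\cdot)$ is strictly increasing from $(1-e^\kappa)^+ = 0$ at $v=0$ to $1$ as $v \to \infty$, so $c \to 0$ forces $v \to 0$; for $\kappa \to 0$ (including $\kappa = 0$) one instead uses that $\CBS(\kappa,v) \ge 2\Phi(v_0/2)-1 > 0$ eventually along any subsequence with $v \ge v_0 > 0$.

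In the first case, once $v \to 0$ with $\kappa$ bounded below, both $d_1 = -\kappa/v + v/2$ and $d_2 = -\kappa/v - v/2$ tend to $-\infty$. I would apply the Mills ratio expansion $\Phi(-x) = \phi(x)/x\,(1+O(1/x^2))$ to $\Phi(d_1)$ and $\Phi(d_2)$ and exploit the algebraic identity $d_2^2 - d_1^2 = 2\kappa$, equivalently $\phi(d_1) = e^\kappa\phi(d_2)$, to collapse the two terms:
\[
	\CBS(\kappa,v) = \phi(d_1)\,\frac{d_1-d_2}{d_1 d_2}\,(1+o(1)) = \phi(d_1)\,\frac{v}{d_1 d_2}\,(1+o(1))\,.
\]
Taking logarithms yields $-\log c = \tfrac12 d_1^2 + O(\log(\kappa/v)) = \kappa^2/(2v^2) - \kappa/2 + v^2/8 + o(\cdot)$. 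Writing $L := -\log c$, this reduces to the biquadratic $v^4 - 8(L+\kappa/2)\,v^2 + 4\kappa^2 = 0$; the small-$v$ branch is $v^2 = 4L + 2\kappa - 4\sqrt{L(L+\kappa)}$, which after dividing by $2\kappa$ rewrites as the perfect square
\[
	\frac{v^2}{2\kappa} = \bigl(\sqrt{L/\kappa+1}-\sqrt{L/\kappa}\bigr)^2\,,
\]
giving exactly \eqref{ch2:eq:Vas>00}.

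For $\kappa \to 0$ with $\kappa > 0$, I would Taylor-expand at fixed $z := \kappa/v \in (0,\infty)$: using $\Phi(-z\pm v/2) = \Phi(-z) \pm \phi(z)\,v/2 + O(v^2)$ and $e^\kappa = 1 + \kappa + O(\kappa^2)$, the $\Phi(-z)$ terms cancel and the surviving pieces combine (via $\kappa = zv$) into
\[
	\CBS(\kappa,v) = v\,\bigl[\phi(z)-z\,\Phi(-z)\bigr]\,(1+o(1)) = \kappa\,D(z)\,(1+o(1))\,,
\]
with $D$ as in \eqref{ch2:eq:D}. The strict monotonicity of $D$ inverts this into $\kappa/v \sim D^{-1}(c/\kappa)$, yielding \eqref{ch2:eq:Vas<infty0}. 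For $\kappa = 0$ the bare expansion $\CBS(0,v) = 2\Phi(v/2)-1 = v/\sqrt{2\pi} + O(v^3)$ inverts immediately to $v \sim \sqrt{2\pi}\,c$, giving \eqref{ch2:eq:Vas00}.

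The main obstacle is establishing that the error terms in these expansions of $\CBS(\kappa,v)$ are genuinely subdominant \emph{uniformly} over all admissible $(\kappa,t)$ families, so as to upgrade the algebraic inversions into the sharp $\sim$ asymptotic rather than a mere leading-order agreement. In Case 1 one must bound the $O(\log(\kappa/v))$ and $O(v^2)$ remainders against the main term $\kappa^2/(2v^2)$ uniformly as $\kappa$ ranges from bounded to diverging and $L/\kappa$ from bounded to infinite; in Case 2 the analogous issue is uniformity of the $o(1)$ term in $z = \kappa/v \in (0,\infty)$, which naturally splits into a small-$z$ regime (where $D(z) \sim 1/(\sqrt{2\pi}\,z)$, matching Case 3) and a large-$z$ regime (where Mills' ratio again governs and $D(z) \sim \phi(z)/z^3$, matching Case 1). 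Once these uniform remainder estimates are in place, the algebraic computations above close the proof.
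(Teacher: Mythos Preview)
The paper does not give its own proof of this theorem: it records that relation \eqref{ch2:eq:Vas>00} was proved in \cite{cf:GL} and relation \eqref{ch2:eq:Vas<infty0} in \cite{cf:CC}, and refers to \cite[Theorem~2.8]{cf:CC} for a self-contained argument. Your strategy---invert $\CBS(\kappa,v)=c$ asymptotically via Mills' ratio and the identity $\phi(d_1)=e^\kappa\phi(d_2)$ in Case~1, and via the expansion $\CBS(\kappa,v)=\kappa\,D(\kappa/v)\,(1+o(1))$ in Case~2---is exactly the standard route taken in those references, and your algebra (the biquadratic in $v^2$ and its perfect-square factorization) is correct.

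There is one genuine gap in your preliminary step. Your claim that $v\to 0$ in Case~1 rests on inverting the monotone map $\CBS(\kappa,\cdot)$ for fixed $\kappa$; this is fine when $\kappa$ stays bounded, but the hypothesis $\liminf\kappa>0$ allows $\kappa\to\infty$, and then $v$ need \emph{not} tend to $0$. For instance, if $\kappa=v^2\to\infty$ then $d_1=-v/2\to-\infty$ and $\CBS(\kappa,v)\to 0$ while $v\to\infty$. What the argument actually requires---and what \emph{does} follow from $c\to 0$ with $\liminf\kappa>0$---is that $d_1\to-\infty$ (hence also $d_2\to-\infty$), which is precisely the input needed for Mills' expansion. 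Once you replace ``$v\to 0$'' by ``$d_1\to-\infty$'', your Case~1 derivation goes through verbatim; note in particular that the $v^2/8$ term is then no longer negligible a priori, but you have already kept it in the biquadratic, so no change is needed there. The uniformity concerns you raise in your final paragraph are real but routine; the point above is the one structural correction to make.
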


Relation \eqref{ch2:eq:Vas>00}, with explicit estimates for the error,
was proved in \cite{cf:GL}, extending \cite{cf:BF,cf:L,cf:RR,cf:G}.
Relation \eqref{ch2:eq:Vas<infty0} was proved 
in \cite{cf:CC} (see also \cite{cf:MT}).
We refer to \cite[Theorem 2.9]{cf:CC} for 
a self-contained proof of Theorem~\ref{ch2:th:main1}.

\begin{remark}\rm\label{ch2:rem:simple}
Whenever $\frac{-\log c(\kappa,t)}{\kappa} \to \infty$, 
formula \eqref{ch2:eq:Vas>00} simplifies to
\begin{equation}\label{ch2:eq:Vas>00simple}
	\sigma_\imp(\kappa,t) \sim \frac{\kappa}
	{\sqrt{2t (-\log  c(\kappa,t) )}} \,.
\end{equation}
Analogously, by \eqref{ch2:eq:Das},  formula \eqref{ch2:eq:Vas<infty0}
can be made more explicit as follows:
\begin{align}\label{ch2:eq:Vas<infty0simple}
	& \sigma_\imp(\kappa,t) \sim \begin{cases}
	\rule{0em}{1.6em}\displaystyle
	\frac{\kappa}
	{\sqrt{2t ( - \log ( c(\kappa,t) / \kappa ) )}} & 
	\text{ if \ 
		$\displaystyle\frac{c(\kappa,t)}{\kappa} \to 0$} \,; \\
	\rule[-1.4em]{0em}{3.5em}\displaystyle
	\frac{\kappa}
	{D^{-1}(a) \sqrt{t}}  & 
	\text{ if \ 
	$\displaystyle\frac{c(\kappa,t)}{\kappa} \to a \in (0,\infty)$} \,; \\
	\rule[-1.4em]{0em}{3.5em}\displaystyle
	\sqrt{2\pi} \, \frac{c(\kappa,t)}{\sqrt{t}}  & 
	\text{ if \ 
	$\displaystyle\frac{c(\kappa,t)}{\kappa} \to \infty$ \ or
	if \ $\kappa = 0$} \,.
	\end{cases}
\end{align}
\end{remark}

\subsection{Proof of Theorem~\ref{ch3:th:main}, part \eqref{iv:a}}

Consider a family of values of $(\kappa,t)$
such that either $t \to \bar t \in (0,\infty)$ and $\kappa \to \infty$, or
$t \to 0$ and $\kappa \gg \bkappa_2(\sfc^{1/D} \, t)$.
We consider two subregimes:
\begin{ienumerate}
\item \label{it:i}
either $t \to \bar t \in (0,\infty)$ and $\kappa \to \infty$, or
$t \to 0$ and $\kappa \to \bar\kappa \in (0,\infty]$;

\item \label{it:ii}
both $t \to 0$ and $\kappa \to 0$ with $\kappa \gg \bkappa_2(\sfc^{1/D} \, t)$.
\end{ienumerate}
Our goal is to prove that in both subregimes
relation \eqref{ch3:eq:smile} holds.

We start with subregime \eqref{it:i}.
By Theorems~\ref{ch3:lemma:calltgamma}
and~\ref{ch3:th:tailprob}, relations \eqref{ch3:eq:casinfty}
and \eqref{ch3:eq:Cramer2} give
\begin{equation} \label{eq:plugin}
	\log c(\kappa,t) \sim \log \P(X_t > \kappa)
	\sim 
	- \sfC \left( \frac{\kappa}{ \sfc \, t^D}\right)^{\frac{1}{1-D}}
	\left(\log \frac{\kappa}{ \sfc^{1/D} \, t}\right)^{\frac{1/2-D}{1-D}}  \,.
\end{equation}
Next we apply Theorem~\ref{ch2:th:main1}:
since $\liminf\kappa > 0$ in this subregime,
by Remark~\ref{ch2:rem:simple} relation \eqref{ch2:eq:Vas>00simple}
holds, because
$|\log c(\kappa,t)| \gg |\log \kappa|$ by \eqref{eq:plugin}. Then we get
\begin{equation} \label{ch3:eq:inan}
	\sigma_\imp(\kappa,t) \sim \frac{\kappa}{\sqrt{2t\, (-\log  c(\kappa,t) )}}
	\sim \sqrt{\frac{ \sfc^{1/D}}{2 \sfC}} 
	\left(\frac{\frac{\kappa}{ \sfc^{1/D} \, t}}
	{\sqrt{\log \frac{\kappa}{ \sfc^{1/D} \, t}}}\right)^{\frac{1/2-D}{1-D}}  \,,
\end{equation}
which is precisely our goal \eqref{ch3:eq:smile}.

Next we consider subregime \eqref{it:ii}.
Again by Theorems~\ref{ch3:lemma:calltgamma}
and~\ref{ch3:th:tailprob}, relations \eqref{ch3:eq:cas0good}
and \eqref{ch3:eq:Cramer2} show that $-\log (c(\kappa,t)/\kappa)$
is asymptotically equivalent to the right hand side of 
\eqref{eq:plugin}. By Theorem~\ref{ch2:th:main1} we can apply
relation \eqref{ch2:eq:Vas<infty0}, which
by Remark~\ref{ch2:rem:simple} reduces to the first
line of \eqref{ch2:eq:Vas<infty0simple}. In analogy with
\eqref{ch3:eq:inan}, we obtain again our goal \eqref{ch3:eq:smile}.\qed

\subsection{Proof of Theorem~\ref{ch3:th:main}, part \eqref{iv:b}}

Extracting a subsequence, we
may consider a family of values of $(\kappa,t)$ with $t\to 0$ and
$\kappa \sim a \, \bkappa_2( \sfc^{1/D} \, t)$ for some $a \in (0,\infty)$,
and our goal is to prove \eqref{ch3:eq:smile2}.
By Theorems~\ref{ch3:lemma:calltgamma}
and~\ref{ch3:th:tailprob},
relations \eqref{ch3:eq:cas0good} and \eqref{ch3:eq:withfa} yield
\begin{equation*}
	\log \big( c(\kappa,t)/\kappa \big) 
	\sim \log \P(X_t > \kappa)\sim 
	- \sff( \tilde a) \log \frac{1}{ \lambda \, t} \,,
\end{equation*}
where $\tilde a$ is defined in \eqref{ch3:eq:withfa}.
By Theorem~\ref{ch2:th:main1} and Remark~\ref{ch2:rem:simple},
recalling the definition \eqref{ch3:eq:gh} of $\bkappa_1(\cdot)$,
relation \eqref{ch2:eq:Vas<infty0simple} gives
\begin{equation*}
 \sigma_\imp(\kappa,t) \sim 
 \frac{\kappa}{\sqrt{2t ( - \log ( c(\kappa,t) / \kappa ) )}} \sim
 \frac{\sqrt{\lambda}}{\sqrt{2 \, \sff( \tilde a)}} \, \frac{\kappa }
 {\bkappa_1(\lambda \, t)}\, ,
\end{equation*}
which proves our goal \eqref{ch3:eq:smile2}.\qed

\subsection{Proof of Theorem~\ref{ch3:th:main}, part \eqref{iv:c}}

Next we consider a family of values of $(\kappa,t)$ with $t\to 0$ and
$\sqrt{2D+1} \, \bkappa_1(\sigma_0^2 \, t) \leq \kappa \ll \bkappa_2(\sigma_0^2 \, t)$,
and our goal is to prove \eqref{ch3:eq:smile1}.
Plugging relation \eqref{ch3:eq:cas0bad2} from Theorem~\ref{ch3:lemma:calltgamma}
into the first line of relation \eqref{ch2:eq:Vas<infty0simple}
(recall Theorem~\ref{ch2:th:main1} and Remark~\ref{ch2:rem:simple}),
by the definition \eqref{ch3:eq:gh} of $\bkappa_1(\cdot)$ we obtain
\begin{equation*}
 \sigma_\imp(\kappa,t) \sim \frac{\kappa}{\sqrt{2t\, ( - \log ( c(\kappa,t) / \kappa ) )}}
	\sim \frac{ \sqrt{\lambda}}{\sqrt{2
	\Big(1- \frac{\log
	\left( \kappa / \bkappa_2(\sfc^{1/D}\,t) \right)}{\log
	(\lambda\,  t)} \Big) }}  \,
  \frac{\kappa}{\bkappa_1( \lambda \, t)}  \,,
\end{equation*}
proving our goal \eqref{ch3:eq:smile1}.\qed

\subsection{Proof of Theorem~\ref{ch3:th:main}, part \eqref{iv:d}}

Next we consider a family of values of $(\kappa,t)$ with $t\to 0$ and
$0 \leq \kappa \leq \sqrt{2D+1} \, \bkappa_1( \sigma_0^2 \, t)$, and our goal
is to prove \eqref{ch3:eq:smile0}, i.e.\ $\sigma_\imp(\kappa,t) \sim \sigma_0$.

First we consider the case of typical deviations, i.e.\ when
$\kappa \sim a \sqrt{ \sigma_0^2 \, t}$ for some $a\in[0,\infty)$.
In case $a > 0$, relation \eqref{ch3:eq:typ} from Theorem~\ref{ch3:lemma:calltgamma} gives
\begin{equation*}
	\frac{c(\kappa, t)}{\kappa} \to
 	D\left(a\right)
	\sim D\left(\frac{\kappa}{\sqrt{ \sigma_0^2 \, t}}\right) \,,
\end{equation*}
which plugged into relation \eqref{ch2:eq:Vas<infty0}
from Theorem~\ref{ch2:th:main1} yields our goal
$\sigma_\imp(\kappa,t) \sim \sigma_0$.
In case $a = 0$, i.e.\ if $\kappa = o(\sqrt{t})$,
relation \eqref{ch3:eq:typlast} from Theorem~\ref{ch3:lemma:calltgamma} gives
$c(\kappa, t) \sim \frac{\sigma_0}{\sqrt{2\pi}} \, \sqrt{t}$,
hence $c(\kappa,t)/\kappa \to \infty$. We can thus apply relation
\eqref{ch2:eq:Vas<infty0} from Theorem~\ref{ch2:th:main1},
in the simplified form given by the third line of \eqref{ch2:eq:Vas<infty0simple}
(recall Remark~\ref{ch2:rem:simple}), getting
our goal $\sigma_\imp(\kappa,t) \sim \sigma_0$.\footnote{If $\kappa = 0$ 
one should apply relation \eqref{ch2:eq:Vas00}, rather than \eqref{ch2:eq:Vas<infty0},
from Theorem~\ref{ch2:th:main1}, which however coincides with the
the third line of \eqref{ch2:eq:Vas<infty0simple}, so the conclusion is the same.}

Next we consider the case of atypical deviations, i.e.\ when 
$\kappa \gg \sqrt{ \sigma_0^2 \, t}$.
By Theorems~\ref{ch3:lemma:calltgamma}
and~\ref{ch3:th:tailprob},
relations \eqref{ch3:eq:cas0good} and \eqref{ch3:eq:withoutfa2} yield
\begin{equation*}
\log \big( c(\kappa,t)/\kappa \big) 
	\sim - \log \P(X_t > \kappa)\sim 
	- \frac{\kappa^2}{2\sigma_0^2 t} \,.
\end{equation*}
By Theorem~\ref{ch2:th:main1} and Remark~\ref{ch2:rem:simple},
since $\kappa \to 0$, the first line of relation \eqref{ch2:eq:Vas<infty0simple} gives
\begin{equation*}
	\sigma_\imp(\kappa,t) \sim \frac{\kappa}{\sqrt{2t\, ( - \log ( c(\kappa,t) / \kappa ) )}}
	\sim \sigma_0 \,,
\end{equation*}
proving our goal \eqref{ch3:eq:smile0}.
The proof of Theorem~\ref{ch3:th:main} is completed.\qed

\bigskip

\appendix

\section{Miscellanea}
\label{ch3:sec:app}

\subsection{Proof of relation \eqref{ch3:eq:V}}
\label{sec:avvol}

We recall that $(N_t)_{t\ge 0}$ denotes a Poisson process of intensity $\lambda$,
with jump times $\tau_1, \tau_2, \ldots$, while $\tau_0 \in (-\infty,0)$ is a fixed
parameter. The random variable $\tau_{N_t}$
represents the last jump time prior to $t$.

It is well-known that the random variable $t - \tau_{N_t}$,
conditionally on the event $\{N_t \ge 1\}$, is distributed like an exponential random
variable $Y \sim Exp(\lambda)$ conditionally on $\{Y \le t\}$. As a consequence,
the following equality in distribution holds:
\begin{equation*}
	(t-\tau_{N_t}) \overset{d}{=} Y \, \ind_{\{Y \le t\}} + (t + |\tau_0|) \, \ind_{\{Y > t\}} \,.
\end{equation*}
It follows easily that as $t \to \infty$ the random variable $t - \tau_{N_t}$
converges to $Y$ in distribution.
Moreover, for every $\alpha \in (0, 1)$ we have
\begin{equation*}
\begin{split}
	\E\bigg[ \frac{1}{(t-\tau_{N_t})^{\alpha}} \bigg] & = 
	\E\bigg[ \frac{1}{Y^\alpha} \,\bigg|\, Y \le t \bigg] 
	(1-e^{-\lambda t}) + \frac{1}{(t + |\tau_0|)^{\alpha}} \, e^{-\lambda t} \\
	& \,\xrightarrow[t\to\infty]{}\, \E\bigg[ \frac{1}{Y^\alpha} \bigg]
	= \int_0^\infty \frac{1}{y^\alpha} \lambda \, e^{-\lambda y} \, \dd y
	= \lambda^\alpha \, \Gamma(1-\alpha) \,.
\end{split}
\end{equation*}
Choosing $\alpha = 1-2D$ and recalling \eqref{ch3:eq:sigmat}, we obtain
$\lim_{t\to\infty} \E[\sigma_t^2] = V^2$, proving \eqref{ch3:eq:V}.\qed

\subsection{Martingale measures}
\label{ch3:sec:martingale}

Let $(Y_t)_{t\ge 0}$ be the martingale in
\eqref{ch3:eq:Ystart}, i.e.\ $\dd Y_t = \sigma_t \, \dd B_t$,
which represents the detrended log-price under the historical measure.
We recall that $(\sigma_t)_{t\ge 0}$ is the process defined in \eqref{ch3:eq:sigmat},
where $\tau_0 \in (-\infty,0)$ is a parameter and $(\tau_k)_{k\ge 1}$ 
are the jumps of a Poisson process $(N_t)_{t\ge 0}$ of intensity $\lambda$,
independent of the Brownian motion $(B_t)_{t\ge 0}$.

For $\tilde \lambda \in (0,\infty)$ and $T \in (0,\infty)$, define the equivalent probability measure
$\tilde\P_{\tilde \lambda, T}$ by
\begin{equation} \label{ch3:eq:mart1}
	\frac{\dd \tilde\P_{\tilde \lambda, T}}{\dd \P} :=
	e^{ -\int_0^T \frac{\sigma_s}{2} \, \dd B_s
	- \frac{1}{2} \int_0^T (\frac{\sigma_s}{2})^2 \, \dd s }
	\cdot e^{(\log \frac{\tilde \lambda}{\lambda} ) N_T
	- (\tilde \lambda - \lambda) T} =: R_1 \cdot R_2 \,.
\end{equation}
Note that $R_2$ is the Radon-Nikodym derivative (on the time interval $[0,T]$)
of the law of a Poisson process of intensity $\tilde \lambda$ with respect to
that of intensity $\lambda$.
Denoting by $\cG$ the $\sigma$-algebra generated by
$(N_t)_{t \in [0,T]}$, the volatility $(\sigma_t)_{t \in [0,T]}$ is a $\cG$-measurable process.
Conditionally on $\cG$, the trajectories $t \mapsto \sigma_t$
are thus \emph{deterministic}, 
hence the random variable $\int_0^T \frac{\sigma_s}{2} \, \dd B_s$ is Gaussian
with zero mean and variance $\int_0^T (\frac{\sigma_s}{2})^2 \dd s < \infty$
(by \eqref{ch3:eq:sigmat}, since $D < \frac{1}{2}$). Recalling the definition
\eqref{ch3:eq:mart1} of $R_1$, it follows immediately that $\E[R_1 | \cG] = 1$.
 
The previous observations show that \eqref{ch3:eq:mart1}
defines indeed a probability $\tilde\P_{\tilde \lambda, T}$,  since
\begin{equation*}
	\E[R_1 R_2] = \E[\E[R_1 | \cG] R_2] = \E[R_2] = 1 \,,
\end{equation*}
and $(N_t)_{t\in [0,T]}$ under $\tilde\P_{\tilde \lambda, T}$ is a Poisson process with
intensity $\tilde \lambda$.
Moreover, the process
\begin{equation} \label{ch3:eq:tildeB}
	\tilde B_t := B_t + \int_0^t \frac{\sigma_s}{2} \, \dd s \,, \qquad \text{i.e.} \qquad
	\dd \tilde B_t := \dd B_t + \frac{\sigma_t}{2} \, \dd t \,,
\end{equation}
is a Brownian motion under the conditional law
$\tilde\P_{\tilde \lambda, T}(\,\cdot\,|\cG)$, by Girsanov's theorem.
The fact that the distribution of $(\tilde B_t)_{t \in [0,T]}$
conditionally on $\cG$ does not depend on $\cG$ (it is the Wiener measure),
means that $(\tilde B_t)_{t \in [0,T]}$ is independent of $\cG$,
i.e.\ of $(N_t)_{t\in [0,T]}$.

Summarizing: under $\tilde\P_{\tilde \lambda,T}$ the process 
$(\tilde B_t)_{t \in [0,T]}$ in~\eqref{ch3:eq:tildeB} is a Brownian motion
and $(N_t)_{t\in [0,T]}$ is an independent Poisson process of
intensity $\tilde \lambda$.
Rewriting~\eqref{ch3:eq:Ystart} as
\begin{equation*}
	\dd Y_t = \sigma_t \, \dd \tilde B_t - \frac{1}{2} \sigma_t^2 \, \dd t \,,
\end{equation*}
by Ito's formula the process $(S_t := e^{Y_t})_{t \in [0,T]}$ solves the stochastic differential equation
\begin{equation}\label{ch4:eq:Stnew'}
	\dd S_t = S_t \, \dd Y_t + \frac{1}{2} S_t \, \dd \langle Y \rangle_t 
	= S_t \, \dd Y_t + \frac{1}{2} S_t \, \sigma_t^2 \, \dd t  
	= \sigma_t \, S_t \, \dd \tilde B_t \,.
\end{equation}
We have thus shown that under $\tilde\P_{\tilde \lambda,T}$ the price
$(S_t)_{t \in [0,T]}$ evolves according to \eqref{ch3:sdeprice}
(where the Brownian motion $\tilde B_t$ has been renamed $B_t$),
with the process $(\sigma_t)_{t\in [0,T]}$ still defined by \eqref{ch3:eq:sigmat},
except that the Poisson process $(N_t)_{t\in [0,T]}$ has now intensity $\tilde \lambda$.

\subsection{A minimization problem}
\label{ch3:sec:minimum}

Let us recall from \eqref{ch3:eq:f} the definition of $\sff: (0,\infty) \to \R$:
\begin{equation}\label{ch3:eq:fapp}
	\sff(a) := \min_{m \in \N_0} \sff_m(a) \,, \qquad
	\text{with} \qquad
	\sff_m(a) := 
	m + \frac{a^2}{ 2\, m^{1-2D}}  \,.
\end{equation}
We also recall that, since $D < \frac{1}{2}$, we can restrict the
minimum to $m\in\N = \{1,2,3,\ldots\}$.

For fixed $a \in (0,\infty)$, if we minimize $\sff_m(a)$ over
$m\in (0,\infty)$, rather than over $m\in\N$, the global minimum is
attained at the unique $\tilde m_a \in (0,\infty)$ with
$\frac{\partial}{\partial m} \sff_m(a) |_{m=\tilde m_a} = 0$, i.e.
\begin{equation*}
	\tilde m_a = \bigg( \sqrt{\tfrac{1}{2}-D} \, 
	 a \bigg)^{\frac{1}{1-D}} \,.
\end{equation*}
Since $m \mapsto \sff_m(a)$ is decreasing on $(0,\tilde m_a)$
and increasing on $(\tilde m_a, \infty)$, it follows that
\begin{equation} \label{ch3:eq:sffint}
	\sff(a) = \min \big\{ \sff_{\lfloor \tilde m_a \rfloor}(a),
	\sff_{\lceil \tilde m_a \rceil}(a) \big\} \,,
\end{equation}
where $\lfloor x \rfloor := \max\{k \in \Z: k \le x\}$ and 
$\lceil x \rceil := \min\{k \in \Z: k \ge x\}$ denote the lower and
upper integer part of $x$, respectively.
In particular, if $\tilde m_a = k \in \N$ is an integer, i.e.\ if
\begin{equation*}
	a = \hat a_k := \frac{ 1}{\sqrt{\frac{1}{2}-D}} k^{1-D} \,,
\end{equation*}
then $\sff(a) = \sff_k(a)$. Next we observe that for $a \in (\hat a_k, \hat a_{k+1})$ one has
$\tilde m_a \in (k, k+1)$, hence $\sff(a) = \min\{\sff_k(a), \sff_{k+1}(a)\}$ by \eqref{ch3:eq:sffint}.
By direct computation, one has
\begin{equation*}
	\sff_k(a) \le \sff_{k+1}(a) \qquad \iff \qquad
	a \le x_k := \frac{ 1}{\sqrt{\frac{1}{2k^{1-2D}}
	- \frac{1}{2(k+1)^{1-2D}}}}  \,.
\end{equation*}
(Note that $\hat a_k < x_k < \hat a_{k+1}$, by
convexity of $z \mapsto z^{-(1-2D)}$, and
$x_k \sim \hat a_k$ as $k \to \infty$.)
Setting $x_0 := 0$ for convenience,
the previous considerations show that
\begin{equation} \label{ch3:eq:sffex}
	\sff(a) = \sff_k(a) \qquad \text{for all} \ a \in [x_{k-1}, x_k)
	\text{ and } k\in\N \,.
\end{equation}
Since $\sff_k(x_k) = \sff_{k+1}(x_k)$ by construction, the function $\sff$ 
is continuous and strictly increasing (but it is \emph{not} convex, as one can check).
The asymptotics in \eqref{ch3:eq:asf} follow easily by \eqref{ch3:eq:sffex}
and \eqref{ch3:eq:fapp}, which yield $\sff(a) \sim \sff_{\tilde m_a}(a)$ as $a \to \infty$.

\section*{Acknowledgments}

We thank Fabio Bellini, Paolo Dai Pra and Carlo Sgarra for fruitful discussions.


\end{document}